\documentclass[a4paper,12pt]{amsart}
\usepackage[english]{babel}
\usepackage[T1]{fontenc}
\usepackage{times}
\usepackage{microtype}
\usepackage{color}
\usepackage[normalem]{ulem}

\usepackage{commath,amssymb,amscd,mathrsfs,mathtools,dsfont,bbm,multirow,array,caption,comment,pifont}
\usepackage[all]{xy}
\usepackage{enumitem}
\usepackage{longtable}

\usepackage[noadjust]{cite}		

\allowdisplaybreaks

\usepackage[dvipsnames,svgnames,table]{xcolor}
\definecolor{blue}{RGB}{255,25,25}
\definecolor{blue}{RGB}{25,50,200}
\usepackage{tikz-cd}
\usepackage[pagebackref,linktocpage]{hyperref}

\hypersetup{
pdftitle={},				
pdfauthor={},	
colorlinks=true,			
linkcolor=red,			
citecolor=MidnightBlue,	
filecolor=blue,		
urlcolor=MidnightBlue	
}

\usepackage{cleveref}	

\theoremstyle{plain}
\newtheorem{thm}{Theorem}[section]
\newtheorem{theorem}[thm]{Theorem}

\newtheorem{lemma}[thm]{Lemma}
\newtheorem{corollary}[thm]{Corollary}
\newtheorem{proposition}[thm]{Proposition}
\theoremstyle{definition}
\newtheorem{remark}[thm]{Remark}

\newtheorem{definition}[thm]{Definition}

\newtheorem{example}[thm]{Example}

\numberwithin{equation}{section}

\newcommand{\p}{\partial}

\newcommand{\sA}{{\mathcal A}}

\newcommand{\sL}{{\mathcal L}}
\newcommand{\sM}{{\mathcal M}}

\newcommand{\sO}{{\mathcal O}}

\newcommand{\sV}{{\mathcal V}}

\newcommand{\C}{{\mathbb C}}

\newcommand{\BL}{{\mathbb L}}

\newcommand{\BP}{{\mathbb P}}

\newcommand{\R}{{\mathbb R}}

\newcommand{\W}{{\mathbb W}}

\newcommand{\Z}{{\mathbb Z}}

\newcommand{\h}{\widehat}
\newcommand{\w}{\widetilde}

\newcommand\Aut{\rm Aut}

\def\Jac{\mathop{\rm Jac}\nolimits}

\title[Holomorphic symplectic geometry of elliptic surfaces]{Holomorphic symplectic geometry of \\ elliptic surfaces}

\author{Jun-Muk Hwang and Guolei Zhong}

\thanks{This work was supported by the Institute for Basic Science (IBS-R032-D1). The second author was also supported by National Natural Science Foundation of China, and Science and Technology Commission of Shanghai Municipality (No. 22DZ2229014).}

\begin{document}

\begin{abstract}
 When a complex surface $X$ admits a nowhere vanishing holomorphic 2-form, it determines a (holomorphic)  symplectic structure on $X$. We study the symplectic geometry of such a symplectic structure when $X$ is an elliptic surface. When the elliptic fibration is nonisotrivial, we define a factorization of Kodaira's functional invariant, called the symplecto-functional invariant and prove that the symplecto-functional invariant determines the symplectic geometry of a nonisotrivial elliptic fibration. This leads to a classification of isogenies of nonisotrivial symplectic elliptic fibrations with a fixed source. We also classify isogenies of symplectic elliptic fibrations with a fixed target by studying symplectic automorphisms of germs of singular fibers.
As an application, we prove that a symplecto-biholomorphic map between germs of fibers of nonisotrivial elliptic K3 surfaces can be extended to compositions of isogenies of K3 surfaces.
\end{abstract}

\maketitle

\medskip
MSC2020:  14J27, 14J28, 53D05

\medskip
Key words: Elliptic fibration, K3 surface, Holomorphic Lagrangian fibration

{\bf Conventions}
\begin{itemize}
\item
We work in the complex-analytic category: all varieties are defined over the complex numbers, and all maps, bundles, and sections are holomorphic.
In particular, a symplectic form means a everywhere-nondegenerate holomorphic 2-form.
\item
An open subset of a complex manifold refers to its  Euclidean topology, whereas a Zariski-open subset means the complement of a closed analytic subset.
\item Let $F\colon C \to \C$ be a holomorphic function on a Riemann surface $C$.
The vanishing order of $F$ at a point $t \in C$ is denoted by ${\rm ord}_t F$.
For a holomorphic map $G\colon C \to C'$ between Riemann surfaces, we denote by ${\rm mult}_t G$ the local degree of $G$ at $t \in C$,
that is, if $z$ is a local coordinate on $C'$ centered at $G(t)$, then ${\rm mult}_t G = {\rm ord}_t(z \circ G)$.
We set ${\rm ord}_t F = \infty$ (resp. ${\rm mult}_t G = \infty$) if $F \equiv 0$ (resp. $G$ is a constant map).
\end{itemize}

\section{Introduction}
Let \(X\) be a (not necessarily compact) complex surface whose canonical bundle \(K_X\) admits a nowhere vanishing holomorphic section \(\omega \in H^0(X, K_X)\).
The section \(\omega\) defines a holomorphic symplectic form on \(X\).
In this paper, we study the symplectic geometry of such surfaces when there is an elliptic fibration \(f\colon X \to C\) over a Riemann surface \(C\), and we explore how the symplectic structure interacts with the underlying complex geometry.
Naturally, the most compelling case is when \(X\) is compact, particularly when it is an elliptic K3 surface. Nonetheless, the problem remains interesting on noncompact surfaces as well.

Let us start with some basic terminology.

\begin{definition}\label{d.basic}
Let $f\colon  X \to C$ be a proper surjective holomorphic map from a two-dimensional complex manifold $X$ onto a Riemann surface $C$.
\begin{itemize}
\item[(i)] We say that $f$ is {\em relatively minimal}, if there exists no $(-1)$-curve  in a fiber of $f$.
\item[(ii)]  We say that $f$ has {\em no multiple fiber}, if each  fiber of $f$ has a nonsingular point.
\item[(iii)] We say that $f$ is a {\em genus 1 fibration} if smooth fibers of $f$ have genus 1.
\item[(iv)]   In (iii), if   a section $\Sigma \subset X$ of $f$  is given, we call the pair $(f\colon  X \to C, \Sigma)$  an {\em elliptic fibration}. (Here we follow the convention of \cite{SS}: it is called an elliptic fibration with a section in \cite{FM} and \cite{Mi}.)
\item[(v)]  In (iii) (resp. (iv)), if a symplectic form $\omega$ on $X$ is given, we call the pair $(f\colon X \to C, \omega)$ (resp. the triple $(f\colon  X \to C, \Sigma, \omega)$)  {\em a symplectic genus 1 fibration} (resp. {\em a symplectic elliptic fibration}).
\item[(vi)]   For an unramified holomorphic map $\phi\colon U \to C$, the $\phi$-{\em pullback} $\phi^*(f\colon  X \to C)$  is the natural genus 1 fibration over $U$   obtained by the fiber product of $f$ and $\phi$. If $\Sigma \subset X$ is given as in (iv) (resp.  $\omega$ is given as in (v),   resp. $\Sigma \subset X$ and $\omega$ are given as in (iv) and (v)), it induces a natural elliptic fibration $\phi^*(f\colon  X \to C, \Sigma)$ (resp. a natural symplectic genus 1 fibration $\phi^*(f\colon  X \to C, \omega)$,  resp. a natural symplectic elliptic fibration $\phi^*(f\colon  X \to C, \Sigma,\omega)$).
\item[(vii)] In (vi), if $\phi$ is an inclusion of an open subset $O \subset C$, we write $$\phi^*(-) = (-)|_O,$$
where \((-)\)  stands for \((f\colon  X \to C)\), \((f\colon  X \to C,\Sigma)\), \((f\colon  X \to C,\omega)\), or \((f\colon  X \to C,\Sigma,\omega)\), respectively.
\end{itemize}
  \end{definition}

We have the following notion of isomorphisms.

\begin{definition}\label{d.isom}
Let $(f\colon  X \to C)$  and $(\w{f}\colon  \w{X} \to \w{C})$ be two genus 1 fibrations.
\begin{itemize}
\item[(i)]  A pair of biholomorphic maps $\phi\colon \w{C} \to C$ and $\Phi\colon \w{X} \to X$ is an {\em isomorphism of the two genus 1 fibrations} if $\phi \circ \w{f} = f \circ \Phi$. When \(C=\w{C}\) and \(\phi={\rm Id}_C\), we say that \(\Phi\) is an {\em isomorphism of the two genus 1 fibrations over \(C\)}.
\item[(ii)] Assume that sections $\Sigma\subset X$  and $\w{\Sigma}\subset\w{X}$ are given.
Then a pair $(\phi, \Phi)$ in (i) is an {\em isomorphism of the two elliptic fibrations} if  $ \Phi(\w{\Sigma}) = \Sigma$.
\[\xymatrix{
\w{\Sigma}\ar@{^(->}[r]\ar[d]_{\Phi|_{\Sigma}}&\w{X}\ar[d]_{\Phi}\ar[r]^{\w{f}}&\w{C}\ar[d]_{\phi}\\
\Sigma\ar@{^(->}[r]&X\ar[r]^f&C.
}\]
\item[(iii)] Assume that symplectic forms $\omega$ on $X$ and $\w{\omega}$ on $\w{X}$ are  given.  Then a pair $(\phi, \Phi)$ in (i) (resp. in (ii)) is an {\em isomorphism of the two symplectic genus 1 fibrations} (resp. {\em isomorphism of the two symplectic elliptic fibrations}) if $\Phi^* \omega = \w{\omega}.$ \end{itemize} 
\end{definition}

 Let \(\overline{\sM}=\overline{\mathcal{M}}_1\) denote the compactified moduli space of genus 1 curves.
We identify \(\overline{\mathcal{M}}\) with the projective line \(\mathbb{P}^1 = \mathbb{C} \cup \{\infty\}\) via the normalized modular invariant \(\frac{1}{1728} \cdot \jmath\), where \(\jmath\) denotes the classical elliptic modular function.

\begin{definition}\label{d.functional}
Let $f\colon  X \to C$ be a genus 1 fibration.
There are two invariants associated to $f\colon  X \to C$ (see \cite[Section 7]{Ko}, \cite[pages 202, 211]{BHPV}):
\begin{itemize}
\item[(i)] {\em the functional invariant}, a holomorphic map $J\colon  C  \to \overline{\sM}= \BP^1$ given by the moduli of fibers of $f$; and
\item[(ii)] the {\em homological invariant}, a group homomorphism $$ \pi_1( C^{\rm reg}, c_0) \to {\Aut}(H_1(f^{-1}(c_0), \Z)) \cong {\rm SL}_2(\Z)$$ given by the monodromy of the smooth  fibration $f^{-1}(C^{\rm reg}) \to C^{\rm reg}$ at a base point $c_0 \in C^{\rm reg}$, where $C^{\rm reg} \subset C$ is the Zariski-open subset of regular values of $f$.
\end{itemize}
We say that $f$ is {\em isotrivial} (resp. {\em nonisotrivial}) if $J$ is a constant (resp. nonconstant) map.
\end{definition}

Recall that associated to a genus 1 fibration $f\colon X \to C$ without multiple fibers is its Jacobian fibration
$\Jac(f)\colon  \Jac (X) \to C$, which is a relatively minimal elliptic fibration, with the same functional and homological invariants as $f\colon  X \to C$ (see \cite[Section V.9]{BHPV}).

Our first result compares the symplectic geometry of $f\colon  X \to C$ with that of $\Jac(f)\colon  \Jac(X) \to C$  as follows.

\begin{theorem}\label{t.Jac}
Let $f\colon  X \to C$ be a genus 1 fibration with no multiple fibers and let $\Jac(f)\colon  \Jac(X) \to C$ be its Jacobian fibration.
\begin{itemize}
\item[(i)]
If there exists a symplectic form $\omega$ on $X$, then there exists a symplectic form $\Jac(\omega)$ on $\Jac(X)$ such that each $y \in C$ admits a neighborhood $y \in O \subset C$ with a biholomorphic map $\varphi\colon  f^{-1}(O) \to \Jac(f)^{-1}(O)$,  which is an isomorphism of the two symplectic genus 1 fibrations $(f \colon X \to C, \omega)|_O$ and $(\Jac(f) \colon \Jac(X) \to C, \Jac(\omega))|_O$.
\item[(ii)] Conversely, if there exists a symplectic form $\Jac(\omega)$ on $\Jac(X)$, then there exists a symplectic form $\omega$ on $X$ such that each $y \in C$ admits a neighborhood $y \in O \subset C$ with a biholomorphic map $\varphi\colon  f^{-1}(O) \to \Jac(f)^{-1}(O)$, which is an isomorphism of the two symplectic genus 1 fibrations $(f \colon X \to C, \omega)|_O$ and $(\Jac(f) \colon \Jac(X) \to C, \Jac(\omega))|_O$.
\end{itemize} \end{theorem}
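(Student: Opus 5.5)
The plan is to use the standard local structure of a genus 1 fibration without multiple fibers. Since every fiber has a nonsingular point, each $y\in C$ has a neighborhood $O$ over which $f$ admits a holomorphic local section $\sigma_O$. By the construction of the Jacobian fibration in \cite[Section V.9]{BHPV}, the pair $(f|_{f^{-1}(O)},\sigma_O)$ is then isomorphic over $O$ to $\Jac(f)|_{\Jac(f)^{-1}(O)}$, with $\sigma_O$ going to the zero section. Write $\varphi_O\colon f^{-1}(O)\to \Jac(f)^{-1}(O)$ for this isomorphism. On an overlap $O\cap O'$ the maps $\varphi_{O'}\circ\varphi_O^{-1}$ are fiberwise translations $t_s$ by a section $s$ of the sheaf of local sections of $\Jac(f)$. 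Indeed $X$ is the torsor obtained by regluing $\Jac(X)$ along such translations, and this cocycle describes $X$ as a twisted form of $\Jac(X)$.

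The key lemma I would prove first is that \emph{fiberwise translation by a local holomorphic section preserves any symplectic form}. More precisely, let $\eta$ be a symplectic form on $\Jac(f)^{-1}(O)$ and $s$ a section over $O$. Then $t_s^*\eta=\eta$.

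On the smooth part, the argument is as follows. Write $\eta$ locally as $\eta = dz\wedge \theta$, where $z$ is a coordinate on $O$ and $\theta$ is a relative 1-form restricting to a nonzero holomorphic 1-form on each fiber. A nonzero holomorphic 1-form on an elliptic curve is translation invariant. The pullback of $dz$ is unchanged because $t_s$ commutes with the projection. Hence $t_s^*\eta = dz\wedge t_s^*\theta$. Now $t_s^*\theta-\theta$ restricts to zero on each fiber, so it is a multiple of $dz$, and it drops out after wedging with $dz$. Therefore $t_s^*\eta=\eta$ over $O^{\rm reg}$.

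Since both sides are holomorphic 2-forms on the smooth surface $\Jac(f)^{-1}(O)$ and agree on a dense open set, they agree everywhere. This requires that $t_s$ is defined and biholomorphic on all of $\Jac(f)^{-1}(O)$. That holds because the group-law action of the smooth locus $\Jac(X)^\#$ on $\Jac(X)$ extends across the singular fibers for relatively minimal elliptic fibrations (Kodaira's structure theory), and a local section lands in $\Jac(X)^\#$.

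For (i), define $\Jac(\omega)$ on $\Jac(f)^{-1}(O)$ as $(\varphi_O^{-1})^*\omega$. On overlaps two such definitions differ by pullback under a translation $t_s$. To make them agree, I transport the key lemma through the local isomorphism: apply it to $\eta=(\varphi_O^{-1})^*\omega$ on $\Jac(f)^{-1}(O\cap O')$. Since $\varphi_{O'}\circ\varphi_O^{-1}=t_s$, we get $(\varphi_{O'}^{-1})^*\omega = (t_s^{-1})^*\eta=\eta$. So the local forms glue to a global nowhere vanishing form $\Jac(\omega)$, and each $\varphi_O$ is symplectic by construction. Part (ii) is the same argument run in the opposite direction: define $\omega|_{f^{-1}(O)}:=\varphi_O^*\Jac(\omega)$ and use the lemma to show these agree on overlaps.

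The main obstacle I expect is the extension of translations across singular fibers. One must justify that the translation $t_s$ is a biholomorphism of the whole $\Jac(f)^{-1}(O)$, including the non-identity components, and that the gluing $\varphi_{O'}\circ\varphi_O^{-1}$ is globally such a translation rather than something defined only on $O^{\rm reg}$. I would handle this with two ingredients. First, relative minimality lets any fiber-preserving bimeromorphic self-map over $O$ extend biholomorphically, by uniqueness of relatively minimal models. Second, Hartogs-type extension lets the identity $t_s^*\eta=\eta$ extend across the curves. If needed, one can alternatively check directly that $\eta$ is unique up to multiplication by a nowhere zero function on $O$. Then $t_s^*\eta=g\eta$ for some $g$, and $g=1$ over $O^{\rm reg}$ forces $g\equiv1$.
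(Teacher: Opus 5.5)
Your proof is correct, and its overall structure matches the paper's. You identify $X$ with $\Jac(X)$ over small opens via local sections, which exist because there are no multiple fibers. You note that the transition maps are translations by local sections of $\Jac(f)$, which is the Tate--Shafarevich cocycle. You then glue, once translations are known to preserve symplectic forms.

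\textbf{The key lemma.} The genuine difference is how you prove that translations preserve symplectic forms. The paper obtains it (Lemma \ref{l.trans}) from the holomorphic action--angle theorem $\chi^*\omega=\omega^C$ (Theorem \ref{t.AA}), together with the invariance of ${\rm d}z\wedge{\rm d}w$ under $w\mapsto w+g(z)$ (Lemma \ref{l.standard}). You instead contract $\eta$ with a lift of $\partial/\partial z$. This gives a well-defined nowhere vanishing holomorphic 1-form on each smooth fiber, and such forms on an elliptic curve are translation invariant.

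Your argument is more elementary and self-contained. It does not need the transfer of \cite[Theorem 44.2]{GS} to the holomorphic category, and it applies to any fiber-preserving map over ${\rm Id}_C$ that acts on smooth fibers by translations. The paper's route costs nothing extra in context, because $\chi$ is exactly the machinery that later defines the symplecto-functional invariant.

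\textbf{Singular fibers.} The paper avoids them altogether by choosing the cover with $U_i\cap U_j\subset C^{\rm reg}$ for $i\neq j$. In your setup the anticipated obstacle also disappears, without Kodaira's extension of the group action. The map $\varphi_{O'}\circ\varphi_O^{-1}$ is already a biholomorphism of all of $\Jac(f)^{-1}(O\cap O')$. Two holomorphic 2-forms that agree off finitely many fibers agree everywhere by continuity, so no Hartogs argument is needed.

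\textbf{Two small remarks.}
\begin{itemize}
\item The transition maps are pure translations only because each $\varphi_O$ is the canonical identification, given on smooth fibers by $p\mapsto\sO_{X_y}(p-\sigma_O(y))$. This matters: a transition involving $[-1]$ would pull $\eta$ back to $-\eta$, and the forms would not glue.
\item Like the paper, you tacitly use that $f$ is relatively minimal. This is automatic in (i), because $K_X$ is trivial. It has to be assumed in (ii).
\end{itemize}
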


Theorem \ref{t.Jac} is rather easy to prove, and it gives alternative proofs of results like  \cite[Proposition 5.4, Chapter 11]{Hu}.
Theorem \ref{t.Jac} suggests that to study the symplectic geometry of genus 1 fibrations,  we may concentrate on the case of elliptic fibrations.
We refer the reader to \cite[Theorem 3.6]{AR25} for a related result in the higher-dimensional case.

In our study of symplectic elliptic fibrations,
the following notion is crucial.
It is motivated by isogenies of K3 surfaces in \cite[Section 12.2.5]{SS}.

\begin{definition}\label{d.isogeny}
Let $(f\colon  X \to C, \Sigma, \omega)$ be a symplectic elliptic fibration.
An {\em isogeny} from a symplectic elliptic fibration $(\w{f}\colon  \w{X} \to \w{C}, \w{\Sigma}, \w{\omega})$  to  $(f\colon  X \to C, \Sigma, \omega)$ is a pair $(\psi,  \Psi)$ of
  \begin{itemize} \item[(i)] a surjective holomorphic map $\psi\colon  \w{C} \to C$, and
  \item[(ii)] a dominant meromorphic map $\Psi\colon \w{X} \dasharrow X$, \end{itemize}  such that
there exists a nonempty Zariski-open subset $U \subset \w{C}$ satisfying the following conditions:
\begin{itemize} \item[(a)] $\psi$ is unramified on $U$; \item[(b)]  the meromorphic map $\Psi$ is holomorphic on $\w{f}^{-1}(U)$;  and \item[(c)]  the holomorphic map $\Psi|_{\w{f}^{-1}(U)}$ in (b)  induces an isomorphism of symplectic elliptic fibrations $$ (\w{f}\colon  \w{X} \to  \w{C}, \w{\Sigma}, \w{\omega})|_U \stackrel{\cong}{\rightarrow} (\psi|_U)^*(f\colon  X \to C, \Sigma, \omega).$$ \end{itemize}  \end{definition}

The following is a typical example on the isogeny of symplectic elliptic fibrations.
\begin{example}[Shioda]\label{ex:Shi1}
Let \(X\) be an elliptic K3 surface over $P^1_T$, the projective line with an inhomogeneous coordinate $T$,  with two singular fibers of type \(\textup{II}^*\).
By \cite[Proposition 2.1]{Shio}, such \(X\) has a  Weierstrass equation 
\[
X_{\alpha,\beta}\colon y^2=x^3-3\alpha x+(T+\frac{1}{T}-2\beta).
\]
Let \(S\) be a Kummer surface admitting an elliptic fibration \(S\to\mathbb{P}_t^1\) which has two singular fibers of type \(\textup{IV}
^*\) and has a defining equation (cf. \cite[Proposition 3.1]{Shio})
\[
y^2=x^3-3\alpha x+(t^2+\frac{1}{t^2}-2\beta).
\]
Then the rational map \(\Phi\colon S\dashrightarrow X\) sending \((x,y,t)\) to \((x,y,t^2)\) defines an isogeny with respect to the symplectic elliptic fibrations \(S\to\mathbb{P}_t^1\) and \(X\to\mathbb{P}_T^1\). 
\end{example}

Our second main result gives a sort of a classification of isogenies with a given source when the elliptic fibration is nonisotrivial.

\begin{theorem}[{see Proposition \ref{p.reduction}}]\label{t.factorize}
Let $(f\colon  X \to C, \Sigma, \omega)$ be a nonisotrivial symplectic elliptic fibration. Then there exists an isogeny $(\xi\colon  C \to \h{C}, \Xi\colon  X \dasharrow \h{X})$ from $(f\colon  X \to C, \Sigma, \omega)$ to a symplectic elliptic fibration $(\h{f}\colon  \h{X} \to \h{C}, \h{\Sigma}, \h{\omega})$ with the following universal property: for any isogeny $(\psi\colon  C \to C', \Psi\colon X \dasharrow X')$ from $(f\colon  X \to C, \Sigma, \omega)$ to a symplectic elliptic fibration $(f'\colon  X' \to C', \Sigma', \omega')$,  there exists a unique isogeny $(\xi'\colon  C' \to \h{C}, \Xi'\colon  X' \dasharrow \h{X})$ from $(f'\colon  X' \to C', \Sigma', \omega')$ to $(\h{f}\colon  \h{X} \to \h{C}, \h{\Sigma}, \h{\omega})$ such that $$\xi= \xi' \circ \psi \ \mbox{ and } \ \Xi = \Xi' \circ \Psi.$$ In particular,  the symplectic elliptic fibration $(\h{f}\colon  \h{X} \to \h{C}, \h{\Sigma}, \h{\omega})$ and the isogeny $(\xi, \Xi)$ are uniquely determined by $(f\colon  X \to C, \Sigma, \omega)$.  \end{theorem}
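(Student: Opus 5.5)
We plan to encode a symplectic elliptic fibration locally by a pair of data on the base: the functional invariant $J\colon C\to\BP^1$ and a holomorphic ``period'' datum coming from $\omega$. On the regular locus $C^{\rm reg}$, integrating $\omega$ over vertical cycles gives a multivalued holomorphic 1-form on $C$, or equivalently a holomorphic map to the space of lattices in $\C$. Concretely, if $\gamma_1,\gamma_2$ is a local basis of $H_1$ of the fibers, then $\eta_i=\int_{\gamma_i}\omega$ are holomorphic 1-forms on the base, well defined up to ${\rm SL}_2(\Z)$. Their primitives $(P_1,P_2)$, taken up to an additive constant, form a local map to $\C^2$ modulo ${\rm SL}_2(\Z)\ltimes\C^2$. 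The first step is to show that this local map is a complete invariant. Two symplectic elliptic fibrations over a disk in $C^{\rm reg}$ are isomorphic over $\phi$ exactly when these period data agree under $\phi$. This is the analogue of action--angle coordinates: $X$ is locally $T^*$ of the base divided by the lattice spanned by $\eta_1,\eta_2$, with the section as zero section.

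The second step uses nonisotriviality. Since $J$ is nonconstant, the ratio $\eta_1/\eta_2$ (the period $\tau$) is a nonconstant function, so the period map is locally an immersion away from a discrete set. Consider the ``symplecto-period'' map
\[
\Pi\colon \w{C^{\rm reg}}\to \C^2,
\]
defined on the universal cover, and take the quotient of $C^{\rm reg}$ by the equivalence relation identifying points $c,c'$ whenever some germ of local biholomorphism carries the period datum at $c$ to that at $c'$. Equivalently, we look at the monodromy/analytic continuation group acting on the image. We define $\h{C}$ as the Riemann surface obtained as the maximal factorization: the normalization of the image of $C$ under the map $c\mapsto$ (germ of the period datum modulo the action of ${\rm SL}_2(\Z)\ltimes\C^2$). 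Nonisotriviality ensures these germs are discrete data, because the $\tau$-coordinate is nonconstant. This makes the resulting space a one-dimensional analytic space, through which $J$ factors; this is the symplecto-functional invariant. Over $\h{C}^{\rm reg}$ we build $\h{X}$ by gluing the local models $T^*/\text{lattice}$, and then extend across the remaining points by the Kodaira relatively minimal completion (the Weierstrass model). The isogeny $(\xi,\Xi)$ comes out of the construction.

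The universal property then follows. Any isogeny $(\psi,\Psi)$ preserves the period datum on a Zariski-open set, by the first step. Hence the map $C\to\h{C}$ is constant on fibers of $\psi$, and it descends to $\xi'\colon C'\to\h{C}$ because $\psi$ is surjective and $C'$ is normal. The map $\Xi'$ is defined over the unramified locus by the local uniqueness of isomorphisms, and then extended meromorphically. Uniqueness holds because symplectic isomorphisms of elliptic fibrations preserving the section are rigid up to finitely many fiberwise automorphisms. Preserving $\omega$ together with the nonconstant period kills these, apart from $-1$, and $-1$ is fixed by the requirement $\Xi=\Xi'\circ\Psi$.

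The main obstacle, I expect, is showing that the quotient $\h{C}$ is a Hausdorff Riemann surface and that $\xi$ is a genuine holomorphic map. Concretely, this means the equivalence classes are discrete and the relation is closed, including near singular fibers and the points where $J$ ramifies. I would first try to realize $\h{C}$ via an explicit invariant: the pair $(J,\ \text{suitably normalized } \eta)$, for instance the 1-form $\eta_2$ expressed in terms of $dJ$ and modular forms of weight 2 in $\tau$. This should give a single-valued meromorphic map to a fixed curve, whose Stein factorization yields $\h{C}$.
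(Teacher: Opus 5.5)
\textbf{Overall approach.} Your architecture is essentially the paper's. The lattice spanned by $\eta_1(y),\eta_2(y)$ in $T^*_yC$ is, up to sign, the lattice $\chi_y^{-1}(\Sigma\cap X_y)$ of the action--angle map. Transporting it by ${\rm d}J$ gives the symplecto-functional invariant $\lambda\colon C^o\to\sL$. Your germ-equivalence is the $\lambda$-equivalence of Definition \ref{d.hB}, and your universal-property argument is Proposition \ref{p.invfactor} combined with Theorem \ref{t.lambda*}. The Hausdorff issue you single out as the main obstacle is actually easy. Near any $y$, $J$ is a $k$-cyclic cover of a disc, and $\lambda$ factors it on the punctured disc into unramified covers. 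Hence $\xi$ is locally an $m$-cyclic quotient (Proposition \ref{p.hB}).

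\textbf{The gap.} The real gap is the sentence ``extend across the remaining points by the Kodaira relatively minimal completion.'' Kodaira's completion exists because $\h{j}$ is holomorphic, but it produces only an \emph{elliptic} fibration $\h{X}_{\h{U}}$ over a disc $\h{U}$ around $z\in\h{C}\setminus\h{C}^{\flat}$. On it, the glued form is $\h{\omega}=g(s)\,\omega_0$, where $\omega_0$ is a nowhere-vanishing 2-form on $\h{X}_{\h{U}}$ and $g$ is merely a unit on the punctured disc. For $(\xi,\Xi)$ to be an isogeny to a \emph{symplectic} elliptic fibration, you must show that $g$ extends to a unit at $z$. Your proposal never addresses this, and it is exactly what the paper spends Proposition \ref{p.descent} on.

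\textbf{How the paper closes it.} Suppose $\xi$ is $m$-cyclic near $y\in\xi^{-1}(z)$ with deck transformation $v$. Then $\lambda\circ v=\lambda$, so by Theorem \ref{t.lambda*} $v$ lifts to an automorphism $\sigma$ of $f^{-1}(U)$ with $\sigma(\Sigma)=\Sigma$ and $\sigma^*\omega=\omega$. Because $\sigma$ preserves $\omega$, the form descends to the quotient, and the minimal resolution of the quotient is symplectic by Beauville's results (Proposition \ref{p.Beauville}).

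\textbf{Alternative fix within your route.} You can also finish with an order count.
\begin{itemize}
\item Write $\h{X}_{\h{U}}$ in minimal Weierstrass form $y^2=x^3+a(s)x+b(s)$ with $\omega_0=\frac{{\rm d}x\wedge{\rm d}s}{y}$.
\item Pulling back by $s=t^m$ and minimalizing requires $e$ twists $(x,y)=(t^{2e}x',t^{3e}y')$. Here $e\le m-1$, because ${\rm ord}_s a\le 3$ or ${\rm ord}_s b\le 5$.
\item By uniqueness of minimal Weierstrass models, $\omega=(\text{unit})\cdot\frac{{\rm d}x'\wedge{\rm d}t}{y'}$ on $f^{-1}(U)$.
\item Comparing the two expressions gives $g(t^m)=(\text{unit})\cdot t^{\,e+1-m}$.
\item The left side is a function of $t^m$, so $m\mid e+1$. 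This forces $e=m-1$, and $g$ is a unit at $z$.
\end{itemize}

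\textbf{Minor points.}
\begin{itemize}
\item ``Stein factorization to a fixed curve'' is not available for noncompact $C$: $\sL$ is a surface and $\lambda$ is not proper. This is why the paper uses germ-equivalence instead.
\item Fiberwise $-1$ sends $\omega$ to $-\omega$, so it is not a symplectic automorphism at all.
\item Uniqueness of $(\xi',\Xi')$ is immediate from the surjectivity of $\psi$ and the dominance of $\Psi$.
\end{itemize}
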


\begin{remark}\label{r.Shioda}
In \cite[Theorem 1.1]{Shio}, besides Example \ref{ex:Shi1}, Shioda also constructed another isogeny $\Psi\colon X \dasharrow S$ which is also of degree 2.
By Theorem \ref{t.factorize}, the two maps $\Psi$ and $\Phi$ are isogenies with respect to different elliptic fibration structures of the Kummer surface  $S$.
\end{remark}

\begin{definition}\label{d.reduction}
The symplectic elliptic fibration $(\h{f}\colon  \h{X} \to \h{C}, \h{\Sigma}, \h{\omega})$ in Theorem \ref{t.factorize} is called the {\em symplecto-elliptic reduction} of $(f\colon  X \to C, \Sigma, \omega)$. We say that $(f\colon  X \to C, \Sigma, \omega)$ is {\em symplecto-elliptically reduced} if it is its own symplecto-elliptic reduction, namely, both $\xi$ and $\Xi$ are identity maps. 
\end{definition}

We give a few examples below to explain Definition \ref{d.reduction}.
We refer the reader to Example \ref{ex.Og} for a more interesting example on the symplecto-elliptic reduction.
\begin{example}\label{ex.reduced}
\begin{enumerate}
\item  If the functional
invariant of a non-isotrivial elliptic K3 surface \(X\to C\) does not allow nontrivial factorizations,
for example, if the functional invariant is of degree one, then \(X\to C\) is symplecto-elliptically reduced. 
\item By \cite[Table A, \(\mathscr{J}_5\)]{Og}, there exists a Kummer surface \(X\) with a Jacobian fibration \(X\to C\) whose singular fibers consist of one  \(\textup{I}_6^*\)-fiber and six \(\textup{I}_2\)-fibers.
Then \(X\to C\) is symplecto-elliptically reduced (see Theorem \ref{t.auto}).
\end{enumerate}
\end{example}

The proof of Theorem \ref{t.factorize}
 needs a new notion called the {\em symplecto-functional invariant} (see Definition \ref{d.invariant}).
Briefly speaking, the symplecto-functional invariant of a nonisotrivial symplectic elliptic fibration $(f\colon  X \to C, \Sigma, \omega)$ is a holomorphic map $\lambda\colon  C^o \to \sL$  from a Zariski-open subset $C^o \subset C$ (determined by $f$) to a natural $\C^*$-bundle $\varpi\colon  \sL \to \sM^{\star} \coloneqq \overline{\sM} \setminus \{ 0, 1, \infty \},$ which gives
 a factorization of  the functional invariant $J|_{C^o}\colon  C^o\subset C \to \overline{\sM}$:
\[
\xymatrix{
C\ar[d]^J&C^o\ar[dr]^{\lambda}\ar@{_(->}[l]&\\
\overline{\sM}&\sM^{\star}\ar@{_(->}[l]&\sL.\ar[l]_\varpi
}
\]

It turns out that the symplecto-functional invariant $\lambda$ determines the symplectic geometry of $(f\colon  X \to C, \Sigma, \omega)$ in the following sense.

\begin{theorem}[{cf.~Theorem \ref{t.lambda*}}]\label{t.lambda}
Let $(f\colon  X \to C, \Sigma, \omega)$ and $(\w{f}\colon  \w{X} \to \w{C}, \w{\Sigma}, \w{\omega})$ be two nonisotrivial symplectic elliptic fibrations.  Let $\lambda\colon  C^o \to \sL$ and $\w{\lambda}\colon  \w{C}^o \to \sL$ be their symplecto-functional invariants.  Then the following two statements are equivalent. \begin{itemize}
\item[(a)] There exists  a biholomorphic map $\phi\colon C \to \w{C}$ satisfying $\phi(C^o) = \w{C}^o $ and $ \lambda = \w{\lambda} \circ \phi|_{C^o}.$
     \item[(b)] There exists an isomorphism  between the symplectic elliptic fibrations  $(f\colon  X \to C, \Sigma, \omega)$ and $(\w{f}\colon  \w{X} \to \w{C}, \w{\Sigma}, \w{\omega})$.
     \end{itemize}
     In fact, the isomorphism in (b) can be chosen such that the corresponding biholomorphic map from $C$ to $\w{C}$ is $\phi$ in (a).
     \end{theorem}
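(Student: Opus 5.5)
We have to guess what $\lambda$ is. A natural guess: over $C^o$, where fibers are smooth with $J\neq 0,1$, a symplectic elliptic fibration with section is locally $E_\tau=\C/(\Z+\tau\Z)$ over the base. The form $\omega$ restricted near a fiber gives, via the contraction $\omega(\cdot, v)$ with the vertical vector field $v$ dual to $dz$, a holomorphic 1-form on the base. More intrinsically, $\omega$ identifies the normal bundle of the section, i.e. the cotangent of $C$, with the dual of the relative tangent: $T^*_cC\cong H^0(F_c,\Omega^1)$. So at each $c$ we get a pair consisting of an elliptic curve $F_c$ and a linear isomorphism $T_cC\to H^0(F_c,\Omega^1)^*$. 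Composing with $dJ$, which is nonzero off ramification, and using that $\sL$ is the $\C^*$-bundle over $\sM^\star$ built from the Hodge line bundle twisted by $T\sM^\star$, we get a point of $\sL$. I take this as the content of Definition \ref{d.invariant}, with $C^o$ the locus where $F_c$ is smooth, $J(c)\notin\{0,1\}$ and $dJ_c\neq0$. Automorphisms $\pm1$ of the fibers are why $J=0,1$ is excluded. The plan is to show that this pair of data determines everything locally, and then glue and extend.

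For (b)$\Rightarrow$(a): an isomorphism $(\phi,\Phi)$ preserves fibers, sections and $\omega$. Hence it intertwines the identification $T^*C\cong f_*\Omega^1_{X/C}$ and the fiber moduli, so $\lambda=\w\lambda\circ\phi$. Moreover $\phi(C^o)=\w C^o$, since $C^o$ is defined intrinsically from $f$ and $J$, and $J=\w J\circ\phi$.

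For (a)$\Rightarrow$(b), I would proceed in three steps.

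First, over a small disk $O\subset C^o$, write both fibrations as $O\times\C/\Lambda(t)$ and $O\times\C/\w\Lambda(t)$ with zero sections. Every symplectic form with a zero section in this setting is $\omega=g(t)\,dt\wedge dz$. Normalizing $z$ by the Hodge data makes $g\equiv1$. The equality of $\lambda$'s says that $\Lambda(t)$ and $\w\Lambda(\phi(t))$ agree up to the scaling dictated by $d\phi$ and the forms. Concretely, $z\mapsto c(t)z$ with $c(t)\phi'(t)=1$ maps one lattice family to the other and pulls $\w\omega$ back to $\omega$. Here there is an ambiguity of $\pm1$, i.e. multiplication by $-1$, which is an automorphism preserving $\omega$ only if combined with nothing else; since $d(-z)\wedge dt=-dz\wedge dt$, it does not preserve $\omega$. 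So the local symplectic isomorphism over $\phi$ is unique, because $J\neq0,1$ kills the extra automorphisms.

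Second, uniqueness lets these local isomorphisms glue over all of $C^o$ to $\Phi^o\colon f^{-1}(C^o)\to\w f^{-1}(\w C^o)$.

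Third, and this is the main obstacle, $\Phi^o$ must be extended across the finitely many discrete points of $C\setminus C^o$. I would first check that the monodromies match near each missing point: they are determined by the local branch of $J$ together with the lift $\lambda$, which encodes the sign of $\pm1$ monodromy through the Hodge bundle. This gives a bimeromorphic map near the singular fiber. Relative minimality, via uniqueness of minimal models of elliptic surfaces, then turns it into a biholomorphism. Finally, $\Phi^*\w\omega=\omega$ extends by continuity and $\Phi(\Sigma)=\w\Sigma$ by closure. If the sign issue is delicate, I would instead compare the Weierstrass models $y^2=x^3+a(t)x+b(t)$: the data $\lambda$ fixes $a,b$ up to the $\C^*$-action, and $\omega=dt\wedge dx/y$ fixes the scaling. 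Since the Weierstrass coefficients are holomorphic across the bad points, the isomorphism extends there directly.
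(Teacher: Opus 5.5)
Your proposal is correct. Steps 1--2 match the paper in substance, but you extend the isomorphism across $C\setminus C^o$ by a more local route than the paper does.

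\textbf{How the paper argues.} Via Theorem \ref{t.lambda*}, the paper uses Lemma \ref{l.mu}, i.e.\ action--angle coordinates (Theorem \ref{t.AA}), to identify both restrictions to $C^o$ with the model fibration $X^{\lambda}$ built from $\lambda$. This gives the isomorphism over $C^o$ globally, with no gluing. It then notes that the functional and homological invariants agree, the latter because $\pi_1(C^o)\to\pi_1(C^{\rm reg})$ is surjective. It then appeals to the Friedman--Morgan classification of elliptic fibrations with section by these two invariants to get $\Phi$ on all of $X$. Only at the end is $\Phi^*\w\omega=\omega$ checked, by the identity theorem.

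\textbf{Steps 1--2.} These are the same content done by hand: the normal form $\omega={\rm d}t\wedge{\rm d}z$, the map $z\mapsto z/\phi'(t)$, and gluing via the rigidity of Lemma \ref{l.id}.

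\textbf{Step 3, first route.} This is essentially the paper's argument. Two remarks:
\begin{itemize}
\item[(1)] Matching monodromy is automatic once $\Phi^o$ exists; there is nothing to check via $\lambda$.
\item[(2)] The classification only produces \emph{some} isomorphism of germs. It agrees with $\Phi^o$ only up to the fiberwise inversion $-1$, and must be composed with it to genuinely extend $\Phi^o$. The paper also passes over this point.
\end{itemize}

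\textbf{Step 3, Weierstrass fallback.} This is the cleaner argument, since it avoids the classification theorem and extends $\Phi^o$ itself. On a punctured disc, $\Phi^o$ has the form $(t,x,y)\mapsto(\phi(t),u^2x,u^3y)$. Write $\omega=h(t)\frac{{\rm d}x\wedge{\rm d}t}{y}$ and $\w\omega=\w h(\w t)\frac{{\rm d}\w x\wedge{\rm d}\w t}{\w y}$, with $h,\w h$ nowhere vanishing on the whole disc. The condition $(\Phi^o)^*\w\omega=\omega$ then forces
\[
u=\w h(\phi(t))\,\phi'(t)/h(t),
\]
which is holomorphic and invertible across the puncture. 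So the map extends to an isomorphism of Weierstrass models, and then of the minimal models by Lemma \ref{l.Ktrivial}.

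You should state this formula as the reason the extension works. Holomorphy of $a,b$ alone only makes $u$ meromorphic. To exclude zeros and poles that way, you would additionally need minimality of the Weierstrass data.

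\textbf{What each approach buys.} Your route uses $\omega$ essentially and is purely local. The paper's extension step uses only the complex-analytic invariants, plus a global classification theorem.

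\textbf{A slip in your guessed definition.} The form $\omega$ identifies $T^*_cC$ with $H^0(F_c,TF_c)=H^0(F_c,\Omega^1)^*$. So $f_*\Omega^1_{X/C}\cong T_C$, not $T^*C$. The paper's $\lambda(c)$ is the period lattice $\chi_c^{-1}(\Sigma\cap X_c)\subset T^*_cC$, transported by ${\rm d}_cJ$. This does not affect your argument.
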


Recall that an elliptic fibration is determined by its functional and homological invariants, but its functional invariant alone cannot determine it.
Theorem \ref{t.lambda} says that the symplecto-functional invariant, which is a factorization of the functional invariant, determines the elliptic fibration.
Thus the information of the homological invariant is included in the factorization.

Based on Theorem \ref{t.lambda}, we prove Theorem \ref{t.factorize} by constructing the symplecto-elliptic reduction $(\h{f}\colon  \h{X} \to \h{C}, \h{\Sigma}, \h{\omega})$ using the image of the symplecto-functional invariant in $\sL$.

We have the following application of  Theorem \ref{t.factorize} to elliptic K3 surfaces.

\begin{theorem}\label{t.K3}
Let $(f_1\colon  X_1\to C_1, \Sigma_1, \omega_1)$
and $(f_2\colon  X_2 \to C_2, \Sigma_2, \omega_2)$ be two nonisotrivial symplectic elliptic fibrations of K3 surfaces. 
Assume that there are  connected open subsets $O_1 \subset C_1$ and $O_2 \subset C_2$ and an isomorphism of the symplectic elliptic fibrations $(f_1\colon  X_1\to C_1, \Sigma_1, \omega_1)|_{O_1}$ and $(f_2\colon  X_2 \to C_2, \Sigma_2, \omega_2)|_{O_2}$ given by a pair  of biholomorphic maps $\phi\colon O_1 \to O_2$ and $\Phi\colon f_1^{-1}(O_1) \to f_2^{-1}(O_2)$.
\begin{itemize}
\item[(i)]  Then the symplecto-elliptic reduction of $(f_1\colon  X_1\to C_1, \Sigma_1, \omega_1)$ is naturally isomorphic to the symplecto-elliptic reduction of $(f_2\colon  X_2 \to C_2, \Sigma_2, \omega_2)$.
 \item[(ii)] The isomorphism $(\phi, \Phi)$ is compatible with the symplecto-elliptic reductions, as follows. 
\[\xymatrix@R=1.5em{
&f_1^{-1}(O_1)\ar[r]^\Phi_\sim\ar@{^{(}->}[dl]&f_2^{-1}(O_2)\ar@{_{(}->}[dr]&\\
X_1\ar@{-->}[r]^{\Xi_1}\ar[d]_{f_1}&\h{X}_1\ar@{=}[r]\ar[d]&\h{X}_2\ar[d]&X_2\ar@{-->}[l]_{\Xi_2}\ar[d]^{f_2}\\
C_1\ar[r]^{\xi_1}&\h{C}_1\ar@{=}[r]&\h{C}_2&C_2\ar[l]_{\xi_2}\\
&O_1\ar@{_{(}->}[ul]\ar[r]^\phi_\sim&O_2\ar@{^{(}->}[ur]&
}\]
\item[(iii)]
If, furthermore, both $(f_1\colon  X_1\to C_1, \Sigma_1, \omega_1)$ and $(f_2\colon  X_2 \to C_2, \Sigma_2, \omega_2)$ are symplecto-elliptically reduced, then $\Phi$ can be extended to a symplectic biholomorphic map between $X_1$ and $X_2$.
\end{itemize}
\end{theorem}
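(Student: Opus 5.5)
The plan is to pass from the local isomorphism to a global statement through the symplecto-functional invariant, and then use the universal property of Theorem \ref{t.factorize}.

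First, since $(\phi,\Phi)$ is an isomorphism of symplectic elliptic fibrations over $O_1$, the symplecto-functional invariants satisfy $\lambda_1 = \lambda_2\circ\phi$ on $O_1\cap C_1^o$. This holds because $\lambda$ is constructed locally from $(f,\Sigma,\omega)$; it is the easy direction (b)$\Rightarrow$(a) of Theorem \ref{t.lambda}, applied to the restricted fibrations.

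Next I would recall how the symplecto-elliptic reduction is built from the image $\lambda(C^o)\subset\sL$. I expect it to be obtained as follows. One takes the normalization $\h{C}^o$ of the "image curve" of $\lambda$: the Riemann surface through which $\lambda$ factors maximally, so that $\lambda=\h\lambda\circ\xi$ with $\h\lambda$ generically injective. One then compactifies this to $\h{C}$ and extends $\xi$. The fibration $\h{X}$ is the one with symplecto-functional invariant $\h\lambda$, which exists and is unique by Theorem \ref{t.lambda}. For K3 surfaces $C_i=\BP^1$ is compact, so $\xi_i$ is a finite branched cover and $\h{C}_i$ is the compact Riemann surface obtained as the normalization of the closure of the image of $\lambda_i$ in a compactification of $\sL$. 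Because $\lambda_1$ and $\lambda_2\circ\phi$ agree on the open set $O_1$, the irreducible analytic curves $\overline{\lambda_1(C_1^o)}$ and $\overline{\lambda_2(C_2^o)}$ share an open piece. By the identity theorem they coincide, which gives a canonical identification $\h{C}_1=\h{C}_2$ with $\h\lambda_1=\h\lambda_2$. Theorem \ref{t.lambda} then yields an isomorphism $\h{X}_1\cong\h{X}_2$ of symplectic elliptic fibrations over this identification. This proves (i).

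For (ii), by construction $\xi_1=\xi_2\circ\phi$ on $O_1$, since both are the factorization maps of the same $\lambda$ there. Locally over $O_1$, the maps $\Xi_1$ and $\Xi_2\circ\Phi$ are then two isomorphisms of symplectic elliptic fibrations onto the same pullback of $\h{X}$. I must show they agree. The point I expect to be the main obstacle is controlling the ambiguity here: a symplectic automorphism of a nonisotrivial elliptic fibration over a base that fixes the section and covers the identity must be the identity. Over a generic point, an automorphism of the elliptic curve fixing the origin is $\pm1$ (extra automorphisms occur only at $J=0,1$, which is excluded generically by nonisotriviality). Moreover $-1$ acts on $\omega$ by $-1$, since it negates the fiber direction and fixes the base. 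So the automorphism is the identity, and this rigidity forces $\Xi_1=\Xi_2\circ\Phi$ on $f_1^{-1}(O_1)$ away from a proper analytic subset.

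For (iii), the reductions are the identity, so (ii) gives a biholomorphism $\h{X}_1=X_1\to X_2$ covering $\h{C}_1=C_1\to C_2$. This map is symplectic and agrees with $\Phi$ on $f_1^{-1}(O_1)$, so it is the desired extension. It is holomorphic everywhere rather than merely meromorphic because it is an isomorphism of relatively minimal fibrations, as provided by Theorem \ref{t.lambda}.
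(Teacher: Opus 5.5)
Your argument is essentially the paper's. From $\lambda_1=\lambda_2\circ\phi$ on $O_1\cap C_1^o$ you get that the images of $\lambda_1,\lambda_2$ coincide, generic injectivity of $\h\lambda_1,\h\lambda_2$ identifies $\h C_1$ with $\h C_2$, and Theorem \ref{t.lambda*} gives the isomorphism of reductions; cite that theorem rather than Theorem \ref{t.lambda}, since the invariants are only known to agree on a Zariski-open set. Your rigidity argument for (ii) is the content of Lemma \ref{l.id}, and it makes explicit the paper's remark that (ii) ``follows from the construction''.

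The one step to tighten is your appeal to the ``closure in a compactification of $\sL$''. No such compactification is constructed, and for an arbitrary one the closure of a holomorphic image need not be analytic. Do what the paper does instead: choose a Zariski-open $\sM'\subset\sM^{\star}$ with $J_i^{-1}(\sM')\subset C_i^o$. Finiteness of $J_i$ on $C_i\cong\BP^1$ then makes $\lambda_i\colon J_i^{-1}(\sM')\to\varpi^{-1}(\sM')$ proper, so its image is a closed irreducible analytic curve, and the identity theorem applies there.
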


In the opposite direction of Theorem \ref{t.factorize},
we also give the following classification of isogenies with a given target, the statement of which requires Theorem \ref{t.quotient} on symplectic automorphisms of germs of symplectic elliptic fibrations.

\begin{theorem}\label{t.isogeny} Let $(f\colon  X \to C, \Sigma, \omega)$  be a symplectic elliptic fibration.  \begin{itemize} \item[(i)] For an isogeny $(\psi, \Psi)$ as in Definition \ref{d.isogeny} and a point $ y\in \w{C}$ where $\psi$ is ramified, the equivalence class of the germ of $f$ at $\psi(y)$  and $d\coloneqq {\rm mult}_y \psi$ must be one of the list in Theorem \ref{t.quotient}. 
\item[(ii)] Conversely, let $\psi\colon  \w{C} \to C$ be a surjective holomorphic map from a Riemann surface $\w{C}$ such that for any point $y \in \w{C}$ where $\psi$ is ramified, the equivalence class of the germ of $f$ at $\psi(y)$  and $d\coloneqq {\rm mult}_y \psi$ is one of the list in Theorem \ref{t.quotient}. Then there exists a symplectic elliptic fibration $(\w{f}\colon  \w{X} \to \w{C}, \w{\Sigma}, \w{\omega})$ and  \(\psi\) extends to an isogeny $(\psi, \Psi)$ as in Definition \ref{d.isogeny}. Furthermore, both $(\w{f}\colon  \w{X} \to \w{C}, \w{\Sigma}, \w{\omega})$ and $\Psi$ are uniquely determined (up to isomorphisms) by $\psi$.\end{itemize}
\end{theorem}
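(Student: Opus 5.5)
The idea is to localize everything over small discs. Isogenies are determined away from finitely many points by pullback, and the only content is what happens over ramification points of $\psi$.

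For (i), let $y\in\w{C}$ be a ramification point with $d={\rm mult}_y\psi$ and $x=\psi(y)$. Choose a disc $\w{D}\ni y$ and a disc $D\ni x$ with $\psi|_{\w{D}}\colon\w{D}\to D$ equivalent to $z\mapsto z^d$ and unramified off $y$. Over $\w{D}^*=\w{D}\setminus\{y\}$, condition (c) of Definition \ref{d.isogeny} (after shrinking, since $U$ is Zariski-open) identifies $(\w{f},\w{\Sigma},\w{\omega})|_{\w{D}^*}$ with $(\psi|_{\w{D}^*})^*(f,\Sigma,\omega)$.

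The deck transformation $z\mapsto e^{2\pi i/d}z$ of $\w{D}^*\to D^*$ lifts tautologically to the pullback. It is fiber-preserving, fixes the section and preserves the symplectic form, since all three are pulled back from $D^*$. Transport it to $\w{f}^{-1}(\w{D}^*)$. Because $\w{f}$ is a relatively minimal elliptic fibration, I would use the uniqueness of relatively minimal models (an automorphism of the punctured family preserving the section extends over the central fiber of the minimal model). This gives a symplectic automorphism $g$ of order $d$ of the germ $\w{f}^{-1}(\w{D})$, covering the rotation and preserving $\w{\Sigma}$ and $\w{\omega}$.

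The quotient $\w{f}^{-1}(\w{D})/\langle g\rangle$, after minimal resolution and contraction to the relatively minimal model, is an elliptic fibration over $D$ that agrees with $f$ over $D^*$ compatibly with section and form. By uniqueness of relatively minimal extensions it is the germ of $f$ at $x$. So the pair (germ of $f$ at $x$, $d$) arises as a quotient of a germ by a symplectic automorphism of order $d$, and Theorem \ref{t.quotient} says exactly which such pairs occur.

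For (ii), I would go in the opposite direction using the same theorem. Over the unramified locus, take $\w{X}^o=\psi^*X$ with the pulled-back section and form. Near each ramification point $y$, Theorem \ref{t.quotient} supplies a symplectic elliptic germ $\w{f}_y$ over $\w{D}$ with an order-$d$ automorphism whose quotient is the germ of $f$ at $\psi(y)$. Over $\w{D}^*$, both $\w{f}_y$ and $\psi^*f$ are identified with the pullback of $f|_{D^*}$, so the two can be glued along $\w{D}^*$ to form $\w{X}$; the section and the form glue as well. The map $\Psi$ is the composition with the quotient map, which is meromorphic across the central fibers and holomorphic on $\w{f}^{-1}(U)$.

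Uniqueness follows from two facts. First, $\w{X}|_U$ is forced to be the pullback. Second, the extension of a punctured-disc elliptic fibration to a relatively minimal one is unique, and isomorphisms over $U$ extend.

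The main obstacle is the extension of the deck automorphism across the central fiber in (i), together with keeping track of the section and form there. I would handle it via the Néron/minimal-model property: for relatively minimal elliptic fibrations with a section, every isomorphism over a punctured disc extends over the disc. The form is preserved because $g^*\w{\omega}-\w{\omega}$ vanishes on a dense open set.
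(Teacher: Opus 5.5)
Your outline is the paper's proof. In (i), the deck transformation of $\psi$ near $y$ becomes a section- and form-preserving automorphism of order $d$ of the germ of $\w{f}$. The germ of $f$ at $\psi(y)$ is then identified with the quotient of Proposition \ref{p.descent}, and Theorem \ref{t.quotient} applies. In (ii), local quotient models are glued to the $\psi$-pullback, and uniqueness is Lemma \ref{l.isogeny} via Lemma \ref{l.Ktrivial}.

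\textbf{The gap.} The step that fails as written is ``the section and the form glue as well'' in (ii). The local model comes from Corollary \ref{c.realize}; Theorem \ref{t.quotient} only classifies quotients. Lemma \ref{l.germ} identifies the model's quotient with the germ of $f$ only as an elliptic fibration with section, not as a symplectic one. The model's form $\omega_S$ descends to some symplectic form $\omega'$ on $f^{-1}(D)$. One has $\omega|_{f^{-1}(D)}=g\,\omega'$ for a nowhere vanishing holomorphic $g$ on $D$, since the ratio is constant on compact fibers, and in general $g\neq 1$. So over $\w{D}^*$ the form of $\w{f}_y$ and the pulled-back form differ by the factor $g\circ\psi$, and the gluing is not symplectic.

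\textbf{The repair.} This is exactly what the paper does. Replace $\omega_S$ by $(g\circ\psi\circ\w{f}_y)\,\omega_S$. It is still symplectic, and still $\sigma$-invariant because $\psi$ is invariant under the deck group. It descends to $\omega$, after which the gluing goes through.

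\textbf{The extension step in (i).} Here you differ from the paper only in how the deck automorphism is extended across the central fiber. The paper notes that the fiberwise self-map built from $\Psi$ is bimeromorphic on $\w{f}^{-1}(\w{D})$, because $\Psi$ is meromorphic on all of $\w{X}$. It then applies Lemma \ref{l.Ktrivial}.

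Your analytic ``N\'eron property'' is true as you state it, but only because of the section. A translation of a trivial family $E\times\Delta$ by a section $\Delta^*\to E$ with an essential singularity at $0$ does not extend even meromorphically. The property also needs an argument in the analytic category. For instance, on Weierstrass models a section-preserving isomorphism over $\w{D}^*$ has the form $(t,x,y)\mapsto(\zeta t,u^2x,u^3y)$ with $u$ nowhere vanishing on $\w{D}^*$. Moreover $u^4=a(\zeta t)/a(t)$, or $u^6=b(\zeta t)/b(t)$ when $a\equiv 0$, so $u$ is meromorphic at $0$. The map is therefore bimeromorphic, and Lemma \ref{l.Ktrivial} finishes.

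With that supplied, your route has the mild advantage of not using global meromorphy of $\Psi$. The same argument also justifies ``isomorphisms over $U$ extend'' in your uniqueness step.
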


 From Theorem \ref{t.K3} (iii), it is interesting to know which elliptic K3 surfaces are symplecto-elliptically reduced. 
 Besides Example \ref{ex.reduced}, a more interesting example is the following, the proof of which uses Theorem \ref{t.isogeny}.

\begin{theorem}\label{t.generic}
Let $X$ be a K3 surface with Picard number $\rho(X)\leq 9$ which admits a non-isotrivial symplectic elliptic fibration $(f\colon  X \to C, \Sigma, \omega)$. Then it is symplecto-elliptically reduced.
\end{theorem}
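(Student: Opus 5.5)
We argue by contradiction. Suppose $(f\colon X \to C, \Sigma, \omega)$ is not symplecto-elliptically reduced. Then the isogeny $(\xi, \Xi)$ of Theorem \ref{t.factorize} to the reduction $(\h{f}\colon \h{X} \to \h{C}, \h{\Sigma}, \h{\omega})$ is not an isomorphism. Since $C \cong \BP^1$ and a nonconstant map between compact Riemann surfaces is finite, we first check that $d \coloneqq \deg \xi \geq 2$. If $\deg \xi = 1$, then $\Xi$ is an isomorphism over a Zariski-open subset, hence a fibrewise birational map between relatively minimal elliptic fibrations, hence an isomorphism. So $\deg\xi\ge 2$.

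Next we show that $\h{X}$ is again a K3 surface (or at least has $\chi=24$ structure we control). The form $\h{\omega}$ is a global symplectic form on $\h{X}$, so $K_{\h{X}}$ is trivial. Compactness of $\h{X}$ follows because $\h{C}$ is the image of the compact curve $C$. By Riemann--Hurwitz, $\h{C}\cong \BP^1$. Hence $\h{X}$ is an elliptic K3 or an abelian surface, and a section excludes the abelian case for a nonisotrivial fibration over $\BP^1$.

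The key step is then a Picard number estimate. At every branch point of $\xi$, Theorem \ref{t.isogeny}(i) restricts the pair (fiber type of $\h{f}$ at the branch value, local degree) to the list of Theorem \ref{t.quotient}. By Riemann--Hurwitz, $\xi\colon \BP^1\to\BP^1$ of degree $d\ge2$ has total ramification $2d-2\ge 2$, so there are at least two branch points. The fibers of $f$ over ramification points are then determined: by the local model, a fiber of type $T$ downstairs pulls back under a degree-$m$ cyclic base change to a known type (e.g.\ $\mathrm{II}^*$ with $m=2$ gives $\mathrm{IV}^*$, as in Example \ref{ex:Shi1}). In all the listed cases the pulled-back fiber is reducible. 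We would verify this case by case, the expectation being that symplecticity forces the quotient to be nontrivially singular.

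Moreover, the fibers over the $d$ preimages of a non-branch point are all isomorphic. Since $\deg J_X = d\cdot \deg J_{\h{X}}$, the fiber configuration of $X$ is ``$d$-fold repeated''. We then count components. By the Shioda--Tate formula,
\[
\rho(X)\ \ge\ 2+\sum_{t}(m_t-1),
\]
where $m_t$ is the number of components of the fiber at $t$. The goal is to show that the sum is at least $8$ whenever $d\ge 2$. Using Euler numbers, $\sum_t e(F_t)=24$. The fibers of $X$ over branch points come from the list of Theorem \ref{t.quotient}, and they carry many components relative to their Euler number, since $e(F)-(m-1)$ is $1$ or $2$. We then bound $\sum_t (e(F_t)-(m_t-1))$, the number of fibers with additive contribution, and deduce $\sum (m_t-1)\ge 8$, giving $\rho(X)\ge 10$, a contradiction.

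The main obstacle is this last combinatorial step. One must carefully use the list of Theorem \ref{t.quotient} to show that the ramified fibers absorb enough Euler number in reducible components. Where that fails, one must argue via symmetry (the deck-type structure coming from $\xi$) that the unramified fibers are forced to be reducible too. For example, if $\h{X}$ has only $\mathrm{I}_1$ and $\mathrm{II}$ fibers away from the branch points, they lift to $d$ copies each, but these are irreducible. So the whole weight must come from the branch fibers and from the constraint $24=d\cdot(\text{unbranched part of }\h{X})+(\text{branched part})$. I would first try to enumerate the possible $(d,$ branch data$)$ allowed by $e(\h{X})=24$ and check each case, hoping for a uniform inequality.
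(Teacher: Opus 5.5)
Your preliminary reductions are fine: $\deg\xi\ge 2$, $\h{X}$ is an elliptic K3 over $\BP^1$, and $\xi$ has at least two branch points. The problem is the Picard-number estimate. It is aimed at the wrong surface, and the inequality $\sum_t(m_t-1)\ge 8$ on $X$ that you set as your goal is false.

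The fibers that an isogeny forces to be highly reducible are those of the \emph{target} $\h{f}$ over the branch values. By Theorem \ref{t.quotient} they are of type $\mathrm{I}_0^*$, $\mathrm{I}_k^*$, $\mathrm{IV}^*$, $\mathrm{III}^*$ or $\mathrm{II}^*$: the symplectic $\Z/d\Z$-quotient acquires an $A_{d-1}$ point, and its resolution adds components \emph{downstairs}. The corresponding fibers of the \emph{source} $f$ are those of Theorem \ref{t.auto}. There they are smooth, of type $\mathrm{I}_0$, in cases (1)--(6), and irreducible, of type $\mathrm{II}$, in case (10). So your assertion that ``in all the listed cases the pulled-back fiber is reducible'' is wrong.

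Here is a concrete failure. Take a general elliptic K3 surface with two $\mathrm{I}_0^*$ fibers and twelve $\mathrm{I}_1$ fibers, and pull it back by a double cover of $\BP^1$ branched at the two $\mathrm{I}_0^*$ points, as in Example \ref{ex.Og} and the remark following it. The resulting source $X$ has smooth fibers over the ramification points and $24$ fibers of type $\mathrm{I}_1$. Hence $\sum_t(m_t-1)=0$, although $\rho(X)=10$. The Picard number is carried entirely by a Mordell--Weil group of rank $8$, which no count of fiber components or Euler numbers on $X$ can detect. Your planned enumeration of $(d,\text{branch data})$ therefore cannot yield a uniform inequality, because this is a genuine configuration with $d=2$.

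The missing idea is to move the Picard number across the isogeny. The map $\Xi\colon X\dashrightarrow \h{X}$ is a dominant rational map of finite degree between K3 surfaces, so $\rho(X)=\rho(\h{X})$ by \cite[Theorem 12.9]{SS}. Now apply Shioda--Tate to $\h{X}$, where the count is immediate. Each of the at least two fibers of $\h{f}$ over branch values of $\xi$ has at least five components by Theorem \ref{t.quotient}. Therefore $\rho(\h{X})\ge 2+4+4=10$, contradicting $\rho(X)\le 9$. This is the paper's argument, and it needs no case analysis of the source fibers.
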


The conditions of Theorem \ref{t.generic} hold for a  very general elliptic K3 surface (see  \cite[Remark 4.2]{Gee}).
On the other hand,  we have learned from Keiji Oguiso the following explicit example of an elliptic K3 surface \(\w{X} \to \w{C}\)  which is not symplecto-elliptically reduced.

\begin{example}\label{ex.Og} 
By \cite[Table A, \(\mathscr{J}_7\)]{Og},
there exists a nonisotrivial elliptic K3 surface \(X \to C \cong\mathbb{P}^1\) with exactly five singular fibers: two  \(\textup{I}_0^*\)-fibers, two \(\textup{I}_1\)-fibers, and one \(\textup{I}_4^*\)-fiber.
Considering that there is only one \(\textup{I}_4^*\)-fiber, we deduce that \(X\to C\) is symplecto-elliptically reduced (see Theorem \ref{t.auto}). 

Let \(p_1,p_2\in C\) be the two points over which the fibers are of type \(\textup{I}_0^*\) and
take a double cover \(\mathbb{P}^1 \cong \w{C} \to C\) which is  ramified exactly at \(p_1\) and \(p_2\). 
Then Theorem \ref{t.isogeny} (corresponding to Theorem \ref{t.quotient} (1), (2), or (5)) gives an elliptic K3 surface $\w{X} \to \w{C}$  and an isogeny \(\w{X} \dasharrow X\) under  a suitable choice of symplectic forms on $X$ and $\w{X}$.
The singular fibers of \(\w{X} \to \w{C}\) consist of two \(\textup{I}_4^*\)-fibers and four \(\textup{I}_1\)-fibers.
In particular, \(X\to C\) is the symplecto-elliptic reduction of \(\w X\to\w C\).
\end{example}

\begin{remark}
By Shioda–Tate Formula (see \cite[Chapter 11, Corollary 3.4]{Hu}), the K3 surface in Example \ref{ex.Og} has Picard number 18. 
We have learned from Philip Engel that the moduli of elliptic K3 surfaces having two fibers of type $\textup{I}_0^*$ has dimension 10 and a general such K3 surface has Picard number 10. 
Taking a double covering as in Example \ref{ex.Og}  of such a K3 surface, we obtain an elliptic K3 surface with Picard number 10 which is not symplecto-elliptically reduced (see \cite[Theorem 12.9]{SS}). 
Hence, the condition $\rho(X) \leq 9$ in Theorem \ref{t.generic} is optimal. 
\end{remark}

Note that the assumption of nonisotriviality in Theorem \ref{t.K3} is crucial.
In contrast, we prove the following theorem in the isotrivial case, which says that the symplectic geometry on the germs of neighborhoods of fibers is completely determined by the underlying complex geometry.

\begin{theorem}\label{t.isotrivial}
Let $(f\colon X \to C, \Sigma, \omega)$ and $(\w{f}\colon  \w{X} \to \w{C}, \w{\Sigma}, \w{\omega})$  be two isotrivial symplectic elliptic fibrations. 
Assume that there is a point $y \in C$ (resp. $\w{y} \in \w{C}$) with a neighborhood $y \in O \subset C$ (resp. $ \w{y} \in \widetilde{O} \subset \w{C}$) such that the two  elliptic fibrations $(f\colon X \to C, \Sigma)|_O$ and $(\w{f}\colon  \w{X} \to \w{C}, \w{\Sigma})|_{\w{O}}$ are isomorphic. Then there exists a neighborhood $y \in U \subset O$ (resp. $\w{y} \in \w{U} \subset \w{O}$) such that the two symplectic elliptic fibrations  $(f\colon X \to C, \Sigma, \omega)|_U$ and $(\w{f}\colon  \w{X} \to \w{C}, \w{\Sigma}, \w{\omega})|_{\w{U}}$ are isomorphic. \end{theorem}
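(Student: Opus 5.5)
We would prove Theorem \ref{t.isotrivial} by reducing to a normal form for isotrivial elliptic fibrations and exploiting the extra freedom that isotriviality gives. Since the functional invariant $J$ is constant, say $J \equiv j_0$, every smooth fiber is isomorphic to a fixed elliptic curve $E = \C/(\Z + \Z\tau)$. After shrinking, we may fix an isomorphism of elliptic fibrations $(\phi,\Phi)$ between $(f,\Sigma)|_O$ and $(\w f,\w\Sigma)|_{\w O}$. Pulling back $\w\omega$ by $\Phi$, we get two symplectic forms $\omega$ and $\omega' := \Phi^*\w\omega$ on the same germ $f^{-1}(U)$. Their ratio $h = \omega'/\omega$ is a nowhere vanishing holomorphic function on $f^{-1}(U)$. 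Since fibers are compact and connected, $h = f^* g$ for a nowhere vanishing holomorphic function $g$ on $U$. The problem thus becomes: find an automorphism of the elliptic fibration $(f,\Sigma)|_U$, possibly covering a nontrivial automorphism of the base germ, which multiplies $\omega$ by $f^*g$ (or by $f^* g^{-1}$).

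The natural candidate is a fiberwise-linear construction. Consider first the smooth case, where $U$ contains no singular value. We write $f^{-1}(U)$ near $\Sigma$ via the relative uniformization $U \times \C \to f^{-1}(U)$, $(t,z)\mapsto$ the point $z$ in the fiber, with lattice $\Z + \Z\tau$ constant after a local choice, using isotriviality and the triviality of monodromy on a disc. Then $\omega = a(t)\, dt\wedge dz$ for some nowhere vanishing $a$. A base change $t \mapsto \sigma(t)$ with $\sigma(y)=y$, lifted trivially on the $z$-coordinate, is an automorphism of the elliptic fibration with section. It multiplies $\omega$ by $a(\sigma(t))\sigma'(t)/a(t)$. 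So we need to solve the ODE $a(\sigma(t))\sigma'(t) = g(t) a(t)$. Setting $A$ to be a primitive of $a$ with $A(y)=0$, this reads $A(\sigma(t)) = \int_y^t g a$. Since $A$ is a local biholomorphism, this is solvable near $y$. This gives the result when $y$ is a regular value.

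For a singular fiber, we use the local structure of isotrivial fibrations. Near $y$, the fibration is the resolution (minimal model) of a quotient $(\Delta \times E)/\mu_m$, where $\mu_m$ acts on $\Delta$ by $s \mapsto \zeta s$ and on $E$ by a group automorphism of order $m \in \{1,2,3,4,6\}$. An $\mathrm{I}_n$ fiber with $n>0$ cannot occur, since $J$ is constant and finite; the possible types are $\mathrm{I}_0, \mathrm{I}_0^*, \mathrm{II}, \mathrm{III}, \mathrm{IV}$ and their duals. The symplectic form $\omega$ pulls back to a $\mu_m$-invariant form $b(s)\, ds \wedge dz$ on $\Delta\times E$ minus the fixed locus, which extends. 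We then repeat the ODE argument equivariantly upstairs. We seek $\sigma$ with $\sigma(\zeta s) = \zeta\sigma(s)$ solving $B(\sigma(s)) = \int_0^s \tilde g\, b$, where $\tilde g = g(s^m)$ and $B' = b$. Because $b(s)\,ds\wedge dz$ is invariant and $dz$ transforms by a character, $b$ is semi-invariant, and $B$ satisfies $B(\zeta s) = \chi\, B(s)$ for a fixed root of unity $\chi$. The right-hand side has the same equivariance, so the solution $\sigma = B^{-1}\circ(\ldots)$ is equivariant. The map $(s,z)\mapsto(\sigma(s),z)$ then descends to the quotient. Since the resolution is minimal and canonical, it lifts to an isomorphism of the relatively minimal models, preserving $\Sigma$ as the image of $\Delta\times\{0\}$.

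The main obstacle is this last step. First, we must verify that $b$ is nonvanishing at $s=0$, or at least has the right order so that $B$ is invertible as an equivariant local coordinate. Nondegeneracy of $\omega$ on the resolution forces a specific order of vanishing for $b$ at $s=0$, tied to $m$ and the character. If $b$ vanishes to order $k$, then $B \sim s^{k+1}$ and we must take a $(k+1)$-st root, which forces $g(0)$ to have a compatible root. That root always exists locally, since $g(0)\neq 0$, but the equivariance of the chosen root must be checked. Second, we must verify that the descended map extends holomorphically across the exceptional curves. We would first try to handle both points by invoking the uniqueness of minimal models together with the fact that a bimeromorphic fibration automorphism of relatively minimal elliptic fibrations is biregular.
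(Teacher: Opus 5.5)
Your proposal is correct. The reduction to one germ carrying $\omega$ and $\omega'=(f^*g)\,\omega$ matches the paper's proof of Theorem \ref{t.shrink}, and so does the smooth case (solve $A(\sigma(t))=\int_y^t ga$ and leave the fiber coordinate alone).

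For a singular fiber you take a different route.

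\textbf{The paper's route.} The paper stays downstairs. It uses the explicit isotrivial Weierstrass normal forms of Lemma \ref{l.isotrivial}. Case by case, it writes down a map $(t,x,y)\mapsto(\phi(t),u(t)x,v(t)y)$, where $u,v$ are fractional powers of $\phi(t)/t$ and $\phi$ is an explicit power series solving $u\phi'=vg$. It then applies Lemma \ref{l.Ktrivial}.

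\textbf{Your route.} You pass to the cyclic cover $\Delta\times E$, where the problem is literally the smooth-case ODE, solve it equivariantly, and descend.

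\textbf{How they relate.} These produce the same maps. Take $y^2=x^3+t^k$ with the sextic cover $t=s^6$, $x=s^{2k}X$, $y=s^{3k}Y$. Your map $(s,X,Y)\mapsto(\sigma(s),X,Y)$ descends to the paper's map with $\phi(t)=\sigma(s)^6$, $u=(\sigma(s)/s)^{2k}=(\phi/t)^{k/3}$ and $v=(\sigma(s)/s)^{3k}=(\phi/t)^{k/2}$. The vanishing order $5-k$ of your $b$ is what produces the exponent $6/(6-k)$ in the paper's formula for $\phi$.

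\textbf{What each buys.} Your version is uniform, with the smooth case being $m=1$, and it explains where the paper's formulas come from. It does need the structure of an isotrivial germ as the relatively minimal model of $(\Delta\times E)/\mu_m$, justified by agreement of functional and homological invariants. The paper only needs the Weierstrass table.

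\textbf{Your flagged obstacles.} Both close easily.
\begin{itemize}
\item[(a)] Equivariance of the root. Suppose $b$ vanishes to order $\ell$ at $0$. Comparing lowest-order terms in $B(\zeta s)=\chi B(s)$ forces $\chi=\zeta^{\ell+1}$. Set $G(s)=\int_0^s g(v^m)b(v)\,dv$. Then $B(s)/s^{\ell+1}$ and $G(s)/s^{\ell+1}$ are $\mu_m$-invariant units. Their $(\ell+1)$-st roots give local coordinates $\beta,\gamma$ with $\beta^{\ell+1}=B$, $\gamma^{\ell+1}=G$, $\beta(\zeta s)=\zeta\beta(s)$ and $\gamma(\zeta s)=\zeta\gamma(s)$. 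Hence $\sigma=\beta^{-1}\circ\gamma$ solves $B\circ\sigma=G$ and is automatically equivariant, whatever roots are chosen.
\item[(b)] Extension across the exceptional curves. This is exactly Lemma \ref{l.Ktrivial}, after identifying the bases via $t\mapsto\sigma(t^{1/m})^m$. It is also the paper's final step.
\end{itemize}
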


\medskip
Throughout the paper, we use the classical theory of elliptic surfaces originated from Kodaira \cite{Ko} and developed by many authors.  We cite results from standard references  \cite{BHPV}, \cite{FM} \cite{Hu}, \cite{Mi} and \cite{SS}. In addition, our methodology has two  ingredients from symplectic geometry: action-angle variables and symplectic singularities.  A symplectic elliptic fibration is a  Lagrangian fibration, equivalently, a completely integrable Hamiltonian system. The notion of action-angle variables in the theory of completely integrable Hamiltonian systems (Definition \ref{d.chi} and Theorem \ref{t.AA}) plays a key role in a number of steps in our arguments.
On the other hand, in dealing with  the local geometry of a  ramification point of an isogeny between symplectic elliptic fibrations, we use ideas (Definition \ref{d.singular} and Proposition \ref{p.Beauville})
from the theory of symplectic singularities originated from Beauville (\cite{Be},\cite{Fu}).

\subsection*{Outline.}
The outline of the paper is as follows.
In Section \ref{s.Jac}, we apply a  version of the action-angle variables to prove Theorem \ref{t.Jac}.
In Sections \ref{s.invariant} and \ref{s.reduction}, we focus on the symplectic geometry of non-isotrivial elliptic fibrations: we introduce the symplecto-functional invariant and prove Theorem \ref{t.lambda} (cf.~Theorem \ref{t.lambda*}) in Section \ref{s.invariant}, and we construct the symplecto-elliptic reduction and prove Theorem \ref{t.K3} in Section \ref{s.reduction}.
Then in Section \ref{s.auto}, we classify the symplectic automorphisms of germs of symplectic elliptic fibrations, which are used to prove Theorem \ref{t.isogeny} and Theorem \ref{t.generic} in Section \ref{s.quotient}.
Finally, we study the symplectic geometry of germs of isotrivial elliptic fibrations and prove Theorem \ref{t.isotrivial} in Section \ref{s.isotrivial}.

\subsection*{Acknowledgement}
The authors would like to thank Keiji Oguiso for valuable suggestions including Example \ref{ex.Og}, and  Mathias Sch\"utt for helpful comments on the first draft of the paper, especially, drawing our attention to \cite{Shio} and suggesting the current improved version of Theorem \ref{t.generic}.
We are grateful to Philip Engel for telling us his observation that $\rho(X) \leq 9$ in Theorem \ref{t.generic} is optimal and also to the referee for helpful suggestions to improve the presentation.

\section{Symplectic forms on genus 1 fibrations and their Jacobian fibrations}\label{s.Jac}

We recall the basic structure theory of smooth symplectic elliptic fibrations, a holomorphic version of the action-angle variables in completely integrable Hamiltonian systems (for example, see Theorem 44.1  of \cite{GS}).

\begin{definition}\label{d.chi}
Let $(f\colon  X \to C, \Sigma, \omega)$ be a smooth symplectic elliptic fibration. Then there is a canonical unramified covering map $\chi\colon  T^* C\to X$ which commutes with the projections to $C$, defined as follows.

For each $y \in C$, let $X_y = f^{-1}(y)$  be the fiber of $f.$ For  each element  $v \in T^*_y C,$ we have a holomorphic section $f^*v \in H^0(X_y, T^*X|_{X_y})$, which determines a unique holomorphic section  $$\vec{v} \ \in \ H^0(X_y, TX_y) \ \subset \ H^0(X_y, TX|_{X_y})$$ satisfying $\omega(\vec{v}, \cdot) = f^* v.$ Integrating the vector field $\vec{v}$ for each $v \in T^*_y C$,  we obtain an action of $T^*_y C$ on $X_y$ and  the orbit map $\chi_y\colon  T_y^*C \to X_y$ of $\Sigma \cap X_y$. Varying $y \in C$, we obtain the unramified covering map $\chi\colon  T^* C\to X$. \end{definition}

The following is \cite[Theorem 44.2]{GS}. In \cite{GS}, it is formulated in the $C^{\infty}$-setting, but the argument works in the holomorphic setting as well.

\begin{theorem}\label{t.AA}
Let $(f\colon  X \to C, \Sigma, \omega)$ and $\chi\colon  T^*C \to X$ be as in Definition \ref{d.chi}.
Let $\omega^C$ be the standard symplectic form on $T^*C$.
Then $\chi^* \omega = \omega^C.$ \end{theorem}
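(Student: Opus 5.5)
The claim is local on $C$, because both $\chi^*\omega$ and $\omega^C$ are $2$-forms on $T^*C$. So I work over a coordinate disc $O\subset C$ with coordinate $t$. Let $s\colon O\to X$ be the holomorphic section whose image is $\Sigma\cap f^{-1}(O)$. Write $(t,p)$ for the induced coordinates on $T^*O$, so that a covector is $p\,dt$ and $\omega^C=dp\wedge dt$ (up to the sign convention, which is fixed once and for all). Let $H=t\circ f$. The covector $p\,dt$ at $y$ produces the vector field $p\vec{H}$ on $X_y$, where $\vec{H}$ is the Hamiltonian vector field of $H$: $\omega(\vec{H},\cdot)=dH$. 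Hence
\[
\chi(t,p)=\mathrm{Fl}^{\,p}_{\vec{H}}\bigl(s(t)\bigr).
\]
Here $\mathrm{Fl}$ is the flow of the globally defined vector field $\vec{H}$ on $f^{-1}(O)$. It is complete, since it is tangent to the compact fibers.

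The computation has three steps.

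\emph{Step 1: the $t$-fibers are Lagrangian.} Consider the pullback of $\omega$ to the image of a fiber $T_y^*C$. This image is a fiber of $f$, and on a curve every holomorphic $2$-form vanishes. So $\chi^*\omega$ has no $dp\wedge dp$-type part, trivially because $\dim=2$. This is only a sanity check.

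\emph{Step 2: compute $\chi^*\omega(\partial_p,\partial_t)$.} We have $\chi_*\partial_p=\vec{H}$. Therefore
\[
\chi^*\omega(\partial_p,\partial_t)=\omega\bigl(\vec{H},\chi_*\partial_t\bigr)=dH(\chi_*\partial_t).
\]
Since $H\circ\chi=t$, this equals $\partial_t(t)=1$. So $\chi^*\omega=dp\wedge dt=\omega^C$ identically.

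The key point is that $\omega(\vec{H},\cdot)=f^*dt$ evaluates any lift of $\partial_t$ to $1$. This makes the answer independent of the twisting of the section and of the flow.

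For completeness I also verify that $\chi$ is holomorphic and an unramified covering. Holomorphy holds because the flow of a holomorphic vector field depends holomorphically on complex time and on the initial point. The covering property holds because $\chi$ restricted to each fiber $T^*_yC\to X_y$ is the orbit map of a transitive action of $\C$ on the elliptic curve $X_y$, hence a universal covering with lattice kernel. The lattice varies holomorphically with $y$, and $\chi$ is a local biholomorphism because $\chi^*\omega=\omega^C$ is nondegenerate.

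I expect no real obstacle. The only subtle point is checking that the holomorphic flow exists for complex time globally on each fiber. This follows because $\vec{H}$ is a nowhere vanishing vector field on the compact complex torus $X_y$, hence constant in a flat coordinate, so its flow is a translation. The sign convention for $\omega^C$ has to be matched with the convention $\omega(\vec{v},\cdot)=f^*v$. With $\omega^C=dp\wedge dt$ the identity holds exactly; with the other convention it would require the corresponding sign flip in Definition~\ref{d.chi}.
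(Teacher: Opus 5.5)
Your argument is correct, but it does not follow the paper's route, since the paper gives no proof of its own. It cites the general action--angle theorem \cite[Theorem 44.2]{GS} and remarks that the $C^\infty$ argument carries over to the holomorphic setting. In that general setting (Lagrangian fibrations over an $n$-dimensional base) the nontrivial part is the horizontal--horizontal block of $\chi^*\omega$. It is handled by requiring the section to be Lagrangian and by using that both forms are invariant under the commuting Hamiltonian flows that realize fiber translation.

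You instead exploit that the base is a curve, so $\chi^*\omega$ has a single coefficient. From $\chi_*\partial_p=\vec{H}$ and $H\circ\chi=t$ you get $\chi^*\omega(\partial_p,\partial_t)=1$ in one line. Neither the Lagrangian condition on $\Sigma$ (automatic for a curve in a surface) nor any flow-invariance argument is needed. Your version gives a self-contained check of exactly the holomorphic, two-dimensional statement the paper uses, complex-time flows included, rather than the assertion that the real argument ``works as well.'' The citation gives generality that the paper never uses.

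Your sign caveat is correct and worth keeping. Take the normalization $\omega^C=dz\wedge dw$ written in the proof of Lemma \ref{l.standard} and the convention $\omega(\vec{v},\cdot)=f^*v$ of Definition \ref{d.chi}. Your computation then gives $\chi^*\omega=dw\wedge dz=-\omega^C$, so the statement holds on the nose only after the conventions are matched. This is harmless for the paper: Lemma \ref{l.standard} holds for either sign, so one can normalize $\omega^C$ (and hence $\omega^{\mu}$ in Definition \ref{d.mu}) as $dw\wedge dz$ throughout.
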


Recall the following elementary fact.

\begin{lemma}\label{l.standard}
Let $C$ be a Riemann surface. For any section $\eta \in H^0(C, T^*C)$, let   $\tau_{\eta}\colon  T^*C \to T^*C$ be the translation by $\eta$, namely, the biholomorphic automorphism sending $x \in T_y^*C$ to $\tau_{\eta}(x) = x + \eta(y).$
  Then the standard symplectic form $\omega^C$ on $T^*C$ satisfies $\tau_{\eta}^* \omega^C = \omega^C$, namely, it is invariant under $\tau_{\eta}$.
\end{lemma}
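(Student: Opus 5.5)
The plan is to reduce the statement to the classical fact that translation by a closed $1$-form is a symplectomorphism of the cotangent bundle. Recall that $\omega^C = d\theta$, where $\theta$ is the tautological (Liouville) $1$-form on $T^*C$. At a point $x \in T^*_y C$ it is characterized by $\theta_x(\xi) = x(d\pi(\xi))$, where $\pi\colon T^*C \to C$ is the projection. I will compute $\tau_\eta^*\theta$ explicitly and then differentiate.

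First, since $\pi \circ \tau_\eta = \pi$, for $\xi \in T_x(T^*C)$ we have
\[
(\tau_\eta^*\theta)_x(\xi) = \theta_{\tau_\eta(x)}(d\tau_\eta(\xi)) = (x + \eta(y))(d\pi(\xi)) = \theta_x(\xi) + (\pi^*\eta)_x(\xi).
\]
Hence $\tau_\eta^*\theta = \theta + \pi^*\eta$. Applying $d$, which commutes with pullback, gives
\[
\tau_\eta^*\omega^C = \omega^C + \pi^* d\eta.
\]

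Second, $\eta$ is a holomorphic $1$-form on the Riemann surface $C$, so $d\eta = \partial\eta + \bar\partial\eta$. The term $\partial\eta$ is a holomorphic $(2,0)$-form on a complex curve, hence zero. The term $\bar\partial\eta$ vanishes because $\eta$ is holomorphic. So $d\eta = 0$ and $\tau_\eta^*\omega^C = \omega^C$.

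As a check, in a local coordinate $z$ on $C$ with fiber coordinate $p$ (so $x = p\,dz$), write $\eta = g(z)\,dz$. Then $\tau_\eta(z,p) = (z, p + g(z))$ and
\[
\tau_\eta^*(dp\wedge dz) = (dp + g'(z)\,dz)\wedge dz = dp\wedge dz.
\]
This direct computation could replace the argument above entirely. There is no real obstacle; the only point to state carefully is that $d\eta = 0$ holds for dimension reasons.
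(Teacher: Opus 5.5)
Your proof is correct and is essentially the paper's: the paper argues exactly as in your final check, writing $\omega^C = {\rm d}z\wedge {\rm d}w$ locally and $\tau_\eta(z,w) = (z, w+g(z))$, so that ${\rm d}z\wedge {\rm d}(w+g(z)) = {\rm d}z\wedge {\rm d}w$. Your Liouville-form argument ($\tau_\eta^*\theta = \theta + \pi^*\eta$ together with ${\rm d}\eta = 0$) is the coordinate-free version of that same computation, and it also shows that the conclusion holds for translation by any closed $1$-form.
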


\begin{proof}
Recall   (see,  for example, page 220 in \cite{GS}) that for any holomorphic coordinate $z$ on any coordinate neighborhood $O \subset C$ and the holomorphic function $w$  on $T^*O$ given by  $\frac{\p}{\p z},$ $$ \omega^C|_{\pi^{-1}(O)} = {\rm d} z \wedge {\rm d} w.$$
In terms of the coordinates $(z,w)$, the section $\eta|_O$ is given by $w = g(z)$ for some holomorphic function $g(z)$ and
$\tau_{\eta} (z,w) = (z, w + g(z)).$ Then  $$\tau_{\eta}^* \omega^C = {\rm d} z \wedge {\rm d}(w + g(z)) = {\rm d} z \wedge {\rm d} w = \omega^C$$ on $T^*O$, proving the lemma. \end{proof}

The following is a direct corollary of Theorem  \ref{t.AA} and Lemma \ref{l.standard}.

\begin{lemma}\label{l.trans}
In Theorem  \ref{t.AA}, let  $\tau_{\eta}\colon  X \to X$ be the translation by a section of $f$, which is a biholomorphic automorphism commuting with $f$.  Then $\tau_{\eta}^* \omega = \omega.$
\end{lemma}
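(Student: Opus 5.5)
The plan is to lift $\tau_\eta$ to $T^*C$ through the action-angle covering $\chi$ and then apply Lemma \ref{l.standard} upstairs. Here $\tau_\eta$ is translation by a section $\eta(C)\subset X$ of the smooth symplectic elliptic fibration $f$.

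First I construct the lift. Since $\chi\colon T^*C\to X$ is an unramified covering, and $\chi$ restricted to the zero section is the section $\Sigma$, the section $\eta(C)$ lifts locally. On a small open set $O\subset C$, I pick a local holomorphic section $\tilde\eta$ of $T^*O\to O$ with $\chi\circ\tilde\eta=\eta|_O$. This is possible because $\chi|_{T^*_yC}$ is the universal covering of the fiber, with kernel a lattice $\Lambda_y$ that varies holomorphically.

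Next I check the key identity
\[
\chi\circ\tau_{\tilde\eta}=\tau_\eta\circ\chi \quad\text{on } T^*O.
\]
The map $\chi_y$ is the orbit map of the group action of $T^*_yC$ on $X_y$, based at $\Sigma\cap X_y$. It is a group homomorphism once $X_y$ is given the group law with origin $\Sigma\cap X_y$. Indeed, the flows of the commuting vector fields $\vec v$ give a transitive abelian group action on $X_y$, and the induced group structure with identity $\Sigma\cap X_y$ is the elliptic curve group law. Two group structures on a genus 1 curve with the same origin coincide, since both are given by translations. Hence $\chi_y(x+\tilde\eta(y))=\chi_y(x)+\eta(y)$, which is exactly $\tau_\eta$ applied to $\chi_y(x)$.

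Then I compute:
\[
\chi^*(\tau_\eta^*\omega)=\tau_{\tilde\eta}^*\chi^*\omega=\tau_{\tilde\eta}^*\omega^C=\omega^C=\chi^*\omega ,
\]
using Theorem \ref{t.AA} for the second and fourth equalities and Lemma \ref{l.standard} (applied on $T^*O$, where $\tilde\eta\in H^0(O,T^*O)$) for the third. Since $\chi$ is a local biholomorphism, $\tau_\eta^*\omega=\omega$ on $f^{-1}(O)$. These sets cover $X$, so the identity holds globally.

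The main obstacle I expect is the homomorphism identity in the second step. I would handle it through the abelian flow argument just described, or alternatively by noting that $\tau_\eta$ preserves $\vec v$. The latter holds because translations act trivially on $H^0(X_y,TX_y)$, and a vector field on an elliptic curve that is tangent to the fibers is determined by its constant value.
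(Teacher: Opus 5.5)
Your proposal is correct and is the argument the paper intends. The paper gives no written proof and calls the lemma a direct corollary of Theorem \ref{t.AA} and Lemma \ref{l.standard}; that means lifting $\tau_\eta$ through $\chi$ to a translation $\tau_{\tilde\eta}$ of $T^*O$ by a local section, then pulling back $\omega^C$, exactly as you do, and your check that $\chi\circ\tau_{\tilde\eta}=\tau_\eta\circ\chi$ (via translation-invariance of $\vec v$ on $X_y$) supplies the detail the paper leaves implicit.
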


Theorem \ref{t.Jac} can be proved easily using Lemma \ref{l.trans} as follows.

\begin{proof}[Proof of Theorem \ref{t.Jac}]
Let us recall the relation (the notion of the analytic Tate-Shafarevich group) between a genus 1 
fibration $f\colon  X \to C$ and its Jacobian fibration $\Jac(f)\colon  \Jac(X) \to C$, from \cite[Theorem 11.1, Chapter V]{BHPV} or \cite[Proposition 5.5, Chapter I]{FM}.
 Let $\sA$ be the sheaf of local holomorphic sections of $\Jac(f)$, a sheaf of  abelian groups on  $C.$
 Given an element $\eta \in H^1(C, \sA),$ represent it by a 1-cocycle  $\{ \eta_{ij} \}$ with values in $\sA$ relative to an open cover $\{U_i \mid i \in I\}$ of $C$ such that $U_i \cap U_j \subset C^{\rm reg}$ for $i \neq j \in I$.
 Then the translation $$\tau_{ij}\colon  \Jac(f)^{-1}(U_i \cap U_j) \to \Jac(f)^{-1}(U_i \cap U_j)$$ given  by the section $\eta_{ij}$ of $\Jac(f)$ determines a new gluing of $\Jac(f)^{-1}(U_i)$ to $\Jac(f)^{-1}(U_j)$ to define a genus 1 
 fibration $f^{\eta}\colon  X^{\eta} \to C$ such that $\Jac(f^{\eta}) = \Jac(f)$. In fact, there is an element $\eta \in H^1(C, \sA)$ such that $f^{\eta}\colon  X^{\eta} \to C$ is isomorphic to $f\colon  X \to C$ as genus 1 fibrations over \(C\).

 Let $\{ X_i \mid i \in I\}$ be the open cover of $X = X^{\eta}$ corresponding to $\Jac(f)^{-1}(U_i)$ in the above gluing.
 Suppose there exists a symplectic form $\omega$ on $X = X^{\eta}$.
 Let $\omega_i$ be the symplectic form on $\Jac(f)^{-1}(U_i)$ corresponding to $\omega|_{X_i}$. Then $\omega_i|_{\Jac(f)^{-1}(U_i \cap U_j)} = \omega_j|_{\Jac(f)^{-1}(U_i \cap U_j)}$ by definition. By Lemma \ref{l.trans}, they are unchanged under the inverse translation $\tau_{ij}^{-1}$.
 Thus they induce a symplectic form $\Jac(\omega)$ on $\Jac(X)$. This proves Theorem \ref{t.Jac} (i): the map $\varphi$ is obtained from the biholomorphic map $X_i \cong \Jac(f)^{-1}(U_i)$ and is obviously an isomorphism of the symplectic genus 1 fibrations. A parallel argument proves (ii). \end{proof}

\section{Symplecto-functional invariant of a nonisotrivial symplectic elliptic fibration}\label{s.invariant}

\begin{definition}\label{d.lattice}
  Recall that $\overline{\sM}$ is the compactification of the moduli space $\sM$ of curves of genus 1, and we have identified it with $\BP^1$ such that $\sM$ is identified with $\C \subset \BP^1.$ Set $\sM^{\star} \coloneqq \sM \setminus\{ 0, 1\}$. Let $V$ be a 1-dimensional complex vector space. \begin{itemize}
\item[(i)] Let $\R{\rm Bases} (V)$ be the set of oriented $\R$-bases of $V$.  It is a complex manifold of dimension 2: an identification of  $V$ with $\C$ gives an identification
$$\R{\rm Bases}(\C) = \{ (z_1, z_2)  \mid z_1, z_2 \in \C \setminus \{0\}, {\rm Im}(z_2/z_1) >0\}.$$
\item[(ii)] The multiplicative group $\C^*$ acts on $\R{\rm Bases}(V)$ freely: an element $t \in \C^*$ sends $(v_1, v_2)$ to $(t v_1, t v_2)$.
\item[(iii)] The group ${\rm SL}_2(\Z)$ acts freely and discontinuously on $\R{\rm Bases}(V)$ by $$(v_1, v_2) \mapsto (a v_1 + b v_2, c v_1 + d v_2)$$ for $a,b,c,d \in \Z$ with $ad-bc =1$.
Each element $(v_1, v_2) \in \R{\rm Bases}(V)$ determines a lattice $\Z v_1 + \Z v_2 \subset V$.
Two elements determine the same lattice if and only if they are in the same ${\rm SL}_2(\Z)$-orbit.
The quotient complex manifold
$$\BL(V) \coloneqq \R{\rm Bases}(V)/ {\rm SL}_2(\Z)$$
is the {\em space of lattices} in $V$.
  \item[(iv)]    There is a canonical holomorphic map $\gamma_V\colon \BL(V) \to \sM$,  sending a lattice $\Z v_1 + \Z v_2$ in $V$ to the elliptic curve $V/(\Z v_1 + \Z v_2)$.
  \item[(v)] Two elliptic curves \(V/(\Z v_1 + \Z v_2)\) and \(V/(\Z v_3 + \Z v_4)\) are isomorphic if and only if their lattices are homothetic, i.e., there exists \(\alpha\in\mathbb{C}^*\) such that \((v_3,v_4)=(\alpha v_1,\alpha v_2)\).
    \item[(vi)] The $\C^*$-action in (ii) descends to a $\C^*$-action on $\BL(V)$ such that the isotropy subgroup at any point in $\gamma_V^{-1}(\sM^{\star})$ is $\{ \pm 1 \} \subset \C^*.$
   Hence \(\gamma_V^{-1}(\sM^{\star})\to \sM^{\star}\) is a \(\mathbb{C}^*/\{\pm 1\}\cong\mathbb{C}^*\)-bundle.
    \end{itemize}
    Let $\sV$ be a line bundle on a complex manifold $M$.
    \begin{itemize}
      \item[(vii)] Let $\pi_{\sV}\colon \BL(\sV) \to M$ be the fiber bundle whose fiber $\BL(\sV)_x$ at $x \in M$ is the space of lattices $\BL(\sV_x)$ in  the fiber $\sV_x$ of the line bundle $\sV$ at $x$.
      Denote by $\gamma_{\sV}\colon \BL(\sV) \to \sM$  the holomorphic map defined by $$\gamma_{\sV}|_{\BL(\sV)_x} = \gamma_{\sV_x} \mbox{  for each } x \in M.$$  \end{itemize} \end{definition}

\begin{definition}\label{d.sL}
In Definition \ref{d.lattice} (vii),
 we choose \(M\) to be $\sM^{\star}$ and choose $\sV$ to be 
the cotangent  bundle $T^*\sM^{\star}$. 
 Then we have  holomorphic maps
  \[
  \xymatrix{
  \BL(T^*\sM^{\star})\ar[rr]^{\gamma_{T^* \sM^{\star}}}\ar[d]_{\pi_{T^* \sM^{\star}}}&& \sM\ar[d]^{\jmath}\\
  \sM^{\star}\ar[rr]^{\jmath}&&\C.
  }
  \]
 Let $\sL \subset \BL(T^*\sM^{\star})$ be the hypersurface in $\BL(T^*\sM^{\star})$ defined by the equation $$\jmath \circ \gamma_{T^* \sM^{\star}} - \jmath \circ \pi_{T^* \sM^{\star}} \ = \ 0.$$ Let $\varpi\colon  \sL \to \sM^{\star}$ be the restriction of $\pi_{T^*\sM^{\star}}$ to $\sL$. We call it the {\em tautological lattice bundle}. It is a $\C^*/\{\pm 1\}$-fiber bundle by Definition \ref{d.lattice} (vi).
 \end{definition}

\begin{definition}[Symplecto-functional invariant]\label{d.invariant}
Let $(f\colon  X \to C, \Sigma, \omega)$ be a nonisotrivial symplectic elliptic fibration.
Let $J\colon  C \to \overline{\sM}$ be its functional invariant and set $$C^o \coloneqq \{ y \in C \mid J(y) \in \sM^{\star}, X_y \mbox{ is smooth},  \mbox{and}~J \mbox{ is unramified at } y\}.$$
By the flatness of \(f\), the restriction $(f\colon  X \to C, \Sigma, \omega)|_{C^o}$ is then a smooth elliptic fibration whose functional invariant is unramified.

For each $y \in C^o$, we have the unramified holomorphic map $\chi_y\colon T^*_yC \to X_y$ from Definition \ref{d.chi}. Then $\chi_y^{-1}(\Sigma \cap X_y)$ is a lattice in $T^*_y C$.  Since $J$ is unramified at $y$, we have the linear isomorphism $T^*_y C \cong T^*_{J(y)} \sM^{\star}$ induced by ${\rm d}_y J\colon  T_y C \to T_{J(y)} \sM^{\star}$, which sends the lattice  $\chi_y^{-1}(\Sigma \cap X_y) \subset T^*_y C$ to an  element $\lambda(y) \in \BL(T^*_{J(y)} \sM^{\star})$.
 Since the elliptic curve $X_y \cong T^*_y C/\chi_y^{-1}(\Sigma \cap X_y)$ has moduli type $J(y)$,  we have \(\gamma_{T^*\sM^{\star}}(\lambda(y))=J(y)\),  implying that  $\lambda(y) \in \sL_{J(y)}$.
Thus we obtain a holomorphic map $\lambda\colon  C^o \to \sL$ satisfying $J|_{C^o} = \varpi \circ \lambda$, which we call the {\em symplecto-functional invariant} of $(f\colon  X \to C, \Sigma, \omega)$. \end{definition}

\begin{definition}\label{d.mu}
In Definition \ref{d.sL}, suppose that we are given a holomorphic map  $\mu\colon R \to \sL$  from a Riemann surface $R$  satisfying the condition that the composition $\nu\coloneqq \varpi \circ \mu\colon R \to \sM^{\star}$ is unramified, hence $\mu$ itself is unramified.
For each $y \in R$,  the image $\mu(y) \in \sL_{\nu(y)}$  determines a lattice $\Lambda_y \subset T^*_{\nu(y)} \sM^{\star}$.
The family of elliptic curves  over $R$ $$\bigcup_{y \in R} (T^*_{\nu(y)} \sM^{\star})/\Lambda_y$$ defines a smooth elliptic fibration $(f^{\mu}\colon X^{\mu} \to R, \Sigma^{\mu})$ where  the section $\Sigma^{\mu} \subset X^{\mu}$ is given by the zero section of $T^* \sM^{\star}$.
Moreover, the standard symplectic form $\omega^{\sM^{\star}}$ on $T^* \sM^{\star}$ lifts
to a symplectic form $\omega^{\mu}$ on $X^{\mu}$ by Lemma \ref{l.standard}. We call $(f^{\mu}\colon X^{\mu} \to R, \Sigma^{\mu}, \omega^{\mu})$ the {\em symplectic elliptic fibration  determined by} $\mu$. \end{definition}

\begin{lemma}\label{l.mu}
Let  $(f\colon  X \to C, \Sigma, \omega)$ be a nonisotrivial symplectic elliptic fibration with the symplecto-functional invariant $\lambda\colon  C^o \to \sL$ as in Definition \ref{d.invariant}.
Let $(f^{\lambda}\colon X^{\lambda} \to C^o, \Sigma^{\lambda}, \omega^{\lambda})$ be the symplectic elliptic fibration  determined by $\lambda$ as in Definition \ref{d.mu}.
Then $(f^{\lambda}\colon X^{\lambda} \to C^o, \Sigma^{\lambda}, \omega^{\lambda})$ is isomorphic  to the restriction $(f\colon  X \to C, \Sigma, \omega)|_{C^o}$. \end{lemma}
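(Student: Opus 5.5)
The plan is to compare both fibrations with the cotangent bundle $T^*C^o$ through the action-angle map of Definition \ref{d.chi}, and then to transport the lattice data along the differential of $J$. Set $\nu = \varpi\circ\lambda = J|_{C^o}$. Since $J$ is unramified on $C^o$, ${\rm d}J$ gives a fiberwise linear isomorphism
\[ \delta\colon T^*C^o \to \nu^*T^*\sM^{\star}, \qquad \delta|_{T^*_yC} = \big(({\rm d}_yJ)^{*}\big)^{-1}. \]
It covers the identity of $C^o$ and is holomorphic.

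First, apply Definition \ref{d.chi} to the smooth fibration $(f,\Sigma,\omega)|_{C^o}$. This gives the covering $\chi\colon T^*C^o \to f^{-1}(C^o)$, whose fiberwise kernel $\chi_y^{-1}(\Sigma\cap X_y)$ is a lattice $L_y$. So $f^{-1}(C^o)$ is the quotient $T^*C^o/L$ by this family of lattices, with $\Sigma$ the image of the zero section. By Theorem \ref{t.AA}, the form $\omega$ is the descent of $\omega^{C}$. By Definition \ref{d.invariant}, $\delta(L_y)=\Lambda_y$, the lattice represented by $\lambda(y)$. Hence $\delta$ descends to a fiber-preserving biholomorphism $\Phi\colon f^{-1}(C^o) \to X^\lambda$. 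It sends the zero section to the zero section, so $\Phi(\Sigma)=\Sigma^\lambda$. Holomorphicity of the descended map should follow from the local description of $X^{\lambda}$ as a quotient of $\nu^*T^*\sM^{\star}$ by a holomorphically varying lattice.

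The main step is the equality $\Phi^*\omega^\lambda=\omega$. By construction, $\omega^\lambda$ is induced by the standard form on $T^*\sM^{\star}$, pulled back to $\nu^*T^*\sM^{\star}$ along the natural map $\nu^*T^*\sM^{\star}\to T^*\sM^{\star}$. This map is a local biholomorphism because $\nu$ is unramified. So it suffices to check that the composite $T^*C^o \to T^*\sM^{\star}$ pulls back $\omega^{\sM^{\star}}$ to $\omega^{C}$. This composite is exactly the cotangent lift of the local biholomorphism $J$. It is a standard fact that cotangent lifts of local diffeomorphisms preserve the canonical forms. In local coordinates one can check it directly: take $s=J(z)$ on $\sM^{\star}$ and coordinates $(z,w)$, $(s,u)$ as in the proof of Lemma \ref{l.standard}. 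The covector $w\,{\rm d}z$ corresponds to $u\,{\rm d}s$ with $u=w/J'(z)$. Then
\[ {\rm d}s\wedge {\rm d}u = J'(z)\,{\rm d}z\wedge \frac{{\rm d}w}{J'(z)} = {\rm d}z\wedge{\rm d}w, \]
since the term ${\rm d}z\wedge {\rm d}z$ drops out.

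Both symplectic forms are descended from these forms on the covering spaces. Descent is well defined by Lemma \ref{l.standard}, because the forms are invariant under lattice translations. Hence the equality on the covers gives $\Phi^*\omega^\lambda=\omega$. This proves that $(\Phi,{\rm Id}_{C^o})$ is an isomorphism of symplectic elliptic fibrations.

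The step needing the most care is the bookkeeping of conventions. The definition of $\chi$ via $\omega(\vec v,\cdot)=f^*v$ might produce a sign relative to $\omega^{C}$. This is already absorbed into Theorem \ref{t.AA}, and the same convention is used implicitly in Definition \ref{d.mu}, so the identification is consistent.
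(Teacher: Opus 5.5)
Your proof is correct and follows the paper's own argument: you identify $X^{\lambda}$ with the quotient $\chi\colon T^*C^o \to f^{-1}(C^o)$ via ${\rm d}J$, and you use Theorem \ref{t.AA} for the symplectic forms. Your explicit check that the cotangent lift of $J$ pulls $\omega^{\sM^{\star}}$ back to $\omega^{C}$ fills in a step the paper leaves implicit. Any sign ambiguity in the convention defining $\chi$ would in any case be harmless, since composing with the fiberwise inversion of $X^{\lambda}$ fixes the zero section and negates $\omega^{\lambda}$.
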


\begin{proof}
Recall that $X^{\lambda}$ is constructed in Definition \ref{d.mu} as
 the quotient  of the total space of the line bundle $(\varpi \circ \lambda)^* (T^* \sM^{\star})$
on $C^o$  by a subbundle of lattices.
By the definition of the symplectic invariant $\lambda$, this quotient
 can be identified, via ${\rm d} J|_{C^o}$, with the quotient  $\chi\colon T^* C^o \to X^o = f^{-1}(C^o)$, which sends the zero section to $\Sigma \cap X^o$.
 Thus $(f^{\lambda}\colon X^{\lambda} \to C^o, \Sigma^{\lambda})$ and $(f^o\colon X^o \to C^o, \Sigma^o)$ are isomorphic as smooth elliptic fibrations.
 By Theorem \ref{t.AA}, this isomorphism sends $\omega^{\lambda}$ to $\omega|_{C^o}$.
 \end{proof}

We prove the following slight variation of Theorem \ref{t.lambda}.

\begin{theorem}\label{t.lambda*}
Let $(f\colon  X \to C, \Sigma, \omega)$ and $(\w{f}\colon  \w{X} \to \w{C}, \w{\Sigma}, \w{\omega})$ be two nonisotrivial symplectic elliptic fibrations.  Let $\lambda\colon  C^o \to \sL$ and $\w{\lambda}\colon  \w{C}^o \to \sL$ be their symplecto-functional invariants.  Suppose we have  Zariski-open subsets $C' \subset C^o$, $ \w{C}' \subset \w{C}^o$ and a biholomorphic map $\phi\colon C \to \w{C}$  such that  $$\phi(C') = \w{C}' \mbox{ and } \lambda|_{C'} = \w{\lambda} \circ \phi|_{C'}.$$ Then there exists a  biholomorphic map $\Phi\colon X \to \w{X}$ such that $(\phi, \Phi)$ defines an isomorphism from $(f\colon  X \to C, \Sigma, \omega)$ to $(\w{f}\colon  \w{X} \to \w{C}, \w{\Sigma}, \w{\omega})$. \end{theorem}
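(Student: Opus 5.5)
Over $C'$ we build the isomorphism directly from Lemma \ref{l.mu}. Since $\lambda|_{C'} = \w{\lambda}\circ\phi|_{C'}$, the symplectic elliptic fibration over $C'$ determined by $\lambda|_{C'}$ (Definition \ref{d.mu}) is identified, via $\phi$, with the one over $\w{C}'$ determined by $\w{\lambda}|_{\w{C}'}$. Indeed, both are fiberwise quotients of the same pullback of $T^*\sM^{\star}$ by the same lattices, with the same zero section and the same form $\omega^{\sM^{\star}}$. Lemma \ref{l.mu}, applied to both fibrations and restricted to $C'$ and $\w{C}'$, then yields a biholomorphic map
\[
\Phi'\colon f^{-1}(C') \to \w{f}^{-1}(\w{C}')
\]
over $\phi$. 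This map sends $\Sigma$ to $\w{\Sigma}$ and satisfies $\Phi'^*\w{\omega} = \omega$.

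Next we extend $\Phi'$ over $C\setminus C'$. Since $C'$ is Zariski-open, this set is discrete, so the question is local: near each point $y\in C\setminus C'$, with $\w{y}=\phi(y)$, we must extend $\Phi'$ from the punctured neighborhood $f^{-1}(\Delta^*)$ to $f^{-1}(\Delta)$. The plan is to use the classical fact that an elliptic fibration is determined by its functional and homological invariants together with relative minimality.
\begin{itemize}
\item[(1)] The functional invariants agree, $J = \w{J}\circ\phi$. This holds on $C'$ by $\varpi\circ\lambda = \w{\varpi}\circ\w{\lambda}$, and hence everywhere by continuity.
\item[(2)] The homological invariants agree on $\Delta^*$, because $\Phi'$ identifies the smooth fibrations over $\Delta^*$ and therefore the monodromies around $y$.
\item[(3)] The smooth parts over $\Delta^*$ are isomorphic via $\Phi'$. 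Both $X$ and $\w{X}$ restricted to $\Delta$ are relatively minimal smooth models; I assume relative minimality, which is implicit for symplectic fibrations since $K_X$ is trivial and so no $(-1)$-curves exist. Kodaira's theory, or the uniqueness of the relatively minimal model (the N\'eron model), then lets the isomorphism of the restrictions to $\Delta^*$ extend to an isomorphism over $\Delta$. Concretely, $\Phi'$ gives a bimeromorphic map $f^{-1}(\Delta)\dashrightarrow \w{f}^{-1}(\w{\Delta})$ over $\phi$. Between relatively minimal surfaces with trivial canonical bundle, such a map is biholomorphic: any curve contracted by a resolution would have to be a $(-1)$-curve.
\end{itemize}
To make the bimeromorphicity rigorous, I would show that $\Phi'$ extends meromorphically across the singular fiber. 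One way is via Weierstrass models: both fibrations with section have local Weierstrass equations, and since $\Phi'$ respects the group law and the section, it is given by a coordinate change $(x,y)\mapsto(u^2x,u^3y)$ with $u$ meromorphic on $\Delta^*$. The invariants $g_2$ and $g_3$ on both sides are holomorphic with minimal orders, and this forces $u$ to be a holomorphic unit. The map then extends.

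The extended map $\Phi$ is biholomorphic and satisfies $\Phi(\Sigma)=\w{\Sigma}$, since $\Sigma$ is the closure of $\Sigma\cap f^{-1}(C')$. It also satisfies $\Phi^*\w{\omega}=\omega$, because both sides are holomorphic forms that agree on a dense open set.

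The main obstacle is the extension across singular fibers, in particular controlling the unit $u$ (or the meromorphy of $\Phi'$) at fibers where $J\notin\sM^{\star}$ or where $J$ ramifies. I would handle this first via the Weierstrass-model argument, using that the minimal Weierstrass model is unique up to $u\in\sO^*$.
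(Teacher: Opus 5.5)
Your proof is correct. Its first step, building $\Phi'$ over $C'$ from Lemma \ref{l.mu}, is exactly the paper's, but you extend across $C\setminus C'$ by a different route.

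The paper argues globally. From $\Phi'$ it reads off that the functional invariants agree. Using surjectivity of $\pi_1(C',c_0)\to\pi_1(C^{\rm reg},c_0)$, it gets that the homological invariants agree. It then invokes the Friedman--Morgan theorem that an elliptic fibration with section is determined by these two invariants.

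You work locally at each of the discretely many punctures, using Weierstrass models. Preservation of the zero section makes $\Phi'$ of the form $(x,y)\mapsto(u^2x,u^3y)$ over $\Delta^*$. So $\w{a}\circ\phi=u^4a$ and $\w{b}\circ\phi=u^6b$, and the minimality condition of Lemma \ref{l.KS} (ii) on both sides forces ${\rm ord}_0u=0$.

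What your route buys:
\begin{itemize}
\item It is self-contained and extends the given $\Phi'$ itself, so compatibility with $\phi$, $\Sigma$ and the symplectic forms is automatic.
\item The cited classification theorem only produces \emph{some} isomorphism with the right invariants. One tacitly needs that it can be matched with $\Phi'$ over $C'$. This is true, since the section-preserving automorphisms over the identity of $C'$ are just $\pm 1$ for a nonisotrivial fibration, and these extend.
\item Your items (1) and (2) are never actually used and can be dropped. Nonisotriviality plays no role in your extension step.
\end{itemize}
The paper's route is shorter.

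One detail you should state explicitly: a priori $u$ is only holomorphic and nowhere vanishing on $\Delta^*$. It is not ``meromorphic on $\Delta^*$'' in any useful sense, so before counting orders you must exclude an essential singularity at $0$. This follows from $u^4=(\w{a}\circ\phi)/a$ or $u^6=(\w{b}\circ\phi)/b$; one of these has a nonzero denominator because $4a^3+27b^2\not\equiv 0$. With $m={\rm ord}_0u$ you then get ${\rm ord}_0(\w{a}\circ\phi)=4m+{\rm ord}_0a$ and ${\rm ord}_0(\w{b}\circ\phi)=6m+{\rm ord}_0b$, and minimality on both sides gives $m=0$. This is exactly where preservation of the section matters. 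Translation by a section with an essential singularity at the puncture is an isomorphism over $\Delta^*$ that does not extend.
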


\begin{proof}
 Let us  identify $C$ with $\w{C}$, $C'$ with $\w{C}'$ and $\lambda'\coloneqq\lambda|_{C'}$ with $\w{\lambda}'\coloneqq \w{\lambda}|_{\w{C}'}$ by  the biholomorphic map $\phi\colon C \to \w{C}$. Then both of the restrictions \begin{eqnarray*} (f'\colon  X' \to C', \Sigma', \omega') &\coloneqq& (f\colon  X \to C, \Sigma, \omega)|_{C'} \mbox{ and } \\  (\w{f}'\colon \w{X}' \to
 C', \w{\Sigma}', \w{\omega}') & \coloneqq &  (\w{f}\colon  \w{X} \to
 C, \w{\Sigma}, \w{\omega})|_{C'} \end{eqnarray*}   are  isomorphic to $(f^{\lambda'}\colon X^{\lambda'} \to C', \Sigma^{\lambda'}, \omega^{\lambda'})$ by Lemma \ref{l.mu}.
Thus we have a biholomorphic map $\Phi'\colon X' \to \w{X}'$ that descends to $\phi'\coloneqq \phi|_{C'}$ such that $\Phi' (\Sigma') = \w{\Sigma}'$ and $(\Phi')^* \w{\omega}' = \omega'$. This implies that the functional invariants of $(f\colon  X \to C, \Sigma, \omega)$ and $(\w{f}\colon  \w{X} \to \w{C}, \w{\Sigma}, \w{\omega})$ are identical and so are their homological invariants because  both of the homomorphisms
$$\pi_1(C', c_0) \to \pi_1(C^{\rm reg}, c_0) \mbox{ and } \pi_1(\w{C}', c_0) \to \pi_1(\w{C}^{\rm reg}, c_0)$$
with a base point $c_0 \in C'$ are surjective. Since an elliptic fibration is determined by its functional and homological invariants (see \cite[Theorem 3.14, Chapter I]{FM}, noting that an elliptic fibration in our sense is an elliptic fibration with a section in \cite{FM}), we can extend $\Phi'$ to a biholomorphism $\Phi\colon X \to \w{X}$.
Note that \(\Phi\) sends  \(\Sigma\) to  \(\w{\Sigma}\). Since the holomorphic 2-form \(\Phi^*\w{\omega}-\omega\) vanishes over a  nonempty open subset in $X$, it vanishes identically on \(X\). Thus $(\phi, \Phi)$ defines an isomorphism of the two symplectic elliptic fibrations.
\end{proof}

We have the following result on the relation between isogenies and symplecto-functional invariants.

\begin{proposition}\label{p.invfactor}
Let $(\psi, \Psi)$ be an isogeny from $(f\colon  X\to C, \Sigma, \omega)$ to $(f'\colon  X' \to C', \Sigma', \omega')$. Then their symplecto-functional invariants $\lambda\colon  C^o \to \sL$ and $\lambda'\colon C'^o \to \sL$ satisfy $$\psi(C^o) \subset C'^o \mbox{ and } \lambda = \lambda' \circ \psi|_{C^o}.$$ \end{proposition}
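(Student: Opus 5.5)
The plan is to prove both assertions first on the dense open set where the isogeny is an honest isomorphism, and then extend using that all the data are holomorphic. Let $U\subset C$ be the Zariski-open subset from Definition \ref{d.isogeny}, and let $J$, $J'$ be the functional invariants of $f$, $f'$.

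\textbf{Step 1 (functional invariants).} For $y\in U$, condition (c) of Definition \ref{d.isogeny} identifies $X_y$ with $X'_{\psi(y)}$ as elliptic curves. Hence $J=J'\circ\psi$ on $U$, and therefore on all of $C$ by the identity theorem.

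\textbf{Step 2 ($\psi(C^o)\subset C'^o$).} Fix $y\in C^o$ and set $y'=\psi(y)$.
\begin{itemize}
\item From $J(y)=J'(y')$ we get $J'(y')\in\sM^{\star}$.
\item From ${\rm mult}_yJ={\rm mult}_y\psi\cdot{\rm mult}_{y'}J'$ and ${\rm mult}_yJ=1$, both $\psi$ is unramified at $y$ and $J'$ is unramified at $y'$.
\item It remains to show that $X'_{y'}$ is smooth. Since $\psi$ is unramified at $y$, it maps a small disc $D\ni y$ biholomorphically onto a disc $D'\ni y'$. The set $D\setminus U$ is finite, and after shrinking $D$ we may assume $D\setminus U\subset\{y\}$. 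Over $D\setminus\{y\}$, the map $\Psi$ gives an isomorphism between $f|_D$ and $f'|_{D'}$ covering $\psi$. Hence the local monodromy of $f'$ around $y'$ equals that of $f$ around $y$. That monodromy is trivial because $X_y$ is smooth.
\item The fibration $f'$ is an elliptic fibration with a section and is relatively minimal (the standing convention, as in \cite{FM}). Its local monodromy at $y'$ is trivial and $J'$ is holomorphic at $y'$. By Kodaira's classification of singular fibers, $X'_{y'}$ is then of type ${\rm I}_0$, i.e. smooth.
\end{itemize}
This is the step I expect to need the most care. If relative minimality of $f'$ is not available, I would instead pass to the relatively minimal model using Theorem \ref{t.lambda*} and uniqueness of an elliptic fibration with given functional and homological invariants \cite[Theorem 3.14, Chapter I]{FM}.

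\textbf{Step 3 ($\lambda=\lambda'\circ\psi$).} Take $y\in U\cap C^o$. The isomorphism in (c) is symplectic, fibre-preserving over $\psi$, and sends $\Sigma$ to $\Sigma'$. Its differential therefore intertwines the Hamiltonian vector fields of Definition \ref{d.chi}: $f^*({\rm d}\psi^*v)$ corresponds to $f'^*v$. Consequently the transpose $({\rm d}_y\psi)^*\colon T^*_{y'}C'\to T^*_yC$ carries the lattice $\chi'^{-1}_{y'}(\Sigma'\cap X'_{y'})$ onto $\chi_y^{-1}(\Sigma\cap X_y)$. By Step 1 we have ${\rm d}_yJ={\rm d}_{y'}J'\circ{\rm d}_y\psi$. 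So the identifications $T^*_yC\cong T^*_{J(y)}\sM^{\star}$ and $T^*_{y'}C'\cong T^*_{J(y)}\sM^{\star}$ used in Definition \ref{d.invariant} are compatible with $({\rm d}_y\psi)^*$. This gives $\lambda(y)=\lambda'(\psi(y))$.

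\textbf{Step 4 (extension).} Both $\lambda$ and $\lambda'\circ\psi|_{C^o}$ are holomorphic maps $C^o\to\sL$; the second is well defined by Step 2. The target $\sL$ is a Hausdorff complex manifold. The set $C^o$ is a connected Zariski-open subset of $C$, and the two maps agree on the nonempty open subset $U\cap C^o$. By the identity theorem they agree on all of $C^o$.
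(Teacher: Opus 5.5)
Your proposal is correct and follows the paper's proof: you get $J=J'\circ\psi$ on $U$ and extend it, deduce $\psi(C^o)\subset C'^o$, compare the lattices on $U\cap C^o$ via ${\rm d}_yJ={\rm d}_{\psi(y)}J'\circ{\rm d}_y\psi$, and extend by the identity theorem. Your monodromy argument for the smoothness of $X'_{\psi(y)}$ fills in a point the paper passes over, since the conditions $J'(\psi(y))\in\sM^{\star}$ and ${\rm mult}_{\psi(y)}J'=1$ alone do not exclude an ${\rm I}_0^*$ fibre; also, the relative minimality you need there is automatic because $K_{X'}$ is trivial, so no fallback is required.
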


\begin{proof}
It is clear that the functional invariants $J\colon  C \to \overline{\sM}$ and $J'\colon C' \to \overline{\sM}$ are related by $J = J' \circ \psi$ over the Zariski-open subset \(U\) defined in Definition \ref{d.isogeny} (ii); hence \(J=J'\circ\psi\) on the whole \(C\).
Therefore, if $y \in C^o$, then $J'(\psi(y)) \in \sM^{\star}$ and $J'$ is unramified at $\psi(y)$.
Consequently, we have $\psi(y) \in C'^o$, proving  $\psi(C^o) \subset C'^o$.

To show  $\lambda = \lambda'\circ \psi|_{C^o}$, it is sufficient to prove \(\lambda(y)=\lambda'(\psi(y))\) for \(y\in C^o\cap U\).
We have   the unramified holomorphic maps \(\chi_y\colon T_y^*C\to X_y\) and  \(\chi_{\psi(y)}\colon T_{\psi(y)}^*C'\to X'_{\psi(y)}\) from Definition \ref{d.chi}.
The lattices \(\chi_y^{-1}(\Sigma\cap X_y)\) and \(\chi_{\psi(y)}^{-1}(\Sigma'\cap X'_{\psi(y)})\) are sent to the same element in \(\BL(T^*_{J(y)} \sM^{\star})=\BL(T^*_{J'(\psi(y))} \sM^{\star})\) via the linear isomorphisms \(T_y^*C\cong T_{J(y)}^*\sM^{\star}\cong T^*_{\psi(y)}C'\)  induced by ${\rm d}_y J$ and ${\rm d}_{\psi(y)} J'$, which is compatible with the isomorphism induced by \({\rm d}_y\psi\).
This implies that \(\lambda(y)=\lambda'(\psi(y))\) for each \(y\in U\cap C^o\).
\end{proof}

\section{Symplecto-elliptic reduction of a nonisotrivial symplectic elliptic fibration}\label{s.reduction}

The main result of this section is the construction of the symplecto-elliptic reduction of a nonisotrivial symplectic elliptic fibration $(f\colon  X \to C, \Sigma, \omega)$. The key point is an extension of the symplecto-functional invariant $\lambda\colon  C^o \to \sL$ to a holomorphic map $\psi\colon  C \to \h{C}$ to a Riemann surface $\h{C}$. For this extension, we need the following definition.

\begin{definition}\label{d.hB}
Let $(f\colon  X \to C, \Sigma, \omega)$ be a nonisotrivial symplectic elliptic fibration  and let
$\lambda\colon  C^o \to \sL$ be its
symplecto-functional invariant (see Definition \ref{d.invariant}).
We say that two points $y_1, y_2 \in C$ are $\lambda$-{\em equivalent} if for any  neighborhood $y_1 \in U_1 \subset C$ (resp. $y_2 \in U_2 \subset C$), there exists a  neighborhood $y_1 \in O_1 \subset U_1$ (resp. $y_2 \in O_2 \subset U_2$) such that $\lambda (O_1 \cap C^o) = \lambda (O_2 \cap C^o) \subset \sL$. Write  \begin{itemize} \item[(i)]  $\h{C}$ for the set of $\lambda$-equivalence classes of points in $C$; \item[(ii)]   $\lambda[y] \in \h{C}$ for the $\lambda$-equivalence class of a point $y \in C$; and \item[(iii)] $\h{C}^{\flat} \coloneqq  \{ \lambda[y] \in \h{C} \mid y\in C^o\} \subset \h{C}$. \end{itemize} \end{definition}

It is convenient to use the following terminology.

\begin{definition}\label{d.generic} A holomorphic map $h\colon Y \to Z$ between complex manifolds is {\em generically injective}
if for a nonempty Zariski-open subset $Y^{*} \subset Y$, the restriction $h|_{Y^{*}}$ is injective.
\end{definition}

\begin{proposition}\label{p.hB}
In Definition \ref{d.hB},
the set $\h{C}$ has a  natural Riemann surface structure such that  $\h{C}^{\flat} \subset \h{C}$ is a Zariski-open subset and there are factorizations of the symplecto-functional invariant $\lambda$ and the functional invariant $J: C \to J(C) \subset \overline{\sM}$ into products of  surjective holomorphic maps $$
C^o \ \stackrel{\xi^{\flat}}{\longrightarrow} \ \h{C}^{\flat}  \ \stackrel{\lambda^{\flat}}{\longrightarrow} \ \lambda(C^o)  \  \mbox{ and } \   C \ \stackrel{\xi}{\longrightarrow} \ \h{C} \ \stackrel{\h{j}}{\longrightarrow} \ J(C) $$
such that  $$\lambda = \lambda^{\flat} \circ \xi^{\flat}, \ J = \h{j} \circ \xi, \   \xi^{\flat} = \xi|_{C^o}$$ and $\lambda^{\flat}$ is generically injective.
\end{proposition}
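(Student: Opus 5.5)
We construct $\h{C}$ as a normalization, so that $\h{C}$ is the Riemann surface ``generated'' by the image curve $\lambda(C^o)\subset\sL$.

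\textbf{Step 1: $\lambda$ on $C^o$ is an immersion onto a curve.} Since $J|_{C^o}=\varpi\circ\lambda$ is unramified, $\lambda$ is an immersion. We claim $\lambda(C^o)$ is locally a finite union of embedded holomorphic discs in the surface $\sL$. Locally over a small disc $D\subset\sM^{\star}$, trivialize the $\C^*/\{\pm1\}$-bundle $\varpi$. Each local branch $\lambda|_{O}$ for $O\subset C^o$ small is then the graph of a holomorphic section of $\varpi$ over $J(O)$. Two such branches are either equal on a neighbourhood or meet in a discrete set, by the identity theorem. In the same way, $\lambda$-equivalence of points $y_1,y_2\in C^o$ just says that the germs of $\lambda$ at $y_1,y_2$ have the same image germ. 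Equivalently, some local biholomorphism $g$ with $g(y_1)=y_2$ satisfies $\lambda=\lambda\circ g$ near $y_1$, namely $g=(J|_{O_2})^{-1}\circ J|_{O_1}$. Set $\h{C}^{\flat}=C^o/\!\sim$ with charts given by the germs of $\varpi$. Put differently, $\h{C}^{\flat}$ is the space of germs of sections of $\varpi$ realized by $\lambda$, which is a sheaf-étalé-type space and hence Hausdorff because germs are analytic. This makes $\h{C}^{\flat}$ a Riemann surface. The quotient $\xi^{\flat}$ and the map $\lambda^{\flat}$ (germ $\mapsto$ its value) are holomorphic and satisfy $\lambda=\lambda^{\flat}\circ\xi^{\flat}$. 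Moreover $\lambda^{\flat}$ is injective away from the discrete set where distinct germs cross, giving generic injectivity. Connectedness of $\h{C}^{\flat}$ follows from that of $C^o$.

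\textbf{Step 2: extension across $C\setminus C^o$.} This is the main obstacle. The set $C\setminus C^o$ is discrete in $C$. Near a point $p\in C\setminus C^o$, take a punctured disc $D^*\subset C^o$ around $p$. The image $\xi^{\flat}(D^*)$ is a holomorphic image of a punctured disc, and $\hat j\circ\xi^{\flat}=J$ extends holomorphically to $p$. We need to show that $\xi^{\flat}|_{D^*}$ extends to a ``puncture'' of $\h{C}^{\flat}$, i.e. that $\h{C}^{\flat}$ can be completed by adding points so that these maps extend. The plan is as follows. The map $\hat j\colon\h{C}^{\flat}\to\overline{\sM}$ is a local biholomorphism (an immersion, since $\varpi\circ\lambda^{\flat}$ is). Via $J$, the map $\xi^{\flat}|_{D^*}$ factors through a finite cover of a punctured disc around $J(p)$, since $J$ is finite at $p$. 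The end of $\h{C}^{\flat}$ reached this way is therefore a quotient of a punctured disc by a finite group of deck transformations of $J|_{D^*}$ preserving $\lambda$, hence again a punctured disc. We fill in this puncture with one point. The required lemma is that $D^*$ maps to a single end, i.e. that the set of $g$ with $\lambda\circ g=\lambda$ is a subgroup of the finite cyclic deck group $\Z/m$, $m={\rm mult}_pJ$. The quotient of $D^*$ by that subgroup is a punctured disc mapping injectively onto its image in $\h{C}^{\flat}$.

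\textbf{Step 3: identification with $\lambda$-equivalence and conclusion.} Define $\h{C}$ as $\h{C}^{\flat}$ together with these filled punctures, identifying fillings coming from points $p,p'$ when the corresponding ends coincide. We then check that this agrees with $\lambda$-equivalence in Definition \ref{d.hB}. Indeed, $p\sim p'$ iff small neighbourhoods have equal $\lambda$-images, iff their ends in $\h{C}^{\flat}$ coincide. The resulting space is Hausdorff since distinct ends have disjoint small neighbourhoods. Then $\xi$ extends holomorphically by Riemann's removable singularity theorem, and $\hat j$ extends since $J=\hat j\circ\xi$ is holomorphic. Finally, $\h{C}^{\flat}$ is the complement of a discrete set, so it is Zariski-open. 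Surjectivity of $\xi$ and $\hat j$ onto $J(C)$ follows from the construction.
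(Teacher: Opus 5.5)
Your proposal follows essentially the same route as the paper. In both, the key local fact is this: near a point of $C$ where $J$ is a $k$-cyclic covering, $\xi$ is the quotient by the subgroup $H$ of deck transformations $g$ with $\lambda\circ g=\lambda$, and these quotients are the charts of $\h{C}$. The paper puts these charts directly on $C/\!\sim$ at every point of $C$. You instead first realize $\h{C}^{\flat}$ as the space of germs of sections of $\varpi$ realized by $\lambda$, and then fill in punctures. Your version gives Hausdorffness of $\h{C}^{\flat}$ for free; the paper never discusses separation.

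One step is not justified as written: Hausdorffness at an added point $\hat p$, for $p\in C\setminus C^o$. The argument that distinct ends have disjoint small neighbourhoods only separates added points from each other. You must also separate $\hat p$ from points $q\in\h{C}^{\flat}$ with $\h{j}(q)=J(p)$, which exist only when $J(p)\in\sM^{\star}$.

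Let $E_p\coloneqq\xi^{\flat}(D^*)$, and let $V\ni q$ be a neighbourhood on which $\h{j}$ is a homeomorphism onto a disc $W'\supset J(D)$. Since $\h{C}^{\flat}$ is Hausdorff, $E_p\cap V$ is open and closed in the connected set $E_p$. So either $E_p\cap V=\emptyset$, or $E_p$ is exactly a punctured neighbourhood of $q$. The latter means $\lambda|_{D^*}=s\circ J$ for the holomorphic section $s=\lambda^{\flat}\circ(\h{j}|_V)^{-1}$ of $\varpi$, and then adding $\hat p$ gives a non-Hausdorff space. Nothing in your Steps 1--3 excludes this, because it is a property of the fibration at $p$, not of the germ construction.

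The missing lemma is that $\lambda|_{D^*}$ never extends continuously into $\sL$ across $p\in C\setminus C^o$. If $J(p)\in\sM^{\star}$, then either $X_p$ is smooth and $J$ is ramified at $p$, or $X_p$ is of type $\textup{I}_0^*$.
\begin{itemize}
\item In the first case, Definition \ref{d.chi} applies to the smooth fibration over $D$. So the lattices $\chi_t^{-1}(\Sigma\cap X_t)\subset T^*_tC$ vary holomorphically over all of $D$. Transporting them to $T^*_{J(t)}\sM^{\star}$ multiplies them by $1/J'(t)\to\infty$ (in coordinates $t$ and $w$), so $\lambda(t)$ leaves every compact subset of $\sL$.
\item In the second case, $\lambda=s\circ J$ would, by Lemma \ref{l.mu}, make $(f,\Sigma)|_{D^*}$ the pullback of the smooth family over the disc $W'$ determined by $s$. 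That pullback has trivial monodromy around $p$, contradicting the monodromy $-1$ of $\textup{I}_0^*$.
\end{itemize}
With this lemma your Step 3 goes through. The paper's sketch is equally silent on this point.

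A minor remark on Step 1: ``locally a finite union of discs'' and ``the discrete set where distinct germs cross'' tacitly assume that $J$ has finite fibres, e.g.\ $C$ compact. In general, discard one member of each crossing pair, chosen via an exhaustion of $\h{C}^{\flat}$ by compact sets. This gives a closed discrete set off which $\lambda^{\flat}$ is injective.
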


\begin{proof}
Let $\xi\colon  C \to \h{C}$ be the surjective map sending $y \in C$ to $\lambda[y] \in \h{C}$ and set $\xi^{\flat} = \xi|_{C^o}. $
 The set $\h{C}$ has a natural topology which makes $\xi$ a continuous map, and $\h{C}^{\flat} \subset \h{C}$ is an open subset in this topology.
Two points $y_1, y_2 \in C$ satisfy $\lambda[y_1] = \lambda[y_2]$ only if $J(y_1) = J(y_2)$.
 Thus $J$ induces a surjective continuous map $\h{j}\colon \h{C} \to J(C)$ with $J = \h{j} \circ \xi$.
 Two points $y_1, y_2 \in C^o$ satisfy $\lambda[y_1] = \lambda[y_2]$  only if $\lambda(y_1) = \lambda(y_2)$.
 Thus $\lambda$ induces a surjective continuous map $\lambda^{\flat}\colon \h{C}^{\flat} \to \lambda(C^o)$ with $\lambda = \lambda^{\flat} \circ \xi^{\flat}$.

If a point $x \in \lambda(C^o)$ admits an open neighborhood $x \in U \subset\lambda(C^o)$, then $U$  is a locally closed submanifold of $\sL$ and all points of $\lambda^{-1}(x)$ have the same $\lambda$-equivalence class. Thus
$\lambda^{\flat}$ is generically injective.

Each point $ y  \in C$ admits a neighborhood $y \in O_y \subset C$ such that $J|_{O_y}\colon O_y \to J(O_y)$ is biholomorphic to a $k_y$-cyclic covering map of the unit discs $(z \in \Delta_1) \mapsto (z^{k_y} \in \Delta_2)$ for some positive integer $k_y \geq 1$.
Then the factorization $$O^o_y\coloneqq O_y \cap C^o \stackrel{\lambda|_{O_y^o}}{\longrightarrow} \lambda(O_y^o) \stackrel{\varpi|_{\lambda(O_y^o)}}{\longrightarrow} J(O_y^o)$$ is a factorization of the $k_y$-cyclic covering $O_y^o \to J(O_y^o)$ into two unramified coverings.
Thus $\lambda|_{O_y^o}$ should be biholomorphic to the restriction of an $m_y$-cyclic covering $(z \in \Delta_1) \mapsto (z^{m_y} \in \Delta_2)$ of the unit discs  for some $1 \leq m_y \leq k_y$. Then an open neighborhood of $\h{C}$ at $\lambda[y]$ can be identified with $\Delta_2,$ the $m_y$-cyclic quotient of $O_y,$ and inherits a complex structure from $O_y$.
For $y, y' \in C$ with $\lambda[y] = \lambda[y']$, it is clear that the complex structure inherited from the $m_y$-cyclic quotient of a neighborhood $O_y$ of $y$ and that inherited from the $m_{y'}$-cyclic quotient of a neighborhood of $y'$ give the same complex structure on $\xi(O_y) \cap \xi(O_{y'})$.
Thus we obtain a well-defined Riemann surface structure on $\h{C}$. It is clear that $\h{C}^{\flat} \subset \h{C}$ is Zariski-open and the maps $\xi, \lambda^{\flat}$ and $\h{j}$ are holomorphic with respect to the complex structures on $\h{C}$ and $\h{C}^{\flat}.$
 \end{proof}

To construct the symplecto-elliptic reduction, it is convenient to use the following notion from \cite[Definition 1.1]{Be} and \cite[Definition 1.3]{Fu}.

\begin{definition}\label{d.singular}
Let $Y$ be a normal variety. A  symplectic form  $\omega_{\rm sm}$ on the smooth locus $Y_{\rm sm}$ of $Y$ is  called a {\em symplectic form on } $Y$   if  for any desingularization \(\pi\colon W \to Y\) of $Y$,  the pullback of \(\omega_{\textup{sm}}\) to \(\pi^{-1}(Y_{\textup{sm}})\) extends to a holomorphic 2-form on \(W\). A normal variety equipped with a symplectic form is called a {\em symplectic variety}.
A desingularization $\pi\colon W \to Y$ of a symplectic variety $(Y, \omega_{\rm sm})$ is said to be a {\em symplectic resolution} if the extended holomorphic 2-form of the pullback of $\omega_{\rm sm}$ on $W$ is a symplectic form.
\end{definition}

The following proposition is borrowed from \cite[Propositions 1.3 and 2.4]{Be} and \cite[Proposition 1.1]{Fu}.

\begin{proposition}\label{p.Beauville}
\begin{enumerate}
\item[(i)] A symplectic variety is rational Gorenstein. In particular, the minimal resolution of a two-dimensional symplectic variety is a crepant resolution, which is a symplectic resolution.
\item[(ii)]  Let $(Y, \omega_{\rm sm})$ be a symplectic variety. Suppose that   a finite group $G$ acts on $Y$  preserving  $\omega_{\rm sm}$ on $Y_{\textup{sm}}$.
Then $\omega_{\rm sm}$ descends to a symplectic form on the quotient $Y/G$.
\end{enumerate}
\end{proposition}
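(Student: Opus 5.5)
This result is quoted from Beauville and Fu, so the plan is to reconstruct their arguments in the surface case, which is the only one we need. Throughout, $Y$ is normal with a symplectic form $\omega_{\rm sm}$ on $Y_{\rm sm}$.

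\emph{Part (i).} Fix a desingularization $\pi\colon W\to Y$.
\begin{enumerate}
\item[1.] Since $Y$ is normal, the nowhere vanishing section $\omega_{\rm sm}^{\wedge n}$ of $K_{Y_{\rm sm}}$ trivializes the reflexive sheaf $\omega_Y=\sO_Y(K_Y)$. Hence $K_Y$ is Cartier and linearly trivial, and in particular $Y$ is $1$-Gorenstein.
\item[2.] The pullback $\pi^*\omega_{\rm sm}^{\wedge n}$ extends holomorphically to $W$, because the extended $2$-form does. Therefore $K_W=\pi^*K_Y+E$ with $E\ge 0$ effective, i.e. every exceptional discrepancy is $\ge 0$, so $Y$ has canonical singularities.
\item[3.] Canonical Gorenstein singularities are rational (Elkik, or Kempf's criterion via Grauert--Riemenschneider). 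This gives rational Gorenstein.
\item[4.] In dimension two, rational Gorenstein singularities are exactly the ADE (Du Val) singularities. Their minimal resolution is crepant: $E=0$ and $K_W=\pi^*K_Y$.
\item[5.] Consequently the extended form $\tilde\omega$ has $\tilde\omega$ (which equals $\tilde\omega^{\wedge 1}$ when $n=1$) as a section of $K_W=\pi^*K_Y\cong\sO_W$ that is nowhere zero. So $\tilde\omega$ is nondegenerate everywhere, and the minimal resolution is symplectic.
\end{enumerate}

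\emph{Part (ii).} Set $Z=Y/G$ with quotient map $q$.
\begin{enumerate}
\item[1.] Outside the image of the singular locus and the fixed loci, $q$ is étale, so the $G$-invariant form $\omega_{\rm sm}$ descends to a form $\omega_Z$ on an open set whose complement has codimension $\ge 2$ once we remove the pseudo-reflection locus.
\item[2.] Pseudo-reflections cannot occur. A symplectic automorphism acts on the tangent space at a fixed point through ${\rm Sp}$, whose determinant is $1$, so no element fixes a divisor pointwise. Hence $q$ is étale in codimension one and $\omega_Z$ is a symplectic form on $Z_{\rm sm}$, at least after checking it stays nondegenerate at the smooth points of $Z$ lying over fixed points.
\item[3.] It remains to show extendability to a resolution $\pi\colon W\to Z$. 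Take a $G$-equivariant resolution $\tilde Y\to Y$ and form the normalization $\tilde W$ of $W$ in the function field of $\tilde Y$, with a resolution dominating $\tilde Y$ and mapping generically finitely onto $W$. The pullback of $\omega_Z$ to that space is the pullback of $\omega_{\rm sm}$, which is holomorphic by hypothesis.
\item[4.] A meromorphic $2$-form on $W$ whose pullback under a generically finite surjective map from a smooth variety is holomorphic is itself holomorphic. To see this, work locally near a general point of each exceptional divisor, where the map is a branched covering $z\mapsto z^k$; a pole of order $m$ downstairs gives a pole of order $km-(k-1)>0$ upstairs.
\end{enumerate}

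The main obstacle is step 4 of part (ii), the descent of holomorphicity through a ramified generically finite map; the local computation at general points of divisors handles it. In part (i), the step I would otherwise have to take on faith is the citation ``canonical Gorenstein $\Rightarrow$ rational''. For surfaces this can be replaced by the ADE classification.
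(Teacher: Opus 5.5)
Your argument is correct and is essentially the standard proof of Beauville \cite{Be} (Propositions 1.3 and 2.4), which the paper quotes without reproving. For (i) you use that $\omega^{\wedge n}$ trivializes $K_Y$ and extends to every resolution, so $Y$ is canonical, hence rational Gorenstein; in dimension two this means ADE, so the minimal resolution is crepant and the extended form is nondegenerate. For (ii) you use the absence of pseudo-reflections, Hartogs extension, and the fact that a rational form is regular when its pullback under a generically finite map is regular. The one check you leave open in (ii) is immediate. The degeneracy locus of $\omega_Z$ on $Z_{\rm sm}$ is the zero set of the section $\omega_Z^{\wedge n}$ of $K_{Z_{\rm sm}}$, so it is either empty or a divisor. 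It also lies in a subset of codimension at least two, so it must be empty.
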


Recall the following elementary lemma.

\begin{lemma}\label{l.id}
Let $(f\colon X \to C, \Sigma, \omega)$ be a symplectic elliptic fibration.
Let $(\phi\colon C \to C, \Phi\colon X \to X)$ be an automorphism, namely, a pair of biholomorphic automorphisms such that $f \circ \Phi = \phi \circ f, \Phi(\Sigma) = \Sigma$ and $ \Phi^* \omega = \omega.$ If $\phi = {\rm Id}_C$, then $\Phi = {\rm Id}_X$. \end{lemma}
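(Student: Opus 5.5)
The plan is to reduce to a statement on a single smooth fiber and then use the identity theorem. Since $f\circ\Phi = f$, the map $\Phi$ sends each fiber $X_y$ to itself. Let $C^{\rm reg}\subset C$ be the Zariski-open set of regular values; it is nonempty because smooth fibers exist. Since $X$ is connected and $\Phi$ is holomorphic, it suffices to prove $\Phi|_{f^{-1}(C^{\rm reg})} = {\rm Id}$: the set where $\Phi$ agrees with ${\rm Id}_X$ is then a closed analytic subset with nonempty interior, hence all of $X$.

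Next fix $y \in C^{\rm reg}$ and let $\sigma_y = \Sigma\cap X_y$. I would use the action-angle description from Definition \ref{d.chi}, restricted to a neighborhood of $y$ inside $C^{\rm reg}$, which is a smooth symplectic elliptic fibration. For $v\in T^*_yC$, the vector field $\vec v$ on $X_y$ is defined by $\omega(\vec v,\cdot) = f^*v$. We have $\Phi^*\omega=\omega$ and $f\circ\Phi = f$, so $\Phi^* f^*v = f^*v$. Hence $\Phi_*\vec v$ satisfies the same defining equation, and therefore $\Phi_*\vec v = \vec v$. So $\Phi|_{X_y}$ commutes with the flow of every $\vec v$, that is, with the $T^*_yC$-action on $X_y$. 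Since $\Phi(\Sigma)=\Sigma$ and $\Phi$ preserves fibers, we get $\Phi(\sigma_y)=\sigma_y$. Consequently $\Phi\circ\chi_y = \chi_y$, because both maps send $v$ to $v\cdot\sigma_y$. Since $\chi_y$ is surjective (it is an unramified covering onto the elliptic curve $X_y$), $\Phi|_{X_y}={\rm Id}$.

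Each step is routine. The only point needing care is the equivariance claim: that $\Phi$ intertwines the flows because it pushes $\vec v$ to $\vec v$. This follows because conjugating a flow by a diffeomorphism gives the flow of the pushed-forward field. Nondegeneracy of $\omega$ gives uniqueness of $\vec v$, so there is no real obstacle.
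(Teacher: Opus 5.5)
Your proof is correct, but it takes a different route from the paper's.

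The paper works only at the single point $s=\Sigma\cap X_t$. Since $f\circ\Phi=f$ and $\Phi(\Sigma)=\Sigma$, the map $\Phi$ is the identity on $\Sigma$. So $d_s\Phi$ has eigenvalue $1$ on $T_s\Sigma$ and some $c\in\C^*$ on $T_sX_t$. Preserving $\omega_s\in\wedge^2T_sX$ forces $c=1$. An automorphism of the elliptic curve $X_t$ that fixes the origin and has derivative $1$ there is the identity.

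You instead use the symplectic condition along the whole fiber. From $\Phi^*\omega=\omega$ and $f\circ\Phi=f$ you get $\Phi_*\vec v=\vec v$, so $\Phi|_{X_y}$ commutes with the transitive $T^*_yC$-action. Fixing $\sigma_y$ and using surjectivity of $\chi_y$ then gives $\Phi|_{X_y}={\rm Id}$.

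What each route buys:
\begin{itemize}
\item The paper's argument is shorter. It needs only linear algebra at one point plus the rigidity of elliptic-curve automorphisms fixing the origin.
\item Your argument avoids that rigidity fact and proves a stronger intermediate statement: any fiber-preserving symplectic automorphism over ${\rm Id}_C$ acts on each smooth fiber as a translation. This is essentially a converse to Lemma \ref{l.trans}. The section condition is then used only to make that translation trivial.
\end{itemize}
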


\begin{proof}
Let $X_t$ be a smooth fiber of $f$ and set $s := X_t \cap \Sigma$.  The induced map $\Phi_t \coloneqq \Phi|_{X_t}$ is an automorphism of the elliptic curve fixing \(s\). So ${\rm d}_s \Phi_t$ acts on $T_s X_t$ as a multiplication by a constant $c \in \mathbb{C}^*$. 
Then the linear automorphism ${\rm d}_{s} \Phi$ of $T_{s} X$ acts with the eigenvalue $1$ on $T_{s} \Sigma$ and $c$ on $T_{s} X_t$. Since it fixes $\omega_{s} \in \wedge^2 T_s X$, we see $c =1$. Thus $\Phi_t = {\rm Id}_{X_t}$ for any smooth fiber $X_t$.   It follows that $\Phi = {\rm Id}_X$. \end{proof}

\begin{proposition}\label{p.descent}
Let $d >1$ be a positive integer and let $\zeta \in \C$ be a primitive $d$-th root of unity.
Let $(f\colon   S \to \Delta, \Sigma, \omega)$  be a symplectic elliptic fibration over the unit disc $\Delta = \{ t \in \C \mid |t| <1\}$. Let $\sigma\colon  S \to S$ be a biholomorphic automorphism  satisfying $$\sigma(\Sigma) = \Sigma, \ \sigma^* \omega = \omega \mbox{ and } f(\sigma (x)) =  \zeta \cdot f(x) \mbox{ for all } x \in S.$$
Then \(\sigma\) is  of finite order \(d\) and  we have the following commutative diagram
\[
\xymatrix{
(S,\Sigma, \omega)\ar@/^2pc/@{-->}[rr]^-{\Psi}
\ar[r]^p \ar[d]_f&\widetilde{S}
\ar[d]_{\widetilde{f}}
&(S',\Sigma',\omega')\ar[l]_{p'}\ar[dl]^{f'}\\
\Delta\ar[r]^{\psi}&\Delta'&
}
\]
where \begin{itemize} \item $(f'\colon  S' \to \Delta', \Sigma', \omega')$ is a symplectic elliptic fibration over the unit disc $\Delta'$;   \item $p$ and $ \psi$ are the quotients by the cyclic group $\Z/d \Z$ generated by $\sigma$; \item $\w{f}$ is the induced map; and \item \( p'\) is the minimal resolution of the surface $\w{S}$, \end{itemize} such that $\psi$ and the induced dominant meromorphic map $\Psi\colon S \dasharrow S'$ define an isogeny from $(f\colon   S \to \Delta, \Sigma, \omega)$ to $(f'\colon  S' \to \Delta', \Sigma', \omega')$.
\end{proposition}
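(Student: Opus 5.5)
First I show that $\sigma$ has order exactly $d$. The automorphism $\sigma^d$ preserves $\Sigma$ and $\omega$, and it covers $t\mapsto \zeta^d t = t$, so it covers ${\rm Id}_\Delta$. By Lemma \ref{l.id}, $\sigma^d={\rm Id}_S$. For $0<k<d$ the power $\sigma^k$ covers multiplication by $\zeta^k\neq 1$, so $\sigma^k\neq{\rm Id}$. Hence $G\coloneqq\langle\sigma\rangle\cong\Z/d\Z$ acts on $S$ and on $\Delta$ compatibly with $f$. The quotient $\psi\colon\Delta\to\Delta'\coloneqq\Delta/G$ is $t\mapsto t^d$, which identifies $\Delta'$ with the unit disc. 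The quotient $p\colon S\to\w S\coloneqq S/G$ is a normal surface with finitely many quotient singularities, and $f$ induces $\w f\colon \w S\to\Delta'$.

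Next I put a symplectic structure on $\w S$. Since $\sigma^*\omega=\omega$, Proposition \ref{p.Beauville}(ii) applied to $(S,\omega)$ (smooth, so certainly a symplectic variety) shows that $\omega$ descends to a symplectic form $\w\omega_{\rm sm}$ on $\w S$. By Proposition \ref{p.Beauville}(i), the minimal resolution $p'\colon S'\to\w S$ is crepant and symplectic, so $\w\omega_{\rm sm}$ extends to a symplectic form $\omega'$ on $S'$. Set $f'\coloneqq\w f\circ p'$; this map is proper and surjective onto $\Delta'$. Over $\Delta'\setminus\{0\}$ the action of $G$ on $f^{-1}(\Delta\setminus\{0\})$ is free, because it is free on the base, so $\w S$ is smooth there. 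The general fibre of $f'$ is therefore a fibre of $f$, of genus $1$.

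The set $\Sigma$ is $G$-stable, so $p(\Sigma)\cong\Sigma/G\cong\Delta'$ is a section of $\w f$. I take $\Sigma'$ to be its strict transform in $S'$, which is a section of $f'$. One point needs care: $\Sigma'$ should meet the central fibre in a smooth point, i.e. with multiplicity one. This holds automatically because $\Sigma'\to\Delta'$ is biholomorphic, so $\Sigma'\cdot f'^{-1}(0)=1$. Thus $(f'\colon S'\to\Delta',\Sigma',\omega')$ is a symplectic elliptic fibration in the sense of Definition \ref{d.basic}.

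Finally I check the isogeny conditions with $U=\Delta\setminus\{0\}$ and $\Psi\coloneqq p'^{-1}\circ p$, which is a dominant meromorphic map.
\begin{itemize}
\item[(a)] $\psi$ is unramified on $U$.
\item[(b)] $p'$ is an isomorphism over $\Delta'\setminus\{0\}$, so $\Psi$ is holomorphic on $f^{-1}(U)$.
\item[(c)] The map $(f,\Psi)\colon f^{-1}(U)\to U\times_{\Delta'}S'$ is a morphism of smooth elliptic fibrations over $U$ that is an isomorphism on each fibre, hence an isomorphism. It sends $\Sigma$ to the pullback of $\Sigma'$, and it pulls back $\omega'$ to $\omega$ because $p^*\w\omega_{\rm sm}=\omega$.
\end{itemize}

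I expect the main obstacle to be the verification that $f'$ is an elliptic fibration in the required sense, with $S'$ smooth and the section meeting the central fibre correctly. The relative minimality condition is delicate: the minimal resolution might still contain $(-1)$-curves in the central fibre. If Definition \ref{d.basic} requires relative minimality, I would argue as follows. A fibre component $E$ of $f'$ satisfies $K_{S'}\cdot E=0$ because $K_{S'}$ is trivial, while a $(-1)$-curve satisfies $K\cdot E=-1$. So no $(-1)$-curves occur and relative minimality is automatic.
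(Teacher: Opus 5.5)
Your proposal is correct and follows the same route as the paper. Lemma~\ref{l.id} gives $\sigma^d={\rm Id}_S$; Proposition~\ref{p.Beauville}~(ii) and then (i) give the descended symplectic form on $\w S$ and its extension to a symplectic form on the minimal resolution; and the strict transform of $\Sigma$ serves as the section. Your additional checks are all sound and only make explicit what the paper leaves implicit: that the order is exactly $d$, that conditions (a)--(c) of Definition~\ref{d.isogeny} hold, and that $S'$ is relatively minimal because $K_{S'}$ is trivial.
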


\begin{proof}
 Since $\sigma^d$ induces the identity of the base $\Delta$ and fixes the section $\Sigma$, Lemma \ref{l.id}  implies that \(\sigma^d\) is identity on $S$.

By Proposition \ref{p.Beauville} (ii), the symplectic form \(\omega\) on \(S\) descends to a symplectic form on \(\widetilde{S}\) and its pullback  by $p'$ can be extended to a holomorphic 2-form \(\omega'\) on \(S'\).
By Proposition \ref{p.Beauville} (i), the minimal resolution $p'$ is a symplectic resolution and $\omega'$ is a symplectic form on $S'$. The proper image $\Sigma'$ of $\Sigma$ under $\Psi$ is clearly a section of $f'$.
\end{proof}

 \begin{theorem}\label{t.hX}
 Let $(f\colon  X \to C, \Sigma, \omega)$ be a nonisotrivial symplectic elliptic fibration.
 Let $\xi\colon  C \to \h{C}$, $\lambda^{\flat}\colon \h{C}^{\flat} \to \lambda(C^o)$, and $\h{j}\colon \h{C} \to J(C)$ be as in Proposition \ref{p.hB}. Then there exists a unique (up to isomorphisms) nonisotrivial symplectic elliptic fibration
 $(\h{f}\colon  \h{X} \to \h{C}, \h{\Sigma}, \h{\omega})$ such that
 \begin{itemize} \item[(i)] its functional invariant $\h{J}\colon \h{C} \to \overline{\sM}$ coincides with $\h{j}$; \item[(ii)] the domain $\h{C}^o$ of its symplecto-functional invariant $\h{\lambda}\colon  \h{C}^o \to \sL$ contains $\h{C}^{\flat}$ as a Zariski-open subset; \item[(iii)] $\lambda^{\flat} = \h{\lambda}|_{\h{C}^{\flat}}$;
 \item[(iv)]
 $\h{\lambda}\colon  \h{C}^o \to \sL$ is generically injective; and
  \item[(v)]
  there is  an  isogeny $(\xi, \Xi)$ from $(f\colon  X \to C, \Sigma, \omega)$ to $(\h{f}\colon  \h{X} \to \h{C}, \h{\Sigma}, \h{\omega}).$
\end{itemize}
\end{theorem}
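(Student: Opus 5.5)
We construct $(\h{f}\colon \h{X}\to\h{C},\h{\Sigma},\h{\omega})$ in two stages: first over the Zariski-open subset $\h{C}^{\flat}$ by means of Definition \ref{d.mu}, and then over each of the discrete points of $\h{C}\setminus\h{C}^{\flat}$ by descending local pieces of $X$ through the cyclic quotients of Proposition \ref{p.descent}. Uniqueness will follow from Theorem \ref{t.lambda*}.

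Over $\h{C}^{\flat}$, the map $\lambda^{\flat}\colon \h{C}^{\flat}\to\sL$ is holomorphic, and $\varpi\circ\lambda^{\flat}=\h{j}|_{\h{C}^{\flat}}$ is unramified. Indeed, $J=\h{j}\circ\xi$ is unramified on $C^o$ and $\xi^{\flat}$ is surjective, so $\h{j}$ is unramified on $\h{C}^{\flat}$. Definition \ref{d.mu} with $\mu=\lambda^{\flat}$ then produces a smooth symplectic elliptic fibration $(X^{\flat}\to\h{C}^{\flat},\Sigma^{\flat},\omega^{\flat})$. Its symplecto-functional invariant is $\lambda^{\flat}$ by construction, which gives (iii), and it is generically injective by Proposition \ref{p.hB}, which gives (iv) on $\h{C}^{\flat}$. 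Over $C^o$, Lemma \ref{l.mu} together with $\lambda=\lambda^{\flat}\circ\xi^{\flat}$ identifies $(f,\Sigma,\omega)|_{C^o}$ with $(\xi^{\flat})^*$ of this fibration, and $\xi^{\flat}$ is a local biholomorphism there. This already supplies the isogeny of (v) over a Zariski-open subset.

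To fill in a point $b=\lambda[y]\notin\h{C}^{\flat}$, we work in a disc $O_y$ around $y$ on which, as in the proof of Proposition \ref{p.hB}, $\xi|_{O_y}$ is the $m_y$-cyclic map $z\mapsto z^{m_y}$. Let $\sigma_0$ be a generator of the deck group of $O_y\cap C^o\to\xi(O_y)\cap\h{C}^{\flat}$. Since $\lambda\circ\sigma_0=\lambda$ on $O_y\cap C^o$, Lemma \ref{l.mu} gives an automorphism $\sigma$ of $(f,\Sigma,\omega)$ over $O_y\cap C^o$ lifting $\sigma_0$.

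The main obstacle is extending $\sigma$ across the central fiber $f^{-1}(y)$. Our first approach is to apply Theorem \ref{t.lambda*} to $(f,\Sigma,\omega)|_{O_y}$ and its pullback by $\sigma_0$: they have the same symplecto-functional invariant on $O_y\cap C^o$, hence are isomorphic over $\sigma_0$, and this isomorphism is the required extension of $\sigma$. Since $\lambda\circ\sigma_0=\lambda$ is the identity condition, we must check that Theorem \ref{t.lambda*} applies to the disc with $C'=O_y\cap C^o$. Its proof only used that the functional and homological invariants over the disc are determined by those over $C'$, via surjectivity of $\pi_1(C')\to\pi_1(O_y^{\rm reg})$, so it does.

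Proposition \ref{p.descent} then gives a symplectic elliptic fibration over the disc $\xi(O_y)$ together with an isogeny from $f^{-1}(O_y)$. Over the punctured part it agrees with $X^{\flat}$, because both are determined by $\lambda^{\flat}$ through Lemma \ref{l.mu} and Theorem \ref{t.lambda*}, so we glue along that identification. The identification is unique by Lemma \ref{l.id}, so the gluings are compatible and produce $\h{X}$. If another $y'$ has $\lambda[y']=b$, the resulting germ is the same by Theorem \ref{t.lambda*}.

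Properties (i) and (ii) follow from $J=\h{j}\circ\xi$ and from Proposition \ref{p.invfactor} applied to the isogeny $(\xi,\Xi)$, which is assembled from the local isogenies. Uniqueness follows because any $\h{X}$ satisfying (i)–(iii) has symplecto-functional invariant agreeing with $\lambda^{\flat}$ on $\h{C}^{\flat}$, and Theorem \ref{t.lambda*} with $\phi={\rm Id}$ then gives the isomorphism.
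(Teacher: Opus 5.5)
Your proposal is correct and follows the paper's proof essentially step for step: you build the fibration over $\h{C}^{\flat}$ from $\lambda^{\flat}$ via Definition \ref{d.mu}, lift the cyclic deck transformation of $\xi$ near each point of $\xi^{-1}(\h{C}\setminus\h{C}^{\flat})$ to a symplectic automorphism via Theorem \ref{t.lambda*}, descend with Proposition \ref{p.descent}, glue over the punctured discs, and get uniqueness from Theorem \ref{t.lambda*}. You leave two points implicit, both of which the paper also treats lightly: that the descended local fibration has invariant $\lambda^{\flat}$ on the punctured disc, which follows from Proposition \ref{p.invfactor} applied to the local isogeny; and the trivial case $m_y=1$, where Proposition \ref{p.descent} is not needed.
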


\begin{proof}
Since $\lambda^{\flat} \colon \h{C}^{\flat} \to \lambda(C^o) \subset \sL$ is unramified, Definition \ref{d.mu} gives the symplectic elliptic fibration $(f^{\lambda^{\flat}}\colon X^{\lambda^{\flat}} \to \h{C}^{\flat}, \Sigma^{\lambda^{\flat}}, \omega^{\lambda^{\flat}})$ determined by \(\lambda^{\flat}\).
By Lemma \ref{l.mu}, we have a canonical isogeny $(\xi^{\flat}, \Xi^{\flat})$ from $(f\colon  X \to C, \Sigma, \omega)|_{C^o}$ to $(f^{\lambda^{\flat}}\colon X^{\lambda^{\flat}} \to \h{C}^{\flat}, \Sigma^{\lambda^{\flat}}, \omega^{\lambda^{\flat}})$  via the \(\xi^{\flat} \)-pullback.

We would like to extend $(f^{\lambda^{\flat}}\colon  X^{\lambda^{\flat}} \to \h{C}^{\flat}, \Sigma^{\lambda^{\flat}}, \omega^{\lambda^{\flat}})$ to a symplectic elliptic fibration over $\h{C}$, together with an extension of the isogeny.

For each point $z \in \h{C} \setminus \h{C}^{\flat}$, choose a point $y \in \xi^{-1}(z)$. We can find a neighborhood $y \in U \subset C$ such that
 \begin{itemize}
 \item $U \setminus \{y\} \subset C^o$;
\item $U$ is biholomorphic to a unit disc $\Delta \subset \C$; and
\item $J\colon  U \to J(U)$ is equivalent to the cyclic covering  $(t \in \Delta) \mapsto (t^{k} \in \Delta)$ for some $k \geq 1$. \end{itemize}
Let us write
$$U^o\coloneqq U \cap C^o,  \ \h{U} \coloneqq \xi(U) \subset \h{C} \mbox{ and } \h{U}^{\flat} \coloneqq \h{U} \setminus \{z\}\subset \h{C}^{\flat}.$$
Then $\xi|_U \colon  U \to \xi(U)$ is equivalent to an $m$-cyclic covering for some $m \leq k$, namely,  there exists a biholomorphic automorphism  $v\colon U \to U$ of order $m$ such that $\xi|_U\colon  U \to \h{U}$ can be identified with the quotient of $U$ by the finite cyclic group generated by $v$. Then \(\lambda\) is \(v|_{U^o}\)-invariant, i.e., $\lambda \circ v|_{U^o} = \lambda|_{U^o}$.
By Theorem \ref{t.lambda}, the biholomorphic map $v$  can be lifted to an automorphism of $(f\colon  X \to C, \Sigma, \omega)|_U$.
Then we can apply Proposition \ref{p.descent}  to obtain  a symplectic elliptic fibration $(\h{f}_U\colon \h{X}_U \to \h{U}, \h{\Sigma}_U, \h{\omega}_U)$ equipped with an isogeny $(\xi_U, \Xi_U)$ from $(f\colon  X \to C, \Sigma, \omega)|_U.$
By Proposition \ref{p.invfactor}, its symplecto-functional invariant $\h{\lambda}_U\colon \h{U}^o \to \sL$ satisfies $$\h{U}^{\flat} \subset \h{U}^o, \ \lambda|_{U^o} = \h{\lambda}_U \circ \xi|_{U^o} \mbox{ and } \h{\lambda}_U|_{\h{U}^{\flat}} = \lambda^{\flat}|_{\h{U}^{\flat}},$$
where the last equality follows from the second-last equality and  \(\lambda=\lambda^{\flat}\circ\xi^{\flat}\) in Proposition \ref{p.hB}.
  The last equality and Theorem \ref{t.lambda*} imply that $(\h{f}_U\colon \h{X}_U \to \h{U}, \h{\Sigma}_U, \h{\omega}_U)|_{\h{U}^{\flat}}$ and $(f^{\lambda^{\flat}}\colon X^{\lambda^{\flat}} \to \h{C}^{\flat}, \Sigma^{\lambda^{\flat}}, \omega^{\lambda^{\flat}})|_{\h{U}^{\flat}}$ are isomorphic.
    Thus we can glue them together to extend \(f^{\lambda^{\flat}}\) to  a symplectic elliptic fibration over $\h{C}^{\flat} \cup \{z\}.$
  By Theorem \ref{t.lambda*}, this construction does not depend on the choice of $y \in \xi^{-1}(z)$ and $U$:   the construction from a different choice is related to the above one by a unique biholomorphic map. Repeatedly applying this construction for each $z \in  \h{C} \setminus \h{C}^{\flat}$, we obtain the symplectic elliptic fibration $(\h{f}\colon  \h{X} \to \h{C}, \h{\Sigma}, \h{\omega})$.

    The properties (i), (ii), and (iii) are from the construction. (iv) follows from  the fact that $\lambda^{\flat}$ is generically injective in Proposition \ref{p.hB}.
    To check (v), note that for each choice of \(z\in \h{C} \setminus \h{C}^{\flat}\) and   $y \in U$, the isogeny $(\xi_U, \Xi_U)|_{U^o}$ coincides with $(\xi^{\flat}, \Xi^{\flat})|_{U^o}$.
    So we can patch them to obtain the desired isogeny $(\xi, \Xi).$

    Finally, the uniqueness of $(\h{f}\colon  \h{X} \to \h{C}, \h{\Sigma}, \h{\omega})$ follows from (iii) and Theorem \ref{t.lambda*}. \end{proof}

Let us call $(\h{f}\colon  \h{X} \to \h{C}, \h{\Sigma}, \h{\omega})$, the symplecto-elliptic reduction of $(f\colon  X \to C, \Sigma, \omega)$ (see Definition \ref{d.reduction}).
The following proposition proves Theorem \ref{t.factorize}.

\begin{proposition}\label{p.reduction}
Let  $(f\colon  X \to C, \Sigma, \omega)$ (resp. $(f'\colon  X' \to C', \Sigma', \omega')$) be a nonisotrivial symplectic elliptic fibration and let $(\h{f}\colon  \h{X} \to \h{C}, \h{\Sigma}, \h{\omega})$ (resp. $(\h{f}'\colon \h{X}' \to \h{C}', \h{\Sigma}', \h{\omega}')$) be its symplecto-elliptic reduction, equipped with the isogeny $(\xi\colon  C \to \h{C}, \Xi\colon  X \dasharrow \h{X})$ (resp. $(\xi'\colon  C' \to \h{C}', \Xi'\colon  X' \dasharrow \h{X}')$).
If there is an isogeny $(\psi, \Psi)$ from $(f\colon  X \to C, \Sigma, \omega)$ to  $(f'\colon  X' \to C', \Sigma', \omega')$, then there is an isomorphism $(\phi\colon \h{C} \to  \h{C}', \Phi\colon \h{X} \to \h{X}')$ such that  $\phi\circ \xi = \xi' \circ \psi$ and $\Phi\circ \Xi = \Xi' \circ \Psi.$
\end{proposition}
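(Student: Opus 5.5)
The plan is to construct $\phi\colon \h{C} \to \h{C}'$ directly from the definition of $\lambda$-equivalence, and then use Theorem \ref{t.lambda*} to lift $\phi$ to $\Phi$. The key input is Proposition \ref{p.invfactor}, which gives $\psi(C^o) \subset C'^o$ and $\lambda = \lambda' \circ \psi|_{C^o}$. Here $\lambda$ and $\lambda'$ are the symplecto-functional invariants of $f$ and $f'$.

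\textbf{Step 1: $\xi'\circ\psi$ is constant on $\lambda$-classes.} Suppose $y_1, y_2 \in C$ are $\lambda$-equivalent, and let $U_i'$ be neighborhoods of $\psi(y_i)$. Since $\psi$ is continuous, the sets $\psi^{-1}(U_i')$ are neighborhoods of $y_i$. Choose $O_1, O_2$ with $\lambda(O_1\cap C^o)=\lambda(O_2\cap C^o)$, and take them small enough that $\psi|_{O_i}$ is a cyclic covering onto an open set $O_i' \subset U_i'$; this uses that $\psi$ is open. By the identity $\lambda = \lambda' \circ \psi$, we get $\lambda'(\psi(O_i\cap C^o)) = \lambda(O_i \cap C^o)$. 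Moreover $\psi(O_i\cap C^o)$ is $O_i'$ minus finitely many points, because $C\setminus C^o$ is discrete. Removing finitely many points from the domain changes the image under $\lambda'$ by at most finitely many points. The remaining images are open subsets of a curve in $\sL$, so equality up to finitely many points should suffice; if needed, I will shrink the neighborhoods slightly and argue via closures. This gives $\lambda'(O_1'\cap C'^o)=\lambda'(O_2'\cap C'^o)$, so $\psi(y_1)$ and $\psi(y_2)$ are $\lambda'$-equivalent. Hence $\xi'\circ\psi$ descends to a map $\phi\colon \h{C}\to\h{C}'$ with $\phi\circ\xi=\xi'\circ\psi$. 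The map $\phi$ is holomorphic because $\xi$ is locally a cyclic quotient, so holomorphic functions invariant under the deck group descend.

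\textbf{Step 2: $\phi$ is biholomorphic.} On the Zariski-open set $\h{C}^{\flat}$ we have
$$\h{\lambda}'\circ\phi\circ\xi^{\flat} = \lambda'\circ\psi|_{C^o} = \lambda = \h{\lambda}\circ\xi^{\flat},$$
so $\h{\lambda}'\circ\phi = \h{\lambda}$ on $\h{C}^{\flat}$, using Theorem \ref{t.hX}(iii). Injectivity of $\phi$ comes from the same reasoning as Step 1, run in reverse. If $\phi(z_1)=\phi(z_2)$ with $z_i=\xi(y_i)$, then $\psi(y_1)$ and $\psi(y_2)$ are $\lambda'$-equivalent. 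Pulling the neighborhoods back along $\psi$, and using $\lambda=\lambda'\circ\psi$, shows that $y_1$ and $y_2$ are $\lambda$-equivalent. Surjectivity holds because $\psi$ and $\xi'$ are surjective. A bijective holomorphic map of Riemann surfaces is biholomorphic.

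\textbf{Step 3: lifting $\phi$ and compatibility.} Set $C^*=\h{C}^{\flat}\cap\phi^{-1}(\h{C}'^{\flat})$, which is Zariski-open. On $C^*$ we have $\h{\lambda}=\h{\lambda}'\circ\phi$, so Theorem \ref{t.lambda*} produces $\Phi$ such that $(\phi,\Phi)$ is an isomorphism of the symplectic elliptic fibrations. The two compositions $\Phi\circ\Xi$ and $\Xi'\circ\Psi$ are isogenies over the same base map $\phi\circ\xi=\xi'\circ\psi$. Over a Zariski-open set, each of them identifies the source with the pullback fibration, respecting sections and symplectic forms. Comparing the two identifications gives an automorphism of the pulled-back symplectic elliptic fibration that covers the identity of the base. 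By Lemma \ref{l.id} this automorphism is trivial, so the two maps agree on a dense open set, hence as meromorphic maps.

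The main obstacle is Step 1: making the $\lambda$-equivalence neighborhoods compatible under $\psi$. The difficulty is that $\psi$ may ramify at $y_i$ and that $\psi(O_i\cap C^o)$ may miss finitely many points of $C'^o$. I will handle this by working only with the small punctured cyclic-covering neighborhoods provided by the proof of Proposition \ref{p.hB}.
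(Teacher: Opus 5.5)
Your proposal is correct and follows essentially the paper's own argument: Proposition \ref{p.invfactor} together with the openness and continuity of $\psi$ gives a bijection $\phi$ on $\lambda$-classes with $\phi\circ\xi=\xi'\circ\psi$, the identity $\h{\lambda}=\h{\lambda}'\circ\phi$ holds on the Zariski-open set $\xi(C^o)$, and Theorem \ref{t.lambda*} produces $\Phi$; you are more explicit than the paper about the compatibility $\Phi\circ\Xi=\Xi'\circ\Psi$ via Lemma \ref{l.id}. The paper also passes over the finite-point issue in your Step 1 silently, and ``equality up to finitely many points'' is not what $\lambda'$-equivalence requires; the issue disappears because $\psi^{-1}(C'^o)=C^o$, which gives $\psi(O_i\cap C^o)=\psi(O_i)\cap C'^o$ exactly (at unramified points this follows from $J=J'\circ\psi$ and the local monodromy, and at ramification points from Theorem \ref{t.isogeny}(i), since all target germs in Theorem \ref{t.quotient} are singular).
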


\begin{proof}
 The surjective holomorphic map $\psi\colon C \to C'$ sends an open subset in $C$ to an open subset in $C'$. Thus by Proposition \ref{p.invfactor}, we see that the $\psi$ sends a $\lambda$-equivalence class of points on $C$ to a $\lambda'$-equivalence class of points on $C'$ in the sense of Definition \ref{d.hB}, inducing a map $\phi\colon \h{C} \to \h{C}'$.
Since \(\psi\) is continuous, the $\psi$-inverse image of  a \(\lambda'\)-equivalence class of points on \(C'\) is  a \(\lambda\)-equivalence class of points on \(C\). Thus $\phi$ is bijective. This is clearly a biholomorphic map with the complex structures on $\h{C}$ and $\h{C}'$ in Proposition \ref{p.hB} and satisfies $\phi \circ \xi = \xi' \circ \psi$.
Let $\h{\lambda}\colon  \h{C}^o \to \sL$ and $\h{\lambda}'\colon \h{C}' \to \sL$ be the symplecto-functional invariants of the symplecto-elliptic reductions.
From the condition (c) of Definition \ref{d.isogeny}, we see that   $\h{\lambda}$ and  $\h{\lambda}' \circ \phi$ agree on a Zariski-open subset \(\xi(C^o)\) of $\h{C}^o$. Thus Theorem \ref{t.lambda*} gives an isomorphism $(\phi, \Phi)$ satisfying $\xi = \xi' \circ \psi$ and $\Xi = \Xi' \circ \Psi.$
\end{proof}

Let us close this section with the proof of Theorem \ref{t.K3}, an application to elliptic  K3 surfaces. The following lemma is straightforward.

\begin{lemma}\label{l.K3} Let $(f\colon  X \to C, \Sigma, \omega)$ be a nonisotrivial symplectic elliptic fibration with the symplecto-elliptic reduction $(\h{f}\colon  \h{X} \to \h{C}, \h{\Sigma}, \h{\omega}).$  Assume that $X$ is compact. Then both $C$ and $\h{C}$ are biholomorphic to the Riemann sphere $\BP^1$ and both $X$ and $\h{X}$ are K3 surfaces.
\end{lemma}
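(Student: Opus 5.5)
The plan is to show first that $X$ is a K3 surface and $C\cong\BP^1$, and then to transfer both properties to $\h{X}\to\h{C}$ through the isogeny $(\xi,\Xi)$ of Theorem \ref{t.hX}.

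\emph{Step 1: $X$ and $C$.} Since $X$ is compact and carries a nowhere vanishing 2-form, $K_X$ is trivial. A nonisotrivial elliptic fibration has nonconstant $J$, so it has singular fibers. Hence $e(X)=\sum e(X_y)>0$, and the fibration is relatively minimal, because a symplectic surface contains no $(-1)$-curve (adjunction gives $K_X\cdot E=-1\neq 0$). The canonical bundle formula $K_X=f^*(K_C\otimes \sL^{-1})$ with $\deg\sL=\chi(\sO_X)=e(X)/12>0$ combined with $K_X\cong\sO_X$ gives $2g(C)-2=\deg \sL... $; more precisely, $\deg(K_C\otimes\sL^{-1})=0$ (since $f^*$ of a line bundle of nonzero degree is nontrivial), so $2g(C)-2=\chi(\sO_X)>0$ is impossible unless we recheck signs. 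Rather than rely on that, I would use Noether: $12\chi(\sO_X)=e(X)>0$, so $\chi(\sO_X)\in\{1,2\}$ by the classification of surfaces with trivial $K$ (abelian surfaces have $e=0$), which forces $X$ to be K3 with $\chi=2$. An elliptic fibration over $C$ gives a surjection $H^1(C)\hookrightarrow H^1(X)=0$, so $g(C)=0$.

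\emph{Step 2: $\h{C}$.} The map $\xi\colon C\to\h{C}$ is a surjective holomorphic map from $\BP^1$. Hence $\h{C}$ is compact, and by Riemann--Hurwitz $g(\h{C})\le g(C)=0$, so $\h{C}\cong\BP^1$.

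\emph{Step 3: $\h{X}$.} $\h{X}$ is compact, since $\h f$ is proper over a compact base, and it carries the symplectic form $\h\omega$, so $K_{\h X}$ is trivial. Its functional invariant is $\h j$ with $J=\h j\circ\xi$, so $\h{f}$ is nonisotrivial and has a singular fiber, giving $e(\h X)>0$. The argument of Step 1 then applies verbatim and shows that $\h X$ is K3.

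The main obstacle is to be careful that the classification step is valid in the analytic category, i.e.\ that $\h X$ is a compact complex surface whose Enriques--Kodaira classification is available. This holds because $\h X$ is a proper fibration over a compact curve. With $K$ trivial, the classification lists only K3, complex tori, and primary Kodaira surfaces. The last two have $e=0$, and they are excluded by positivity of the Euler number, which in turn comes from the existence of singular fibers.
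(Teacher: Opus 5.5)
Your argument is correct and is the standard one the paper means when it calls this lemma ``straightforward'' (the paper writes no proof): triviality of $K$ together with $e>0$, coming from the singular fibers over $J^{-1}(\infty)$, forces K3; Riemann--Hurwitz applied to the nonconstant map $\xi$ gives $\h{C}\cong\BP^1$; and the same reasoning applies verbatim to the compact, nonisotrivial, symplectic $\h{X}$. The canonical-bundle route you abandoned also works, and avoids the classification, once the sign is corrected: $K_X\cong f^*(K_C\otimes N)$ with $\deg N=\chi(\sO_X)$, so $K_X\cong\sO_X$ gives $2g(C)-2+\chi(\sO_X)=0$, and then $\chi(\sO_X)=e(X)/12>0$ yields $g(C)=0$, $\chi(\sO_X)=2$, and hence $q(X)=0$ because $p_g=1$.
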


\begin{proof}[Proof of Theorem \ref{t.K3}]
Let $(\h{f}_i\colon  \h{X}_i \to \h{C}_i, \h{\Sigma}_i, \h{\omega}_i)$ be the symplecto-elliptic reduction of $(f_i\colon  X_i \to C_i, \Sigma_i, \omega_i)$ with \(i=1,2\).
By Lemma \ref{l.K3}, we have $C_1 \cong \BP^1 \cong C_2$. Then the functional invariants $J_i\colon  C_i \to \overline{\sM}$ are finite surjective holomorphic maps.
Let \(\lambda_i\) (resp.~\(\h{\lambda}_i\)) be the symplecto-functional invariants of \(f_i\) (resp.~\(\h{f}_i\)) with \(i=1,2\).
Then we can find a Zariski-open subset $\sM' \subset \sM^{\star}$ such that
\begin{itemize}
\item $C_i^*\coloneqq J_i^{-1}(\sM') \subset C_i^o$ for  \(i=1,2\); 
\item $\lambda_1(C_1^*)$ (resp.~$\lambda_2(C_2^*)$) is a  connected closed analytic subset  in  $\sL|_{\sM'} =\varpi^{-1}(\sM')$.
\end{itemize}
By the isomorphism $(\phi, \Phi)$ over $O_1 \subset C_1$ and $O_2 \subset C_2$, the intersection of the two connected closed analytic subsets $\lambda_1(C_1^*)$ and $\lambda_2(C_2^*)$ in $\varpi^{-1}(\sM')$ contains $\lambda_1 (O_1 \cap C_1^*) \subset \lambda_1(C_1^*)$ (resp. $\lambda_2(O_2 \cap C_2^*) \subset \lambda_2(C_2^*)$) which has nonempty interior in $\lambda_1(C_1^*)$ (resp. $\lambda_2(C_2^*)$).
It follows that $\lambda_1(C_1^*)= \lambda_2(C_2^*).$
But $\lambda_1(C_1^*)$ (resp. $\lambda_2(C_2^*)$) is  Zariski-open in $\h{\lambda}_1(\h{C}_1^o)$
(resp. $\h{\lambda}_2(\h{C}_2^{o})$). Since both $\h{\lambda}_1$ and $\h{\lambda}_2$ are generically injective and their images contain the same Zariski-open subset, they induce a biholomorphic map $\varphi\colon  \h{C}_1 \to \h{C}_2$  satisfying $\h{\lambda}_1 = \h{\lambda}_2 \circ \varphi$ on  a Zariski-open subset of $\h{C}_1$.
By Theorem \ref{t.lambda*}, we obtain an isomorphism between $(\h{f}_1\colon  \h{X}_1 \to \h{C}_1, \h{\Sigma}_1, \h{\omega}_1)$ and $(\h{f}_2\colon \h{X}_2 \to \h{C}_2, \h{\Sigma}_2, \h{\omega}_2)$, proving (i).  (ii) follows from the construction of the isomorphism.
(iii) is a consequence of (i) and (ii). \end{proof}

\section{Automorphisms of finite order of germs of symplectic elliptic fibrations}\label{s.auto}

To understand the local structure of isogenies of symplectic elliptic fibrations, we need to understand symplectic automorphisms of finite order of germs of symplectic elliptic fibrations. The goal of this section is to classify germs of elliptic fibrations admitting symplectic structures with automorphisms of finite order.

\begin{definition}\label{d.germ}
Let $(f\colon  X \to C, \Sigma)$ (resp. $(\w{f}\colon \w{X} \to \w{C}, \w{\Sigma}))$ be a relatively minimal elliptic fibration and let $J\colon  C \to \overline{\sM}$ (resp. $\w{J}\colon  \w{C} \to \overline{\sM}$) be its functional invariant. We say that the {\em germ} of  $(f\colon  X \to C, \Sigma)$ at a point $y \in C$ is {\em equivalent to the germ} of $(\w{f}\colon \w{X} \to \w{C}, \w{\Sigma})$ at a point $\w{y} \in \w{C}$, if  there are neighborhoods $y \in O \subset C$ and $\w{y} \in \w{O} \subset \w{C}$ such that $(f\colon  X \to C, \Sigma)|_O$ is isomorphic to $(\w{f}\colon  \w{X} \to \w{C}, \w{\Sigma})|_{\w{O}}$. \end{definition}

Recall the following well-known fact (for example, see \cite[Proposition VI.1.1]{Mi}).

\begin{lemma}\label{l.germ}
In Definition \ref{d.germ},  the germ of  $(f\colon  X \to C, \Sigma)$ at a point $y \in C$ is equivalent to the germ of $(\w{f}\colon \w{X} \to \w{C}, \w{\Sigma})$ at a point $\w{y} \in \w{C}$, if and only if \begin{itemize} \item the Kodaira's singular fiber types of $f$ at $y$ and  $\w{f}$ at $\w{y}$ agree;
\item  $J(y)=\w{J}(\w{y})$, namely, the values of the functional invariants agree;
and \item ${\rm mult}_y J = {\rm mult}_{\w{y}} \w{J},$ namely, the multiplicities of the functional invariants agree.
\end{itemize}  In particular, we can represent the germ-equivalence class of $(f\colon  X \to C, \Sigma)$ at $y \in C$ by the triple $(*,  J(y), {\rm mult}_y J)$ consisting of the Kodaira singular fiber type $*$ of $f^{-1}(y)$, the value of   $J(y) \in \C \cup \{ \infty \}$ and the integer  ${\rm mult}_y J$. \end{lemma}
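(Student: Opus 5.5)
The idea is to reduce the statement to the classification of germs of relatively minimal elliptic fibrations with a section by their local invariants: the local period map together with the local monodromy. Both directions follow from that classification.

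For the ``only if'' direction, suppose $(f\colon X\to C,\Sigma)|_O\cong(\w{f}\colon\w{X}\to\w{C},\w{\Sigma})|_{\w{O}}$ via $(\phi,\Phi)$. Then $\Phi$ maps $f^{-1}(y)$ onto $\w{f}^{-1}(\w{y})$, so the Kodaira fiber types agree. Smooth fibers are carried to isomorphic smooth fibers, so $J|_O=\w{J}\circ\phi$. Since $\phi$ is biholomorphic, this gives $J(y)=\w{J}(\w{y})$ and ${\rm mult}_yJ={\rm mult}_{\w{y}}\w{J}$.

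For the ``if'' direction, shrink $O$ and $\w{O}$ to discs $\Delta$ centered at $y$ and $\w{y}$, each with at most one singular fiber, at the center. By the hypothesis on the multiplicities $k={\rm mult}_yJ={\rm mult}_{\w{y}}\w{J}$, after a coordinate change on each disc both $J$ and $\w{J}$ become $t\mapsto J(y)+t^k$ (or $t\mapsto t^{-k}$ if $J(y)=\infty$). This gives a biholomorphism $\phi\colon O\to\w{O}$ with $J=\w{J}\circ\phi$. The homological invariant on the punctured disc is determined by the single local monodromy matrix, up to conjugacy in ${\rm SL}_2(\Z)$. Kodaira's table says the fiber type determines this conjugacy class, so the two homological invariants agree under $\phi$. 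Then \cite[Theorem 3.14, Chapter I]{FM}, which says an elliptic fibration with a section is determined by its functional and homological invariants, applied over the disc, produces the isomorphism of elliptic fibrations. Relative minimality is needed here so that the fibration is the unique minimal model over the disc.

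The main obstacle is that $\phi$ is not unique: it can be altered by a $k$-th root of unity. We must check that some choice makes the monodromy data compatible, including the sign ambiguity $\pm$ that distinguishes, say, ${\rm I}_n$ from ${\rm I}_n^*$. The fiber type resolves this. Once the period map lifts compatibly, the monodromy is pinned down by the lift of $J$ to the upper half plane together with the type. Concretely, I would normalize each germ to a local Weierstrass model $y^2=x^3+a(t)x+b(t)$. Fiber type and $J$ fix $({\rm ord}\,a,{\rm ord}\,b,{\rm ord}\,\Delta)$ and $a^3/\Delta$ up to the coordinate change. A rescaling $(x,y)\mapsto(u^2x,u^3y)$ by a unit $u(t)$, which has a $6$th root since the disc is simply connected, then matches the two models. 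This is the standard argument of \cite[Proposition VI.1.1]{Mi}, which the paper cites.
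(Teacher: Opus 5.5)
Your proposal is correct. The paper gives no proof of this lemma; it quotes it from \cite[Proposition VI.1.1]{Mi}. Your final paragraph is the standard argument behind that citation: normalize both germs to $y^2=x^3+a(t)x+b(t)$ and match them by $(x,y)\mapsto(u^2x,u^3y)$.

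Your primary route through \cite[Theorem 3.14, Chapter I]{FM}, the tool the paper uses in Theorem \ref{t.lambda*}, is a genuine alternative. As written, though, it leaves the key point implicit. That theorem needs the homological invariants to agree compatibly with the period map, not merely to have conjugate local monodromies. Conjugacy does suffice here, for the following reasons.

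\begin{itemize}
\item If $J$ is nonconstant, choose a base point with $J\notin\{0,1,\infty\}$ and a common branch of the period there. Both monodromies then lie over the same element of ${\rm PSL}_2(\Z)$, so $\w{M}=\pm M$. No Kodaira monodromy is ${\rm SL}_2(\Z)$-conjugate to its negative. For ${\rm III}$ versus ${\rm III}^*$ the trace does not detect this; you need that $\left(\begin{smallmatrix}0&1\\-1&0\end{smallmatrix}\right)$ is not ${\rm SL}_2(\Z)$-conjugate to its inverse.
\item If $J$ is constant with value $0$ or $1$, the ${\rm PSL}_2(\Z)$-monodromy is not determined by $J$ at all. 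Both monodromies lie in the finite cyclic stabilizer of the constant period, and distinct elements of that stabilizer are pairwise non-conjugate.
\end{itemize}

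Your concern about altering $\phi$ by a $k$-th root of unity is unnecessary. A rotation preserves $J$ and acts trivially on $\pi_1$ of the punctured disc, so any $\phi$ with $J=\w{J}\circ\phi$ works.

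The Weierstrass route avoids all of this and is more elementary.
\begin{itemize}
\item By \cite[Table VI.1.2]{Mi}, the fiber type and ${\rm mult}_yJ$ determine ${\rm ord}\,a$ and ${\rm ord}\,b$. Hence $\w{a}\circ\phi=\alpha a$ and $\w{b}\circ\phi=\beta b$ for units $\alpha,\beta$.
\item The relation $J=\w{J}\circ\phi$ gives $\alpha^3=\beta^2$.
\item Then $u=(\beta/\alpha)^{1/2}$ satisfies $u^4=\alpha$ and $u^6=\beta$ simultaneously. A bare sixth root of $\beta$ would still need adjusting by a root of unity to also give $u^4=\alpha$. If $a\equiv 0$ or $b\equiv 0$, a sixth or fourth root alone suffices.
\item The resulting isomorphism of Weierstrass models extends to the relatively minimal models by Lemma \ref{l.Ktrivial}, and it preserves the zero section.
\end{itemize}
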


We have the following classification result.

\begin{theorem}\label{t.auto}
Let $(f\colon  S \to \Delta, \Sigma, \omega)$ be a symplectic elliptic fibration over the unit disc.  Fix a positive integer $d>1$ and let $\zeta \in \C$ be a primitive $d$-th root of unity.
Assume that there exists a biholomorphic automorphism $\sigma\colon  S\to S$ of finite order $d>1$ such that
$$ f \circ \sigma (y)  =  \zeta \cdot f(y), \ \sigma(\Sigma) = \Sigma  \ \mbox{ and } \sigma^* \omega = \omega.$$ Then the   triple $(*,   J(0), {\rm mult}_0 J)$ for the germ of $(f\colon  S \to \Delta, \Sigma)$ at $0$ and the value of $d$ must be one of the following. Here,  $k$ stands for a positive integer or $\infty$, except $k \neq \infty$ in (7), and  ${\rm mult}_0 J = \infty$ means that $f$ is isotrivial.
\begin{itemize}
\item[(1)] $(\textup{I}_{0}, J(0), 2k), d =2,$ where $J(0)\neq 0,1,\infty$;
\item[(2)] $(\textup{I}_0, 0,  6k), d= 2$;
\item[(3)] $(\textup{I}_0, 0, 9k-6), d= 3 $;
\item[(4)] $(\textup{I}_0, 0, 18k-6), d=6$;
\item[(5)] $(\textup{I}_0, 1,4k), d=2$;
\item[(6)] $(\textup{I}_0, 1, 8k-4), d= 4$;
\item[(7)]  $(\textup{I}_{2k}, \infty,  2k), d= 2$;
\item[(8)] $(\textup{I}_0^*, 0, 9k-3), d=3$;
\item[(9)] $(\textup{I}_0^*, 1, 4k-2), d=2$;
\item[(10)] $(\textup{II}, 0, 15k-5), d=5$;
\item[(11)] $(\textup{III}, 1, 6k-3), d= 3$;
\item[(12)] $(\textup{IV}, 0, 6k-4), d=2$;
\item[(13)] $(\textup{IV}, 0, 12k-4), d=4$;
\item[(14)] $(\textup{IV}^*, 0, 6k-2), d=2$.
\end{itemize}
\end{theorem}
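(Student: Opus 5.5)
The plan is to put $f$ into a local Weierstrass normal form on which $\sigma$ acts diagonally, use invariance of $\omega$ to pin down the weights of that action, and then read off the admissible Kodaira types, $J(0)$ and ${\rm mult}_0 J$ from Tate's algorithm.

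\emph{Step 1: local weights.} After shrinking $\Delta$, write $S$ over $\Delta$ as the minimal resolution of a Weierstrass model
\[
y^2=x^3+a(t)x+b(t),
\]
with $\Sigma$ the zero section at infinity. The coefficients $a,b$ are holomorphic and $(\mathrm{ord}_0 a, \mathrm{ord}_0 b)$ is minimal. Since $\sigma$ preserves $\Sigma$, it preserves the Weierstrass model, because the model is built from $f_*\mathcal{O}(n\Sigma)$. Such an automorphism covering $t\mapsto \zeta t$ has the form $(x,y,t)\mapsto (\alpha(t)^2x+\beta(t),\ \alpha(t)^3y+\dots,\ \zeta t)$. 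I will average over the finite cyclic group $\langle\sigma\rangle$ (using Lemma \ref{l.id} to know it has order $d$) and use the freedom of coordinate changes $x\mapsto u^2x+r$, $y\mapsto u^3y$ in the Weierstrass form. This should make $\sigma$ linear:
\[
\sigma(x,y,t)=(c^2x,\ c^3y,\ \zeta t), \qquad c\in\C^* .
\]
Invariance of the equation then gives
\[
a(\zeta t)=c^4a(t), \qquad b(\zeta t)=c^6b(t).
\]

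\emph{Step 2: invariance of $\omega$ fixes $c$.} Any symplectic form is $\omega=g(t)\,dt\wedge dx/y$ with $g(0)\neq 0$. Pulling back by $\sigma$ multiplies it by $\zeta c^{-1}g(\zeta t)/g(t)$. Evaluating at $t=0$ forces $c=\zeta$, after possibly composing with the inversion $-1$, which changes $c$ by a sign. The same conclusion comes from the determinant argument: $d\sigma$ acts by $\zeta$ on $T_s\Sigma$, so it must act by $\zeta^{-1}$ on the vertical direction.

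Hence $a$ involves only monomials $t^n$ with $n\equiv 4 \pmod d$, and $b$ only monomials with $n\equiv 6\pmod d$. Conversely, any such pair $(a,b)$ together with this $\sigma$ gives an example.

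\emph{Step 3: enumeration.} I will run through $d$ and the possible minimal pairs $(\mathrm{ord}_0 a,\mathrm{ord}_0 b)$ allowed by these congruences, with $\mathrm{ord}_0 a<4$ or $\mathrm{ord}_0 b<6$. For each, I read off the Kodaira type from the orders of $a$, $b$ and $\Delta=4a^3+27b^2$. I compute $J=4a^3/\Delta$ and ${\rm mult}_0 J$, which is the vanishing order of $J$, of $J-1\propto b^2/\Delta$, or of the pole, according to the value of $J(0)$.

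Here is a sample case. For $d=5$, $b$ has terms only in degrees $1,6,11,\dots$ and $a$ only in degrees $4,9,\dots$. Then $\mathrm{ord}_0 b=1$, so the fiber is of type II with $J(0)=0$, and
\[
{\rm mult}_0 J=3(4+5m)-2=15k-5 .
\]
For $d$ with no admissible small orders, minimality fails or the fiber is smooth with $J$ constrained, and these cases either die or land among (1)--(6). For $\textup{I}_n$ fibers, $\mathrm{ord}\,a=\mathrm{ord}\,b=0$ forces $d\mid 4$ and $d\mid 6$, so $d=2$. Moreover $\mathrm{ord}\,\Delta$ must be even, since $\Delta(\zeta t)=c^{12}\Delta(t)$ with $c^{12}=1$ for $d=2$; this gives (7).

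Isotrivial germs correspond to $a\equiv0$ or $b\equiv0$ or $a^3/b^2$ constant, and they are covered by $k=\infty$. For $\textup{I}_n$ this is impossible, which explains the exclusion $k\neq\infty$ in (7).

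\emph{Main obstacle.} I expect the hardest part to be Step 1, namely justifying that $\sigma$ can be simultaneously linearized in Weierstrass coordinates on a small disc. I would handle it by averaging: the translation part $\beta(t)$ is removed by replacing $x$ with a $\sigma$-eigen-combination, and $\alpha(t)$ is normalized to a constant by solving a cohomological equation over the finite group. After that comes the bookkeeping in Step 3. There the point is to check that every surviving combination appears in the list with exactly the stated arithmetic progression of multiplicities, and that cases such as types $\textup{II}^*$, $\textup{III}^*$ and $\textup{I}_n^*$ with $n>0$ are excluded by the congruences.
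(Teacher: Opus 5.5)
Your proposal is correct and follows essentially the paper's route: you use the Weierstrass model with $\sigma$ acting by $(\zeta t, u^2x, u^3y)$, symplectic invariance forces ${\rm ord}_0 a\equiv 4$ and ${\rm ord}_0 b\equiv 6 \pmod d$, and you finish with the Kodaira table, organized by $d$ where the paper organizes by fiber type. The linearization you flag as the main obstacle is unnecessary, because the paper keeps $u(t)$ non-constant and only uses $\zeta\, h(\zeta t)=u(t)h(t)$ as $t\to 0$, which is enough since only leading orders matter. Likewise, evaluating at $t=0$ already forces $c=\zeta$, so no composition with $-1$ is needed; such a composition would in fact break $\sigma^*\omega=\omega$.
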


For the proof, we need to recall the notion of the Weierstrass model of an elliptic fibration (see \cite[Section 1.4.1]{FM} or \cite[Lecture II]{Mi}).

\begin{definition}\label{d.Weierstrass}
Let $(f\colon  X \to C, \Sigma)$ be a relatively minimal elliptic fibration.  All components of singular fibers of $f$ disjoint from $\Sigma$ can be contracted to give a fibration $\overline{f}\colon  \overline{X} \to C$ of a singular surface $\overline{X}$ with a section $\overline{\Sigma} \subset \overline{X}$ such that all fibers of $\overline{f}$ are irreducible curves of arithmetic genus 1. This fibration is called the {\em Weierstrass model} of $(f\colon  X \to C, \Sigma)$. \end{definition}

    The following lemma is well-known (see \cite[Theorem 4.1 in Section 1.4.1]{FM} or \cite[Lemma II.5.3]{Mi}).

    \begin{lemma}\label{l.Weierstrass}
    In Definition \ref{d.Weierstrass}, let $L$ be the line bundle on $C$ induced by the conormal bundle $N^*_{\Sigma \subset X}$ on $\Sigma$  via $f|_{\Sigma}\colon \Sigma \cong C$ and let $\W\coloneqq L^{\otimes 2} \oplus L^{\otimes 3} \oplus \sO_C$, a vector bundle on $C$.   \begin{itemize}
    \item[(i)]
There is a pair $(\alpha, \beta)$ of holomorphic sections $\alpha \in H^0(C, L^{\otimes 4})$ and $\beta \in H^0(C, L^{\otimes 6})$ such that
the Weierstrass model $(\overline{f}\colon  \overline{X} \to C, \overline{\Sigma})$ is isomorphic, as a fibration of a surface with a section, to the surface  in $\BP \W$
defined by the equation $$ y^2 z = x^3 + \alpha xz^2 + \beta z^3,$$ with the fibration given by the natural projection to $C$ and the section defined by $x=z=0$.
\item[(ii)] Any two pairs $(\alpha,\beta)$ and $(\alpha', \beta')$ of sections of $L^{\otimes 4}$ and $L^{\otimes 6}$ with the property in (i) are related by $$(\alpha', \beta') = (g^4 \cdot \alpha, g^6 \cdot \beta)$$ for some nowhere vanishing holomorphic function $g$ on $C$.\end{itemize}
    \end{lemma}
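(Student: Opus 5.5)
The plan is to construct the Weierstrass equation locally on $C$ and then glue, using the fact that the normalized (short) Weierstrass form is rigid up to the scalings $(x,y)\mapsto(g^2x,g^3y)$. The key input is the behaviour of the sheaves $f_*\sO_X(n\Sigma)$. Since $f$ is relatively minimal and has a section, every fiber $F$ is reduced along $s=\Sigma\cap F$ and has trivial dualizing sheaf. Hence $h^1(F,\sO_F(ns))=h^0(F,\sO_F(-ns))=0$ and $h^0(F,\sO_F(ns))=n$ for $n\ge 1$. By cohomology and base change, $f_*\sO_X(n\Sigma)$ is locally free of rank $n$ and $R^1f_*\sO_X(n\Sigma)=0$ for $n\geq 1$. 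Pushing forward
\[
0\to \sO_X((n-1)\Sigma)\to \sO_X(n\Sigma)\to \sO_\Sigma(n\Sigma)\to 0
\]
gives, for $n\ge 2$, short exact sequences $0\to f_*\sO_X((n-1)\Sigma)\to f_*\sO_X(n\Sigma)\to L^{-n}\to 0$, using $\sO_\Sigma(\Sigma)=N_{\Sigma\subset X}\cong L^{-1}$. For $n=1$ we have $f_*\sO_X(\Sigma)=\sO_C$.

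First I will work over a small disc $O\subset C$ on which $L$ is trivialized by a local frame. There I lift the generators of $L^{-2}$ and $L^{-3}$ to sections $x\in f_*\sO_X(2\Sigma)(O)$ and $y\in f_*\sO_X(3\Sigma)(O)$. The seven sections $1,x,y,x^2,xy,x^3,y^2$ of the rank $6$ bundle $f_*\sO_X(6\Sigma)$ are linearly dependent, and after rescaling the leading coefficients match. This gives a relation $y^2+a_1xy+a_3y=x^3+a_2x^2+a_4x+a_6$ with $a_i$ holomorphic on $O$. The map $(x:y:1)$ contracts exactly the fiber components disjoint from $\Sigma$, because on each fiber it is given by the complete linear system $|3s|$. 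It therefore realizes $\overline{f}^{-1}(O)$ as this cubic in $\BP^2\times O$, with $\overline{\Sigma}$ corresponding to $x=z=0$. In characteristic $0$ we complete the square in $y$ and the cube in $x$. This yields the normal form $y^2=x^3+\alpha_O x+\beta_O$ with $\alpha_O,\beta_O$ holomorphic on $O$.

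The central step is rigidity of this normal form. Any other choice $(x',y')$ yielding the same normalized shape must satisfy $x'=u^2x+r$ and $y'=u^3y+sx+t$, since $x'$ and $y'$ are again lifts of local generators of $L^{-2}$ and $L^{-3}$, with $u$ a unit on $O$. Requiring the absence of $x'y'$, $y'$ and $x'^2$ terms forces $s=t=r=0$. Hence $(x',y')=(u^2x,u^3y)$ and $(\alpha',\beta')=(u^4\alpha,u^6\beta)$. Here $u$ is exactly the change of trivialization of $L$, because $x$ and $y$ reduce to generators of $L^{-2}$ and $L^{-3}$ along $\Sigma$. Consequently, on overlaps $O_i\cap O_j$ the local normalized coordinates transform by $g_{ij}^2$ and $g_{ij}^3$, where $g_{ij}$ are the transition functions of $L$. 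So $x$ and $y$ glue to coordinates on $\BP\W$ with $\W=L^{\otimes2}\oplus L^{\otimes3}\oplus\sO_C$, and $\alpha_O,\beta_O$ glue to global sections $\alpha\in H^0(C,L^{\otimes4})$ and $\beta\in H^0(C,L^{\otimes6})$. This proves (i).

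For (ii), two pairs both realizing the Weierstrass model give two global normalized coordinate systems. The same rigidity computation applies, now globally, with $u$ a nowhere vanishing function $g$ on $C$, because both systems trivialize the same $L$. This yields $(\alpha',\beta')=(g^4\alpha,g^6\beta)$. I expect the main obstacle to be the base-change and contraction step. One must check that $h^0(F,\sO_F(3s))=3$ and that $|3s|$ is base point free and contracts precisely the components missing $s$ on every singular Kodaira fiber. For this I would rely on the fact that fibers of relatively minimal elliptic fibrations with a section are reduced along $\Sigma$ with $\omega_F\cong\sO_F$. Once that is in place, the normalization and gluing are formal.
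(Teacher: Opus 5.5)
Your proposal is correct and is essentially the standard argument of \cite{Mi} (Lecture II) and \cite{FM} (\S I.4): the filtration of $f_*\sO_X(n\Sigma)$ with graded pieces $L^{-n}$, the embedding by $|3\Sigma|$ onto a normal cubic, and rigidity of the short normal form. The paper itself only cites these references for this lemma and gives no proof of its own. The one step you state a little quickly is where the units $u^2, u^3$ come from. The lifting argument only gives $x'=cx+r$ and $y'=c'y+sx+t$ with units $c,c'$; comparing the $x^3$ and $y^2$ coefficients forces $c'^2=c^3$, so $u:=c'/c$ satisfies $u^2=c$ and $u^3=c'$. In (ii) this makes $g=c'/c$ a global nowhere vanishing function on $C$ without extracting any roots.
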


The following is well-known (see \cite[Sections 5.10 and  5.13]{SS}).

\begin{lemma}\label{l.KS}
Let $(f\colon  S \to \Delta, \Sigma)$ be a relatively minimal elliptic fibration over the unit disc $\Delta = \{ t \in \C \mid |t| <1\}$.
We can trivialize the line bundle $L$ from Lemma \ref{l.Weierstrass} and  regard $\BP\W$ as $\BP^2 \times \Delta$. By this trivialization, we can write $(\alpha, \beta)$ in Lemma \ref{l.Weierstrass} (i) as a pair of holomorphic functions $(a(t), b(t))$ on $\Delta$ so that the Weierstrass model is embedded in $\BP^2 \times \Delta$ as the surface $$y^2 =  x^3 + a(t) x + b(t)$$ in a relative affine coordinate $(x,y)$ on $\BP^2$. Then the following holds.
\begin{itemize}
 \item[(i)]  The functional invariant $J\colon  \Delta \to \BP^1$ of the elliptic fibration $f\colon  S \to \Delta$ satisfies $$J(t) = \frac{4 a(t)^3}{4 a(t)^3 + 27 b(t)^2}.$$
      \item[(ii)] For each point $y \in \Delta$, either  ${\rm ord}_y a(t) <4$ or  ${\rm ord}_y b(t) <6$.
      \item[(iii)] The meromorphic 1-form $\frac{{\rm d} x \wedge {\rm d} t}{y}$ gives a symplectic form on the smooth locus of the surface $y^2 = x^3 + a(t) x + b(t)$ in  $\BP^2 \times \Delta$, which can be lifted to a symplectic form on $S$.
      \item[(iv)] The isomorphism type of the germ of $f$ at $y$ (equivalently, the triple $(*, J(y), {\rm mult}_y J)$) gives further restrictions on ${\rm ord}_y a$ and ${\rm ord}_y b$, which are listed in \cite[Table VI.1.2]{Mi} or \cite[Table 5.1 in Section 5.9]{SS}. \end{itemize}
      \end{lemma}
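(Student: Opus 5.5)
The plan is to verify the four items in turn, working throughout with the local Weierstrass equation $F(x,y,t)\coloneqq y^2 - x^3 - a(t)x - b(t) = 0$ on $\BP^2\times\Delta$, which is available by Lemma \ref{l.Weierstrass} (i) once $L$ is trivialized over the contractible disc $\Delta$.

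For (i), I would compute the fiberwise $\jmath$-invariant. For fixed $t$ with $4a(t)^3+27b(t)^2\neq 0$, the classical formula for the cubic $y^2=x^3+ax+b$ gives $\jmath = 1728\cdot 4a^3/(4a^3+27b^2)$. Since $\overline{\sM}$ is normalized by $\frac{1}{1728}\jmath$, this gives the stated formula for $J$ on the Zariski-open set of smooth fibers. The identity then extends holomorphically to all of $\Delta$ as maps to $\BP^1$. I would also note that, by Lemma \ref{l.Weierstrass} (ii), replacing $(a,b)$ by $(g^4a,g^6b)$ leaves the expression unchanged, so $J$ does not depend on the trivialization.

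For (ii), I would argue by contradiction. Suppose ${\rm ord}_y a\ge 4$ and ${\rm ord}_y b\ge 6$. In a coordinate $s$ centered at $y$, the substitution $x=s^2x'$, $y=s^3y'$ turns the equation into $y'^2=x'^3+(a/s^4)x'+b/s^6$. The singularity of the original surface at $(x,y,s)=(0,0,0)$ would then not be a rational double point: it is a minimal elliptic, non-canonical singularity, with $\delta$ invariant too large. On the other hand, the Weierstrass model of Definition \ref{d.Weierstrass} is obtained from the relatively minimal $S$ by contracting the components of fibers disjoint from $\Sigma$. These components form ADE configurations of $(-2)$-curves (Kodaira's classification), so the contraction has only rational double points, which is a contradiction. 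Equivalently, one can invoke the minimality statement contained in \cite[Lemma II.5.3]{Mi}.

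For (iii), I would treat $\frac{{\rm d}x\wedge{\rm d}t}{y}$ as a Poincaré residue of $\frac{{\rm d}x\wedge {\rm d}y\wedge{\rm d}t}{F}$.
\begin{itemize}
\item Differentiating $F$ gives $2y\,{\rm d}y=(3x^2+a)\,{\rm d}x+(a'x+b')\,{\rm d}t$ on the surface. Hence $\frac{{\rm d}x\wedge{\rm d}t}{y}=\frac{2\,{\rm d}y\wedge{\rm d}t}{3x^2+a}$ wherever $3x^2+a\ne0$, and the form is regular and nonvanishing at every point where $(\p_x F,\p_y F)\neq 0$.
\item Where both of these partials vanish but $\p_t F\neq 0$, the analogous expressions using ${\rm d}x\wedge{\rm d}y$ show the same.
\item Near the section at infinity $x=z=0$, I would check regularity and nonvanishing directly in the chart $y=1$, which is a short computation.
\end{itemize}
This yields a symplectic form on the smooth locus. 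Since the singularities are rational double points by (ii), Proposition \ref{p.Beauville} (i) (or the direct fact that ADE singularities are canonical with crepant minimal resolution) shows that the form lifts to a symplectic form on the minimal resolution. That minimal resolution is $S$ itself, because $S$ is relatively minimal and contains no $(-1)$-curves.

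For (iv), there is nothing to prove beyond Tate's algorithm, so I would cite \cite[Table VI.1.2]{Mi} or \cite[Table 5.1]{SS}, combined with (i) to relate ${\rm ord}\,a$ and ${\rm ord}\,b$ to $J(y)$ and ${\rm mult}_y J$.

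I expect the main obstacle to be making the contradiction in (ii) precise, namely identifying the singularity that arises when both orders are large as non-rational. If this becomes delicate, the fallback is to cite the minimality criterion in \cite{Mi} rather than reprove it.
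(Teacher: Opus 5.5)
Your proposal is correct. The paper itself gives no argument for this lemma: it calls it well known and points to \cite[Sections 5.10 and 5.13]{SS} and the tables of \cite{Mi}. Your route is therefore different, in that you verify (i)--(iii) directly.
\begin{itemize}
\item For (i), you use the classical $\jmath$-formula for $y^2=x^3+ax+b$. Two holomorphic maps $\Delta\to\BP^1$ that agree off the discrete discriminant locus coincide, and $4a^3+27b^2\not\equiv 0$ because the general fiber is smooth.
\item For (ii), you set the non-canonical singularity that a non-minimal equation would produce at the origin against a known fact. The Weierstrass model is the contraction of ADE configurations of $(-2)$-curves, so it has only rational double points.
\item For (iii), you use the Poincar\'e residue of ${\rm d}x\wedge{\rm d}y\wedge{\rm d}t/F$, which is $\frac{{\rm d}x\wedge{\rm d}t}{y}$ up to a harmless factor $\pm 2$. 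You then use crepancy of the contraction $S\to\overline{S}$.
\end{itemize}
The citation buys brevity. Your version makes the lemma self-contained and shows the mechanism behind (iii). This is the same crepant-resolution principle the paper later invokes through Proposition \ref{p.Beauville} in Proposition \ref{p.descent}.

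Two small points need tightening.

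In (ii), the singularity of $y^2=x^3+a(s)x+b(s)$ with ${\rm ord}\,a\ge 4$ and ${\rm ord}\,b\ge 6$ is not minimally elliptic in general. For instance, $y^2=x^3+s^{12}$ has geometric genus $2$. All you need, and what is true, is that it is not a rational double point. This follows in either of two ways:
\begin{itemize}
\item Take the weighted blow-up with weights $(2,3,1)$ on $(x,y,s)$. Its exceptional curve has discrepancy $2+3+1-1-6=-1<0$.
\item Use the classical double-cover criterion. The triple point of the branch curve $x^3+a(s)x+b(s)=0$ has an infinitely near triple point: setting $x=sx_1$, the strict transform $x_1^3+(a/s^2)x_1+b/s^3$ still has multiplicity $3$.
\end{itemize}

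In (iii), the rational-double-point property does not come from (ii). It comes from the contraction description you used to prove (ii). In fact the lift to $S$ needs nothing more: $K_S\cdot E_i=0$ for each contracted $(-2)$-curve $E_i$, and negative definiteness then forces $K_S=\pi^*K_{\overline S}$.
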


The following simple lemma is useful in the proof of Theorem \ref{t.auto}.

\begin{lemma}\label{l.lift}
In Lemma \ref{l.Weierstrass}, let $\sigma$ be an automorphism of an elliptic fibration $(f\colon  X \to C, \Sigma)$. Then we can lift it to an automorphism of the projective bundle $\BP \W \to C,$   which preserves the surface $y^2z = x^3+ \alpha xz^2 + \beta z^3$ such that the induced automorphism of the Weierstrass model $(\overline{f}\colon  \overline{X} \to C, \overline{\Sigma})$
agrees with the one coming from the contraction $X \to \overline{X}.$\end{lemma}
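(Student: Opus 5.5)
The plan is to use the uniqueness of the Weierstrass model and its construction from the relatively minimal surface. The contraction $c\colon X \to \overline{X}$ of Definition \ref{d.Weierstrass} collapses exactly the components of singular fibers disjoint from $\Sigma$. Since $\sigma$ commutes with $f$ up to the base automorphism $\phi$ and preserves $\Sigma$, it permutes these components. Hence $\sigma$ descends to a biholomorphic automorphism $\overline{\sigma}$ of $\overline{X}$ with $\overline{\sigma}(\overline{\Sigma}) = \overline{\Sigma}$ and $\overline{f}\circ\overline{\sigma} = \phi\circ \overline{f}$. This descent is just the normality of $\overline{X}$ plus Riemann extension across the finitely many contracted points.

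Next, $\sigma$ preserves $\Sigma$ and induces $\phi$ on $C \cong \Sigma$. Therefore ${\rm d}\sigma$ gives a bundle isomorphism $N^*_{\Sigma\subset X} \to \phi^* N^*_{\Sigma\subset X}$, that is, a linearization $\ell\colon L \to \phi^*L$. Recall the standard construction of the embedding in Lemma \ref{l.Weierstrass}: $x$ and $y$ are the relative sections of $\overline{f}_*\sO(2\overline{\Sigma})$ and $\overline{f}_*\sO(3\overline{\Sigma})$ with prescribed polar parts. Equivalently, $\overline{f}_*\sO(3\overline{\Sigma}) \cong \sO_C\oplus L^{-2}\oplus L^{-3}$ after normalizing (completing the square and the cube). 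The automorphism $\overline{\sigma}$ acts on $\overline{f}_*\sO_{\overline{X}}(n\overline{\Sigma})$ compatibly with $\phi$, because $\overline{\sigma}^{-1}(\overline{\Sigma}) = \overline{\Sigma}$. This gives a $\phi$-linear automorphism of the dual bundle $\W$ (up to the twist), and hence an automorphism $\h\sigma$ of $\BP\W$ over $\phi$.

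I then check two things. First, by construction the embedding $\overline{X} \hookrightarrow \BP\W$ is given by the complete relative linear system $|3\overline{\Sigma}|$, so it is $\overline{\sigma}$-equivariant. Therefore $\h\sigma$ preserves the image surface, and on it $\h\sigma$ agrees with $\overline{\sigma}$, which is the automorphism induced from $X$ via $c$. Second, in the normalized coordinates the pullback equation differs from the original only by the rescaling $(\alpha,\beta) \mapsto (g^4\alpha, g^6\beta)$ of Lemma \ref{l.Weierstrass} (ii), applied to the $\phi$-pulled-back sections. Concretely, $\h\sigma$ acts by $x \mapsto g^2 x$ and $y \mapsto g^3 y$ up to the linearization $\ell$. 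This is consistent, because any automorphism of a Weierstrass cubic fixing the flex point at infinity must have this form. That is the one place where a computation is needed: equivariance of the normalization.

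The main obstacle is making the normalization choice equivariant. Locally $x$ and $y$ are determined only up to $x \mapsto u^2x + r$ and $y \mapsto u^3 y + s x + t$. The shifts $r, s, t$ are forced to vanish by the requirement that there be no $y$-linear and no $x^2$ terms, a condition that $\overline{\sigma}$ respects. What remains is the scalar $u$, which is governed by the action on $L$. If this becomes delicate, I would instead work locally on $C$ over small $\phi$-compatible opens and glue, using the uniqueness up to $g$ from Lemma \ref{l.Weierstrass} (ii).
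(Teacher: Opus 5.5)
Your argument is correct, but it reaches the lift by a different route from the paper's.

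\textbf{The paper's route.} The paper builds the lift from the action of $\sigma$ on $L$ alone. That action induces a diagonal automorphism $\sigma'$ of $\BP\W$, which carries the Weierstrass surface to another surface with coefficients $(\alpha',\beta')$. Lemma \ref{l.Weierstrass} (ii) then supplies a fiberwise rescaling $\varphi$ over ${\rm Id}_C$ such that $\varphi\circ\sigma'$ preserves $\overline{X}$.

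\textbf{Your route.} You first descend $\sigma$ to $\overline{\sigma}$ on the normal surface $\overline{X}$. You then lift $\overline{\sigma}$ through the induced action on $\overline{f}_*\sO(3\overline{\Sigma})$, i.e.\ by functoriality of the relative $|3\overline{\Sigma}|$-embedding.

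\textbf{What each route buys.} Your route gives agreement with the automorphism coming from $X\to\overline{X}$ for free, by equivariance. In the paper's route, that final assertion tacitly needs the right choice of $g$. Indeed, $g$ in Lemma \ref{l.Weierstrass} (ii) is determined only up to a root of unity $u$ with $u^4\alpha=\alpha$ and $u^6\beta=\beta$. A wrong choice yields $\overline{\sigma}$ composed with a fiberwise automorphism such as $(x,y)\mapsto(x,-y)$. On the other hand, the paper's route exhibits the lift directly as a scaling $(x,y)\mapsto(u^2x,u^3y)$ over the base map. This is exactly the form used in the proof of Theorem \ref{t.auto}, and you recover it only through your normalization discussion.

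\textbf{The ``main obstacle'' is not needed.} Your normalization discussion is not required for the lemma itself. The canonical lift on $\BP\bigl((\overline{f}_*\sO(3\overline{\Sigma}))^{\vee}\bigr)$, transported through whatever identification with $\BP\W$ defines the embedding, already preserves the image surface, however $x$ and $y$ were normalized. The one point you should record is that any embedding as in Lemma \ref{l.Weierstrass} (i) is the relative $|3\overline{\Sigma}|$-embedding. This holds because the hyperplane $z=0$ cuts each fiber in three times the flex point on $\overline{\Sigma}$.
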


\begin{proof}
As $\sigma$ preserves $\Sigma \subset X$, it acts naturally on the line bundle $L$. Thus it induces an automorphism   $\sigma'\colon \BP\W  \to \BP\W,$ which  sends the surface $\overline{X} = (y^2z = x^3+ \alpha xz^2 + \beta z^3)$  to another surface $\overline{X}'= (y^2z = x^3+ \alpha' xz^2 + \beta' z^3)$ by Lemma \ref{l.Weierstrass} (ii).   Then we can find a projective bundle automorphism $\varphi$ of  $\BP \W$ which induces the identity on the base $C$ such that $\varphi(\overline{X}') = \overline{X}$. Then $\varphi \circ \sigma'$ is an automorphism of $\BP \W$ preserving the surface $\overline{X}$ and its restriction on $\overline{X}$ is the automorphism induced by $\sigma$ via the contraction $X \to \overline{X}$. \end{proof}

\begin{proof}[Proof of Theorem \ref{t.auto}]
Let us use the notation of Lemma \ref{l.KS} and use Lemma \ref{l.lift} to consider the automorphism induced by $\sigma$ on the Weierstrass model of $f\colon  S \to \Delta$. We use the same symbol $\sigma$ to denote the automorphism on $\BP^2 \times \Delta$ preserving the surface $y^2 = x^3 + a(t) x + b(t)$.  Then \(\sigma\) sends \((t,x,y)\) to \((\zeta \cdot t,u(t)^2x,u(t)^3y)\) for some \(\zeta\in\mathbb{C}^*\), and some holomorphic function \(u(t)\) (see e.g. \cite[Page 42, (3.8)]{SS}). Therefore, for any \((t,x,y)\in \overline{S}\) we have
\[
u(t)^6y^2=u(t)^6x^3+a(\zeta \cdot t)u(t)^2x+b(\zeta \cdot t).
\]
This forces that
\begin{equation}\label{eq-isom-type}
a(\zeta\cdot t)=u(t)^4a(t),~b(\zeta\cdot t)=u(t)^6b(t).
\end{equation}
Let $\overline{\omega}$ be the symplectic form on the smooth locus of $\overline{X}$ induced by $\omega$ so that $\sigma^* \overline{\omega} = \overline{\omega}.$
By Lemma \ref{l.KS} (iii),  we can write
\[\overline{\omega}=h(t)\cdot \frac{{\rm d}x\wedge {\rm d}t}{y}\]
for some nowhere vanishing holomorphic function \(h(t)\).
Then
$$
h(\zeta\cdot t)\cdot \frac{\zeta\cdot u(t)^2}{u(t)^3}\frac{{\rm d}x\wedge {\rm d}t}{y}=\sigma^*\overline{\omega}=\overline{\omega}=h(t)\cdot \frac{{\rm d}x\wedge {\rm d}t}{y}.
$$
 This implies that
\begin{equation}\label{eq-symplec-condition-equi}
\zeta\cdot h(\zeta \cdot t)=u(t)\cdot h(t).
\end{equation}
Combining \cref{eq-isom-type} and \cref{eq-symplec-condition-equi}, we have \begin{equation}\label{eq.ab}
a(\zeta \cdot t) = \zeta^{4} \frac{ h(\zeta \cdot t)^4}{h(t)^4} a(t) \mbox{ and } b(\zeta \cdot t) = \zeta^{6} \frac{h(\zeta  \cdot t)^6}{h(t)^6} b(t). \end{equation} In particular, \begin{equation}\label{eq.limit} \mbox{ either } a(t) \equiv 0 \mbox{ or } \zeta^4 = \lim_{t\to 0} \frac{a(\zeta \cdot t)}{a(t)} \end{equation}  and \begin{equation}\label{eq.limitb}  \mbox{ either } b(t) \equiv 0 \mbox{ or } \zeta^6 = \lim_{t\to 0} \frac{b(\zeta \cdot t)}{b(t)}. \end{equation}
Also, we have the obvious relation \begin{equation}\label{eq.J} J( \zeta \cdot t) = J(t). \end{equation}

We go through the list in \cite[Table VI.1.2]{Mi} or \cite[Table 5.1 in Section 5.9]{SS} mentioned in Lemma \ref{l.KS} (iv) and check which cases are allowed under \cref{eq.ab} and \cref{eq.J}.

\textbf{The case when \(S_0\) is of type \(\textup{I}_0\).}
Then $J(0) \neq \infty$ and one of the following cases occurs.
\begin{enumerate}
\item[(i)] \(a(0)\neq 0\) and \(b(0)\neq 0\) (thus, \(J(0)\neq 0,1\));
\item[(ii)] \(a(0)=0\) and \(b(0)\neq 0\) (thus, \(J(0)=0\)); or
\item[(iii)] \(a(0)\neq 0\) and \(b(0)=0\) (thus, \(J(0)=1\)).
\end{enumerate}

For the case (i), \cref{eq.limit} and \cref{eq.limitb} give \(\zeta^4=\zeta^6=1\) which implies \(\zeta=-1\) and $d=2$.
Thus by \cref{eq.J}, the order of vanishing of $J(t) - J(0)$ must be an even number. This case belongs to (1).

For the case (ii),  \cref{eq.limitb} gives \(\zeta^6=1\) and $d = 2, 3$ or $6$.
For each possible value of $d$,  \cref{eq.ab} implies either \(a(t)\equiv 0\)  or  $$ {\rm mult}_0 a(t) \equiv 4 \mod d.$$   Thus $ {\rm mult}_0 a(t)$ is of the form \(2k\), \(3k-2\), or \(6k-2\) for some \(k\geq 1\),  depending on $d = 2, 3$ or $6$, respectively.
By  Lemma \ref{l.KS} (i), either  \(J(t)\equiv 0\) or ${\rm mult}_0 J(t)$ is of the form \(6k\), \(9k-6\), or \(18k-6\) with \(k\geq 1\), depending on $d = 2, 3$ or $6$, respectively.
This case is (2), (3), or (4).

For the case (iii), \cref{eq.limit} gives \(\zeta^4=1\) and $d = 2$ or $4$.
For each possible value of $d$, \cref{eq.ab} implies either \(b(t)\equiv 0\)  or  $$ {\rm mult}_0 b(t) \equiv  6 \mod d.$$   Thus $ {\rm mult}_0 b(t)$ is of the form \(2k\), or \(4k-2\) for some \(k\geq 1\),  depending on $d = 2$ or $4$, respectively.
By  Lemma \ref{l.KS} (iii), either  \(J(t)\equiv 1 \) or  ${\rm mult}_0 J(t)$ is of the form \(4k\) or \(8k-4\),  depending on $d = 2$ or $4$, respectively. This case is (5) or (6).

\textbf{The case when \(S_0\) is of type \(\textup{I}_N\) with \(N\geq 1\).}
Then $J(0) = \infty$, \(a(0)\neq 0\) and \(b(0)\neq 0\).
\cref{eq.limit} and \cref{eq.limitb} give \(\zeta^4= \zeta^6=1\) which implies \(d=2\).
Thus ${\rm mult}_0 J$ must be an even number by  \cref{eq.J}. This case is  (7).

\textbf{The case when \(S_0\) is of type \(\textup{I}_0^*\).} Then $J(0) \neq \infty$, $a(0) = b(0) =0$ and one of the following cases occurs.
\begin{enumerate}
\item[(i)] $J(0) \neq 0,1$, ${\rm mult}_0 a(t) = 2$ and ${\rm mult}_0 b(t) =3$;
\item[(ii)]  $J(0) =0$, ${\rm mult}_0 a(t) \geq 3 $ and ${\rm mult}_0 b(t) =3$; or
\item[(iii)] $J(0) =1$, ${\rm mult}_0 a(t) =2 $ and ${\rm mult}_0 b(t) \geq 4$.
\end{enumerate}

For the case (i), \cref{eq.limit} and \cref{eq.limitb} give \(\zeta^2=\zeta^3=1\), hence \(\zeta=1\), a contradiction.

For the  case (ii),  \cref{eq.limitb} gives \(\zeta^3=1\)  and $d=3$.  \cref{eq.ab} implies either  \(a(t)\equiv 0\) or
$$ {\rm mult}_0 a(t) \equiv  4 \mod d=3.$$   Thus $ {\rm mult}_0 a(t)$ is of the form
 \(3k+1\) for \(k\geq 1\).  By  Lemma \ref{l.KS} (i), either \(J(t)\equiv 0\) or \({\rm mult}_0 J(t) = 9k-3\) for \(k\geq 1\).
This case is  (8).

For the case (iii), \cref{eq.limit} gives  \(\zeta^2=1\) and $d=2$. \cref{eq.ab} implies either  \(b(t)\equiv 0\) or $$ {\rm mult}_0 b(t) \equiv  6 \mod d=2.$$ Thus $ {\rm mult}_0 b(t)$ is  of the form \(2k+2\) for \(k\geq 1\). By  Lemma \ref{l.KS} (i),   \(J(t)\equiv 1\) or \({\rm mult}_0 J(t) = 4k-2\) for \(k\geq 1\).
This case is (9).

\textbf{The case when \(S_0\) is of type \(\textup{I}_N^*\) with \(N\geq 1\).}
Then \(J(0)=\infty\), $a(0) = b(0) =0$, ${\rm mult}_0 a(t) =2$ and ${\rm mult}_0 b(t) =3$. \cref{eq.limit} and \cref{eq.limitb} imply \(\zeta^2=\zeta^3=1\), hence \(\zeta=1\), a contradiction.

\textbf{The case when \(S_0\) is of type \(\textup{II}\).}
Then $J(0) = 0, a(0) = b(0) =0$,  \({\rm mult}_0 a(t) \geq 1\) and ${\rm mult}_0 b(t)=1$. \cref{eq.limitb} gives \(\zeta^5=1\) and $d=5$. \cref{eq.ab} implies  either \(a(t)\equiv 0\) or $$ {\rm mult}_0 a(t) \equiv  4 \mod d=5,$$ hence $$ {\rm mult}_0 a(t) = 5k-1 \mbox{ for some } k\geq 1.$$ By  Lemma \ref{l.KS} (i), either \(J\equiv 0\) or ${\rm mult}_0 J(t) = 15k-5$ for some \(k\geq 1\).
This case is   (10).

\textbf{The case when \(S_0\) is of type \(\textup{III}\).}
Then $J(0) =1$, $a(0) = b(0) =0$, ${\rm mult}_0 a(t) =1$ and ${\rm mult}_0 b(t) \geq 2.$
\cref{eq.limit} gives  \(\zeta^3=1\) and $d=3$ \cref{eq.ab} implies either \(b(t)\equiv 0\) or $${\rm mult}_0 b(t) \equiv 6 \mod d=3.$$ Hence $$ {\rm mult}_0 b(t) = 3k \mbox{ for some } k\geq 1.$$ By  Lemma \ref{l.KS} (i), either  \(J\equiv 1\) or ${\rm mult}_0 J(t) = 6k-3$ for some \(k\geq 1\).
This case is  (11).

\textbf{The case when \(S_0\) is of type \(\textup{IV}\).}
Then $J(0) = 0$, $a(0) = b(0) =0$, ${\rm mult}_0 a(t) \geq 2$ and ${\rm mult}_0 b(t) =2$.
 \cref{eq.limit} implies \(\zeta^4=1\) and $d=2$ or $4$.
\cref{eq.ab} implies either \(a(t)\equiv 0\) or $${\rm mult}_0 a(t)
\equiv 4 \mod d, $$ hence $$ {\rm mult}_0 a(t) = 2k~(\mbox{if }d=2) \mbox{ or } 4k ~(\mbox{if }d=4)  \mbox{ for some } k\geq 1. $$ By  Lemma \ref{l.KS} (i), \(J(t)\equiv 0\) or ${\rm mult}_0 J(t) = 6k-4$ or $12k-4$ for some  \(k\geq 1\), depending on $d=2$ or $4$, respectively.
This case is   (12) or (13).

\textbf{The case when \(S_0\) is of type \(\textup{II}^*\).}
Then $a(0) = b(0) =0$, \({\rm mult}_0 a(t) \geq 4\) and ${\rm mult}_0 b(t) = 5$.
\cref{eq.limitb} gives \(\zeta=1\), a contradiction.

\textbf{The case when  \(S_0\) is of type \(\textup{III}^*\).}
Then $a(0) = b(0) =0$,  \({\rm mult}_0 a(t) =3\) and ${\rm mult}_0 b(t) \geq 5$.
\cref{eq.limit} gives \(\zeta=1\), a contradiction.

\textbf{The case when \(S_0\) is of type \(\textup{IV}^*\).}
Then $J(0) =0$, $a(0) = b(0) =0$, ${\rm mult}_0 a(t) \geq 3$ and ${\rm mult}_0 b(t) = 4.$
\cref{eq.limitb} gives  \(\zeta^2=1\) and $d=2$. \cref{eq.ab} implies
 either \(a(t)\equiv 0\) or $${\rm mult}_0 a(t) \equiv 4 \mod d=2, \mbox{ hence } {\rm mult}_0 a(t) = 2k+2 \mbox{ for some } k\geq 1.$$ By  Lemma \ref{l.KS} (i), either
\(J(t)\equiv 0\) or  \({\rm mult}_0 J(t) = 6k-2\) for some \(k\geq 1\).
This case is (14).
\end{proof}

Conversely, by choosing suitable Weierstrass coefficients, we can construct symplectic automorphisms for each case of Theorem \ref{t.auto}.

\begin{theorem}\label{t.sigma}
Fix an inhomogeneous coordinate system $(x, y)$ in an affine cell in $\BP^2$. Let $\Delta = \{ t \in \C \mid |t|<1\}$ be the unit disc.
For each case of  Theorem \ref{t.auto}, consider the following pair $(a(t), b(t))$ of polynomial functions on $\Delta$. 
Here we use the convention $t^{\infty} =0$.
\begin{itemize}
\item[(1)] $a(t) = -3(J(0) + t^{2k})(J(0) + t^{2k} -1), b(t) = 2(J(0) + t^{2k})(J(0) + t^{2k} -1)^2$;
\item[(2)] $a(t) = t^{2k}, b(t) \equiv 1$;
\item[(3)] $a(t) = t^{3k-2}, b(t)\equiv 1$;
\item[(4)] $a(t) = t^{6k-2}, b(t) \equiv 1$;
\item[(5)] $a(t) \equiv 1, b(t) = t^{2k}$;
\item[(6)] $a(t) \equiv 1, b(t) = t^{4k-2}$;
\item[(7)]  $a(t) = -3(1-t^{2k}), b(t) = 2(1-t^{2k})^2 $;
\item[(8)] $a(t) = t^{3k+1}, b(t) = t^3$;
\item[(9)] $a(t) = t^2, b(t) = t^{2k+2}$;
\item[(10)] $a(t) = t^{5k-1}, b(t) = t$;
\item[(11)] $a(t) = t, b(t) = t^{3k}$;
\item[(12)] $a(t) = t^{2k}, b(t) = t^2$;
\item[(13)] $a(t) = t^{4k}, b(t) = t^2$;
\item[(14)] $a(t) = t^{2k+2}, b(t) = t^4$.
\end{itemize}
Then the following holds.
 \begin{itemize} \item[(i)] The Weierstrass model of each germ of an elliptic fibration $(f\colon  S \to \Delta, \Sigma)$ in Theorem \ref{t.auto} is biholomorphic to the germ at $t=0$ of the closed surface  $\overline{S} \subset \BP^2 \times \Delta$ defined by the equation $y^2 = x^3 + a(t) x + b(t)$ with the polynomial functions $a(t), b(t)$ listed above.
 \item[(ii)] For the positive integer $d$ in each case of Theorem \ref{t.auto} and $\zeta \in \C$,  a primitive $d$-th root of unity, the holomorphic map  $(t, x, y) \mapsto (\zeta \cdot t, \zeta^2 \cdot x, \zeta^3 \cdot y)$  induces an automorphism $\sigma$ of the germ of the elliptic fibration $(f\colon  S \to \Delta, \Sigma)$ satisfying $\sigma^* \omega = \omega$ for the  symplectic form $\omega $ induced by $\frac{{\rm d} x \wedge {\rm d} t}{y}$ using the notation of  Lemma \ref{l.KS}. \end{itemize}
\end{theorem}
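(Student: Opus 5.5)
For part (i), I will use Lemma \ref{l.germ}: the germ is determined by the triple $(*, J(0), {\rm mult}_0 J)$. So it suffices to compute, for each listed pair $(a(t),b(t))$, three things: the Kodaira type of the fiber of the minimal resolution at $t=0$, the value $J(0)$, and ${\rm mult}_0 J$. I will then compare these with the corresponding case of Theorem \ref{t.auto}. The formula for $J$ comes from Lemma \ref{l.KS} (i), namely $J = 4a^3/(4a^3+27b^2)$. The Kodaira type is read off from ${\rm ord}_0 a$, ${\rm ord}_0 b$, and ${\rm ord}_0$ of the discriminant $4a^3+27b^2$, via Tate's table (\cite[Table VI.1.2]{Mi}). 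The case $k=\infty$, meaning $t^\infty=0$, corresponds to the isotrivial situation and will be checked the same way.

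Most cases are direct. For example, in (3) we have $a=t^{3k-2}$ and $b=1$, so $J=4t^{9k-6}/(4t^{9k-6}+27)$ has $J(0)=0$ with multiplicity $9k-6$, and the fiber is smooth. In (10), ${\rm ord}\, b=1$ forces type $\textup{II}$, and $J$ vanishes to order $3(5k-1)-2=15k-5$. In (1) and (7), the parametrization $a=-3s(s-1)$, $b=2s(s-1)^2$ with $s=J(0)+t^{2k}$ (respectively $s=1-t^{2k}$) gives:
\begin{equation*}
4a^3+27b^2 = -108 s^3(s-1)^3 + 108 s^2(s-1)^4 = -108 s^2 (s-1)^3 .
\end{equation*}
Hence $J = s$ in case (1), with the fiber smooth since $s\neq 0,1$. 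In case (7), the same computation gives $J=s/(s-1) = (t^{2k}-1)/t^{2k}$, with a pole of order $2k$ and discriminant order $2k$, so the fiber is $\textup{I}_{2k}$ with ${\rm mult}_0 J = 2k$, matching (7). I will take care with the normalization of the sign and constant in these two cases; if $J(0)$ comes out as, say, $1/J(0)$, I will adjust the affine substitution in $s$ accordingly. The remaining cases (8), (9), (11)--(14) are the same valuation bookkeeping, for instance $\textup{I}_0^*$ from ${\rm ord}(a,b)=(\ge 2, 3)$ or $(2,\ge 3)$.

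For part (ii), I will check invariance directly. The map $(t,x,y)\mapsto(\zeta t,\zeta^2x,\zeta^3y)$ preserves $y^2=x^3+a(t)x+b(t)$ exactly when
\begin{equation*}
a(\zeta t)=\zeta^4 a(t), \qquad b(\zeta t)=\zeta^6 b(t).
\end{equation*}
For a monomial $t^m$ this requires $m\equiv 4$ (respectively $6$) mod $d$, and this is exactly how the exponents in the list were chosen, for example $3k-2\equiv 4 \pmod 3$ and $t^{2k+2}$ when $d=2$. In cases (1) and (7), $d=2$ and $a,b$ are functions of $t^2$, so invariance holds since $\zeta^4=\zeta^6=1$. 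The form transforms as
\begin{equation*}
\sigma^*\frac{{\rm d}x\wedge {\rm d}t}{y} = \frac{\zeta^2\cdot\zeta}{\zeta^3}\,\frac{{\rm d}x\wedge {\rm d}t}{y} = \frac{{\rm d}x\wedge {\rm d}t}{y},
\end{equation*}
which is consistent with \eqref{eq-symplec-condition-equi} for $h\equiv 1$ and $u=\zeta$. The section $x=z=0$ is preserved because the map extends to a projective linear map fixing the point at infinity.

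Finally, I need to lift $\sigma$ to the minimal resolution $S$ of $\overline S$, preserve $\Sigma$, and know that $\omega$ stays symplectic there. The minimal resolution is canonical, so the automorphism of $\overline S$ lifts uniquely. By Lemma \ref{l.KS} (iii), or Proposition \ref{p.Beauville} (i), the form $\frac{{\rm d}x\wedge{\rm d}t}{y}$ extends to a symplectic form on $S$. Its pullback by the lifted $\sigma$ agrees with it on the dense smooth locus, so it is invariant. I expect the main obstacle to be the case-by-case verification in (i), especially confirming the Kodaira type and the exact normalization of $J(0)$ in cases (1) and (7). Part (ii) is purely formal.
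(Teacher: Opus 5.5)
Your approach is essentially the paper's. For (ii) you check $a(\zeta t)=\zeta^4a(t)$, $b(\zeta t)=\zeta^6b(t)$ and the invariance of $\frac{{\rm d}x\wedge{\rm d}t}{y}$ exactly as the paper does; your remarks on the section $[0:1:0]$ and on lifting $\sigma$ to the minimal resolution are correct details the paper leaves implicit. For (i) the paper only says the pairs are copied from \cite[Table VI.1.2]{Mi}. You instead recompute the triple $(*,J(0),{\rm mult}_0J)$ for each pair and invoke Lemma \ref{l.germ}, which is the same logic made explicit.

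\textbf{Case (7).} One step as written is wrong, though easily repaired. The pair $a=-3(1-t^{2k})$, $b=2(1-t^{2k})^2$ is \emph{not} of the form $(-3s(s-1),\,2s(s-1)^2)$ with $s=1-t^{2k}$. That substitution would give $a=3t^{2k}(1-t^{2k})$, and your displayed identity would then yield $J=s$, $J(0)=1$ and a discriminant of order $6k$. This contradicts what you then assert, and the formula $J=s/(s-1)$ you write is not what comes out either. The correct computation sets $w=1-t^{2k}$, so $a=-3w$ and $b=2w^2$. Then
\[
4a^3+27b^2=-108w^3+108w^4=-108\,t^{2k}w^3,\qquad J=\frac{4a^3}{4a^3+27b^2}=\frac{1}{t^{2k}}.
\]
Hence $J(0)=\infty$, ${\rm mult}_0J=2k$, $a(0)b(0)\neq0$ and ${\rm ord}_0(4a^3+27b^2)=2k$, which is type $\textup{I}_{2k}$. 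So your stated conclusion is right, but the derivation is not.

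Your fallback of ``adjusting the affine substitution in $s$'' is not available. The statement fixes the polynomials, so a mismatch would falsify it rather than call for reparametrization. Fortunately none is needed: in case (1) your identity gives exactly $J=J(0)+t^{2k}$, with discriminant $-108s^2(s-1)^3\neq0$ at $t=0$.

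\textbf{Smaller points.}
\begin{itemize}
\item To read Kodaira types from Tate's table, and to identify the minimal resolution of $\overline{S}$ with a relatively minimal elliptic fibration before applying Lemma \ref{l.germ}, say that each equation is minimal, i.e.\ ${\rm ord}_0a<4$ or ${\rm ord}_0b<6$. This holds in every case, including $k=\infty$.
\item Your shorthand ``$\textup{I}_0^*$ from ${\rm ord}(a,b)=(\ge2,3)$ or $(2,\ge3)$'' also requires ${\rm ord}_0(4a^3+27b^2)=6$. With ${\rm ord}(a,b)=(2,3)$ and cancelling leading terms one gets $\textup{I}_N^*$ instead. In (8) and (9) the other order is strictly larger ($3k+1\ge4$, resp.\ $2k+2\ge4$), so no cancellation occurs and the conclusion stands.
\end{itemize}
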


\begin{proof}
For each equivalence class of germs of elliptic fibrations, a Weierstrass equation $y^2 = x^3 + a(t) x + b(t)$ of the equivalence class by
explicit polynomial functions $a(t), b(t)$ are given in \cite[Table VI.1.2]{Mi}.  The polynomials $a(t)$ and $ b(t)$ in Theorem \ref{t.sigma} are copied from there for each case in Theorem \ref{t.auto}. This proves (i).

The holomorphic map in (ii)  transforms the Weierstrass equation $y^2 = x^3 + a(t) x + b(t)$ to
$$ \zeta^6 y^2 = \zeta^6 x^3 + \zeta^2 a(\zeta \cdot t)   x + b(\zeta \cdot t).
$$
So it preserves the Weierstrass model if and only if \begin{equation}\label{eq.uv}
a(\zeta \cdot t)  = \zeta^4 a(t) \ \mbox{ and } \ b(\zeta \cdot t) = \zeta^6 b(t). \end{equation}
It is straightforward to check \cref{eq.uv} for each case in the above list of $a(t)$ and $b(t)$. The equality $\sigma^* \frac{{\rm d} x \wedge {\rm d} t}{y}  = \frac{{\rm d} x \wedge {\rm d} t}{y} $ is  immediate.  Thus the map induces an automorphism $\sigma$ of the germ of the symplectic elliptic fibration $(f\colon  S \to \Delta, \Sigma, \omega)$.
 \end{proof}

From Theorem \ref{t.sigma}, we see that each case of Theorem \ref{t.auto} does occur:

\begin{corollary}\label{c.realize}
For each germ in the list of Theorem \ref{t.auto}, there actually exists a symplectic elliptic fibration with an automorphism of the type described in Theorem \ref{t.auto}, which realizes that germ. \end{corollary}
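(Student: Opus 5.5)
The plan is to read the corollary off Theorem \ref{t.sigma}. For each of the fourteen cases of Theorem \ref{t.auto} we take the listed polynomials $a(t), b(t)$ and the closed surface $\overline{S} \subset \BP^2 \times \Delta$ given by $y^2 = x^3 + a(t)x + b(t)$. We then take its minimal resolution $S \to \overline{S}$. The section $\Sigma$ is the proper transform of $x=z=0$, and $f\colon S \to \Delta$ is the induced projection.

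\textbf{Step 1.} We first check that $(f\colon S \to \Delta, \Sigma)$ is a relatively minimal elliptic fibration realizing the prescribed triple $(*, J(0), {\rm mult}_0 J)$. For the type, note that $(a,b)$ satisfies Lemma \ref{l.KS} (ii), so the Weierstrass model is minimal. Tate's algorithm, equivalently \cite[Table VI.1.2]{Mi}, then identifies the fiber type from ${\rm ord}_0 a$, ${\rm ord}_0 b$ and ${\rm ord}_0(4a^3+27b^2)$; this is exactly Theorem \ref{t.sigma} (i). For $J$, we compute it by Lemma \ref{l.KS} (i). In the cases where $k=\infty$ the convention $t^\infty=0$ gives a constant $J$, which is the isotrivial germ. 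After shrinking the disc if necessary, we may assume the only singular fiber lies over $0$.

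\textbf{Step 2.} Next we produce the symplectic form. By Lemma \ref{l.KS} (iii), $\frac{{\rm d}x\wedge{\rm d}t}{y}$ is a symplectic form on the smooth locus of $\overline{S}$ and lifts to a symplectic form $\omega$ on $S$. Concretely, the singularities of $\overline{S}$ are rational double points, so the minimal resolution is crepant by Proposition \ref{p.Beauville} (i).

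\textbf{Step 3.} Now we construct the automorphism. By Theorem \ref{t.sigma} (ii), the map $(t,x,y)\mapsto(\zeta t, \zeta^2 x, \zeta^3 y)$ preserves $\overline{S}$ and the form $\frac{{\rm d}x\wedge {\rm d}t}{y}$, and it fixes $x=z=0$. Since a minimal resolution is unique, the automorphism lifts to an automorphism $\sigma$ of $S$. This lift satisfies $f\circ\sigma = \zeta\cdot f$ and $\sigma(\Sigma)=\Sigma$. It also satisfies $\sigma^*\omega=\omega$, because the two forms agree on a dense open set. Its order is exactly $d$: $\sigma^d$ is the identity on $\overline{S}$, hence on $S$, and $\sigma^j$ acts on $t$ by the factor $\zeta^j \neq 1$ for $0<j<d$.

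The only potential obstacle is the bookkeeping in Step 1, namely verifying case by case that the listed $(a,b)$ yields the stated Kodaira type and ${\rm mult}_0 J$. This is either cited from \cite[Table VI.1.2]{Mi} or checked by a direct order computation. For instance, in case (3) we have ${\rm ord}_0 J = 3(3k-2) = 9k-6$, and in case (7) the discriminant vanishes to order $2k$, giving type $\textup{I}_{2k}$.
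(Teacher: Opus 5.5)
Your proposal is correct and follows the paper's own route: the paper simply reads the corollary off Theorem~\ref{t.sigma}. You additionally spell out the details the paper leaves implicit, namely the minimal resolution of the Weierstrass surface, lifting $(t,x,y)\mapsto(\zeta t,\zeta^2x,\zeta^3y)$ via uniqueness of the minimal resolution, $\sigma^*\omega=\omega$ by density, that the order is exactly $d$, and the order computations for $J$ and the fiber type.
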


\section{Symplectic quotient by automorphisms of finite order}\label{s.quotient}
The goal of this section is to prove Theorem \ref{t.isogeny}. First, we need to
state and prove Theorem \ref{t.quotient}, which is necessary for the formulation of Theorem \ref{t.isogeny}. This is a more elaborate description of the local geometry of Proposition \ref{p.descent}.

\begin{theorem}\label{t.quotient}
Let $(f'\colon  S' \to \Delta', \Sigma', \omega')$  be the symplectic elliptic fibration resulting from Proposition \ref{p.descent} applied to the setting of
 Theorem \ref{t.auto}.
Then for each case of Theorem \ref{t.auto}, the equivalence class of the germ of $(f'\colon  S' \to \Delta', \Sigma')$ at $0$ is given by the following triple. We include the value of $d$ for convenience.
\begin{itemize}
\item[(1)] \((\textup{I}^*_{0}, J(0), k), d=2, \) where \(J(0)\neq 0,1,\infty\);
\item[(2)] $(\textup{I}^*_0, 0,  3k), d= 2$;
\item[(3)] $(\textup{IV}^*, 0, 3k-2), d= 3 $;
\item[(4)] $(\textup{II}^*, 0, 3k-1), d=6$;
\item[(5)] $(\textup{I}^*_0, 1,2k), d=2$;
\item[(6)] $(\textup{III}^*, 1, 2k-1), d= 4$;
\item[(7)]  \((\textup{I}^*_{k}, \infty,  k), d= 2\);
\item[(8)] $(\textup{II}^*, 0, 3k-1), d=3$;
\item[(9)] $(\textup{III}^*, 1, 2k-1), d=2$;
\item[(10)] $(\textup{II}^*, 0, 3k-1), d=5$;
\item[(11)] $(\textup{III}^*, 1, 2k-1), d= 3$;
\item[(12)] $(\textup{IV}^*, 0, 3k-2), d=2$;
\item[(13)] $(\textup{II}^*, 0, 3k-1), d=4$;
\item[(14)] $(\textup{II}^*, 0, 3k-1), d=2$.
\end{itemize}

\end{theorem}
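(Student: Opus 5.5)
We work case by case with the explicit normal forms of Theorem \ref{t.sigma}. By Theorem \ref{t.auto} and Lemma \ref{l.germ}, the germ of $(f\colon S\to\Delta,\Sigma)$ is equivalent to the germ at $t=0$ of the Weierstrass surface $y^2=x^3+a(t)x+b(t)$ with $(a,b)$ as listed in Theorem \ref{t.sigma}. We will take $\sigma$ to be $(t,x,y)\mapsto(\zeta t,\zeta^2x,\zeta^3y)$, which is symplectic for $\frac{{\rm d}x\wedge {\rm d}t}{y}$. This replacement is legitimate because the resulting $S'$ depends only on the symplectic germ together with $\sigma$. Since $\sigma^d={\rm Id}$ and $\sigma$ commutes with $f$ up to $\zeta$, the generated cyclic group acts on the base by $t\mapsto\zeta t$, so Lemma \ref{l.id} shows that $\sigma$ is determined by its action on the base. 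Any ambiguity in the symplectic form is a nowhere vanishing function $h(t)$, which is absorbed by the rescaling \cref{eq-symplec-condition-equi}. Hence it suffices to compute the quotient for these explicit models.

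For the quotient, set $s=t^d$ as the coordinate on $\Delta'=\Delta/\langle\zeta\rangle$. Since $a(\zeta t)=\zeta^4a(t)$ and $b(\zeta t)=\zeta^6b(t)$, we can write $a(t)=t^{4}\alpha(t^d)$ and $b(t)=t^{6}\beta(t^d)$ as Laurent expressions. We then introduce the invariant coordinates
$$X=t^{-2}x,\qquad Y=t^{-3}y,$$
which are genuinely $\sigma$-invariant. Over $\Delta'\setminus\{0\}$ the quotient surface therefore has the Weierstrass equation $Y^2=X^3+\alpha(s)X+\beta(s)$, with $\alpha,\beta$ meromorphic in $s$. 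Multiplying $(X,Y)$ by suitable powers of $s$ (replacing $X$ by $s^{-2m}X$, and so on) makes $\alpha(s)s^{4m}$ and $\beta(s)s^{6m}$ holomorphic and minimal, in the sense of Lemma \ref{l.KS}(ii). This produces a Weierstrass model of the relatively minimal fibration $S'$ that is isomorphic to the $\psi$-quotient over the punctured disc. Since $f'$ is relatively minimal, it is determined by its restriction to the punctured disc, so this identifies $f'$.

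It remains to read off the triple for $S'$. The functional invariant satisfies $J'(s)=J(t)$, so $J'(0)=J(0)$ and ${\rm mult}_0J'={\rm mult}_0J/d$. This immediately gives the third entries; for example $(9k-6)/3=3k-2$ and $(15k-5)/5=3k-1$. The Kodaira type is then determined by $({\rm ord}_0\alpha,{\rm ord}_0\beta)$ after minimalization, via \cite[Table VI.1.2]{Mi}. As a check of the pattern, in case (2) we have $a=t^{2k}$, $b=1$ and $d=2$, so $\alpha=s^{k-2}$, $\beta=s^{-3}$; multiplying by $s^{4}$ and $s^{6}$ gives orders $(k+2,3)$, which is type $\textup{I}_0^*$ with $J'(0)=0$. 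In case (3) we have $\beta=s^{-2}$, and twisting gives $b$-order $4$, which is type $\textup{IV}^*$. In case (7) the $\textup{I}_{2k}$ model yields $(\alpha,\beta)$ of orders $(-2,-3)$, and twisting gives $\textup{I}^*_k$. The remaining cases are analogous, and one checks that the resulting orders always satisfy the minimality condition of Lemma \ref{l.KS}(ii).

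The main obstacle is justifying that the minimal Weierstrass model obtained by twisting really is the minimal resolution $p'$ of $S/\langle\sigma\rangle$ from Proposition \ref{p.descent}, and not merely a surface birational to it. Our plan for this is as follows. By Proposition \ref{p.Beauville}, $S'$ carries a symplectic form, so $K_{S'}$ is trivial near the central fiber. Consequently $S'\to\Delta'$ has no $(-1)$-curves in the fiber, that is, it is relatively minimal, and it has the section $\Sigma'$. A relatively minimal elliptic fibration with a section is unique in its birational class over the disc, so $S'$ coincides with the fibration given by the twisted minimal model. This reduces everything to the order computation above.
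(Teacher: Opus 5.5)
Your proposal is correct, and it takes a genuinely different route from the paper. The paper never writes down the quotient. It uses $J=J'\circ\psi$ to get $J(0')=J(0)$ and ${\rm mult}_{0'}J=\frac1d{\rm mult}_0J$, and reads off from the Kodaira table that these data leave exactly two candidate fiber types: $\textup{I}_n$ or $\textup{I}_n^*$, $\textup{II}$ or $\textup{IV}^*$, $\textup{III}$ or $\textup{III}^*$, $\textup{IV}$ or $\textup{II}^*$. It then excludes the first candidate in one of two ways.
\begin{itemize}
\item In cases (3), (8), (10), (11) it compares $d$-th powers of monodromy matrices.
\item In the remaining cases it counts components. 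The fixed point $\Sigma\cap f^{-1}(0)$ is an $A_{d-1}$ point of $\w S$ because $\sigma$ preserves $\omega$. For the involutions in (7), (9), (14) it uses a direct count on the quotient of $\textup{I}_{2k}$, $\textup{I}_0^*$, $\textup{IV}^*$.
\end{itemize}

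You instead compute the quotient explicitly. The $\sigma$-invariant map $(t,x,y)\mapsto(t^d,t^{2d-2}x,t^{3d-3}y)$ identifies $S/\langle\sigma\rangle$ bimeromorphically with $Y^2=X^3+s^4\alpha(s)X+s^6\beta(s)$. The orders then transform uniformly:
\begin{itemize}
\item ${\rm ord}\,a\mapsto 4+({\rm ord}\,a-4)/d$;
\item ${\rm ord}\,b\mapsto 6+({\rm ord}\,b-6)/d$;
\item ${\rm ord}\,\Delta\mapsto 12-(12-{\rm ord}\,\Delta)/d$.
\end{itemize}
Minimality is inherited from the original model. Relative minimality of $S'$, which follows from $K_{S'}\cong\sO$, together with Lemma \ref{l.Ktrivial}, pins down $S'$. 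The computation does give the stated triple in all fourteen cases.

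What this buys you: the argument is uniform and yields an explicit Weierstrass equation for $S'$. It also explains the pattern, since the new orders of $a,b$ are always at least $2$ and $3$, so the quotient fiber is always $\textup{I}_n^*$, $\textup{II}^*$, $\textup{III}^*$ or $\textup{IV}^*$. This replaces the monodromy tables and the component counts, which the paper only sketches in (7), (9), (14). The paper's route, in turn, needs no coordinates and no normalization of $\sigma$.

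Two points to tighten.

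First, Lemma \ref{l.germ} identifies $(S,\Sigma)$ with the model of Theorem \ref{t.sigma} only up to a reparametrization of the base. Under that identification $\sigma$ need not act by $t\mapsto\zeta t$, and $\omega$ need not become $\frac{{\rm d}x\wedge{\rm d}t}{y}$. Lemma \ref{l.id} does not remove this. The clean fix is to stay in the Weierstrass coordinates from the proof of Theorem \ref{t.auto}, where $\sigma(t,x,y)=(\zeta t,u^2x,u^3y)$ and $\omega=h\frac{{\rm d}x\wedge{\rm d}t}{y}$ with $\zeta h(\zeta t)=u(t)h(t)$ by \eqref{eq-symplec-condition-equi}. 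Pass to the coordinates $h^{-2}x$ and $h^{-3}y$. Then $\omega$ becomes $\frac{{\rm d}x\wedge{\rm d}t}{y}$ and $\sigma$ becomes $(\zeta t,\zeta^2x,\zeta^3y)$. The coefficients become $h^{-4}a$ and $h^{-6}b$: these are $\zeta$-equivariant with the orders dictated by the germ, but they are not literally the polynomials of Theorem \ref{t.sigma}. Since your type computation uses only ${\rm ord}\,a$, ${\rm ord}\,b$ and ${\rm ord}\,\Delta$, nothing else changes.

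Second, the claim that $f'$ is determined by its restriction to the punctured disc should rest on the explicit map above. That map is meromorphic across $t=0$, so it really gives a bimeromorphic map over $\Delta'$ to which Lemma \ref{l.Ktrivial} applies.
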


\begin{proof}
Let us denote by $0' \in \Delta'$ the origin of $\Delta'$. Then  $J(0')$  is equal to $J(0)$ of $f\colon  S \to \Delta$  at $0 \in \Delta$ and $${\rm mult}_{0'} J = \frac{1}{d} {\rm mult}_0 J.$$ Thus  $(J(0'), {\rm mult}_{0'} J)$ must be as listed in Theorem \ref{t.quotient}. For each case, one can check from \cite[Table VI.1.2]{Mi} or \cite[Table 5.1 in Section 5.9]{SS} that the data $(J(0'), {\rm mult}_{0'} J)$ allow
  only two possibilities for the singularity type of the central fiber. We list them below.
\begin{itemize}
\item[(1)] $\textup{I}_0$ or $\textup{I}^*_0$.
\item[(2)] $\textup{I}_0$ or $\textup{I}^*_0$.
\item[(3)] $\textup{II}$ or $\textup{IV}^*$.
\item[(4)] $\textup{IV}$ or $\textup{II}^*$.
\item[(5)] $\textup{I}_0$ or $\textup{I}_0^*$.
\item[(6)] $\textup{III}$ or $\textup{III}^*$.
\item[(7)] $\textup{I}_k$ or $\textup{I}_k^*$.
\item[(8)]  $\textup{IV}$ or $\textup{II}^*$.
\item[(9)] $\textup{III}$ or $\textup{III}^*$.
\item[(10)] $\textup{IV}$ or $\textup{II}^*$.
\item[(11)] $\textup{III}$ or $\textup{III}^*$.
\item[(12)] $\textup{II}$ or $\textup{IV}^*$.
\item[(13)] $\textup{IV}$ or $\textup{II}^*$.
\item[(14)] $\textup{IV}$ or  $\textup{II}^*$.
\end{itemize}
  It remains to show that only the second type in each case can actually occur.

  First, recall that the homological invariant, namely,  the monodromy on the first homology of a smooth fiber around the origin, is determined by the type of the singular fiber. The monodromy of $f\colon S \to \Delta$ around $0$ must be equal to the $d$-th power of that of $f'\colon  S' \to \Delta'$ around $0'$. From the explicit matrix expressions of the monodromy in \cite[Table 6, Section V.10]{BHPV} or \cite[Table VI.2.1]{Mi}, we can check \begin{itemize}
   \item[(3)] the cubic power of the monodromy of $\textup{II}$ cannot be the monodromy of $\textup{I}_0$;
\item[(8)] the cubic power of the monodromy of $\textup{IV}$ cannot be the monodromy of $\textup{I}^*_0$;
    \item[(10)] the fifth power of the monodromy of  $\textup{IV}$ cannot be the monodromy of $\textup{II}$; and
        \item[(11)] the cubic power of the monodromy of $\textup{III}$ cannot be the monodromy of $\textup{III}$. \end{itemize}
            This settles the above four cases.

  To handle the remaining cases,
  note that the action of $\Z/d \Z$ on $S$ fixes the point $s\coloneqq\Sigma \cap f^{-1}(0)$. Since it preserves $\omega$, the action is trivial on $\wedge^2 T_s S$. Thus the action is locally equivalent at $s$ to the quotient of $\C^2$ by $(z_1, z_2)  \mapsto (\zeta \cdot z_1, \zeta^{-1} \cdot z_2)$,  and  the quotient $\w{S}$ must have a singularity of type $A_{d-1}$. In particular, its minimal resolution has a chain of $d-1$ rational curves, and the central fiber of $S'$ has at least $d$ irreducible components. This shows that only the second possibility is realizable in (1), (2), (4), (5), (6), (12), and (13).
    In the remaining cases of (7), (9), and (14), the order is $d=2$ and the involution of the central fiber of $S$ fixes the point $s$.  From the type of the central fiber of $S$, (7) $\textup{I}_{2k}$, (9) $\textup{I}^*_0$ and (14) $\textup{IV}^*$, it is easy to see that the quotient by an involution has at least $k+1$, $3$ and $5$ components, respectively.
    Thus the central fiber of $S'$ must have at least $k+2$, $4$, and $6$ components, respectively. Thus only the second possibility is realizable in (7), (9), and (14).
    The proof is thus completed.
 \end{proof}

Now we proceed to prove Theorem \ref{t.isogeny}.
The next lemma is well-known (see \cite[Proposition II.1.2]{Mi}).

\begin{lemma}\label{l.Ktrivial}
Let $f\colon  X \to C$ and $\w{f}\colon  \w{X}\to C$ be two relatively minimal genus 1 fibrations over the same Riemann surface $C$. Then a bimeromorphic map $\Phi\colon X \dasharrow\w{X}$ satisfying $f = \w{f} \circ \Phi$ is biholomorphic. \end{lemma}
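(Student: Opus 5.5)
The plan is to reduce the claim to the fact that a bimeromorphic map between smooth relatively minimal surfaces over a curve is an isomorphism. Write $\Phi$ as a correspondence via its resolution of indeterminacy: let $W$ be a resolution of the graph of $\Phi$, with proper bimeromorphic holomorphic maps $p\colon W\to X$ and $q\colon W\to\w{X}$ with $\Phi = q\circ p^{-1}$. Since $f = \w{f}\circ\Phi$ on a dense open set, $f\circ p = \w{f}\circ q$ on $W$. By the factorization of proper bimeromorphic morphisms of smooth surfaces into blow-ups of points (valid in the analytic category, locally over $C$), we may choose $W$ so that $p$ is a finite sequence of point blow-ups. Because everything is compatible with the maps to $C$, all blown-up points and exceptional curves lie in fibers. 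It suffices to work locally over a small disc around each point of $C$, which keeps the number of blow-ups finite even if $C$ is noncompact.

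I will choose $W$ minimal, meaning the number of blow-ups in $p$ is as small as possible. Suppose $p$ is not an isomorphism, and let $E\subset W$ be the last exceptional $(-1)$-curve of $p$. If $q$ contracted $E$, then $\Phi = q\circ p^{-1}$ would factor through the surface obtained by contracting $E$. That would contradict the minimality of $W$. Hence $q(E)$ is a curve $\w{E}$ contained in a fiber of $\w{f}$, and $q$ is an isomorphism near the generic point of $E$.

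Now apply the standard canonical class argument. Since $q$ is a composition of blow-ups, write $K_W = q^*K_{\w{X}} + \sum a_i F_i$ with $a_i>0$ over the $q$-exceptional curves $F_i$. Then $-1 = K_W\cdot E = K_{\w{X}}\cdot q_*E + \sum a_i F_i\cdot E$. Here $E\neq F_i$, so $F_i\cdot E\ge 0$. Since $\w{X}\to C$ is relatively minimal, $K_{\w{X}}$ is $\w{f}$-nef on fibers: fiber components have $K\cdot \w{E}\ge 0$, because any component with $K\cdot \w{E}<0$ and $\w{E}^2<0$ would be a $(-1)$-curve, which relative minimality rules out (Zariski's lemma gives $\w{E}^2\le 0$ for fiber components, and $\w{E}^2=0$ means the whole fiber, which has $K$-degree $0$ for genus 1). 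This gives $K_W\cdot E\ge 0$, a contradiction.

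Therefore $p$ is an isomorphism and $\Phi = q\circ p^{-1}$ is holomorphic. Applying the same argument to $\Phi^{-1}$, with the roles of $X$ and $\w{X}$ exchanged, shows that $\Phi^{-1}$ is holomorphic too, so $\Phi$ is biholomorphic. The main point requiring care is the relative nefness of $K$ on fiber components of a relatively minimal genus 1 fibration. This follows from the canonical bundle formula $K_X = f^*(K_C\otimes \mathbb{L})$ locally when there are no multiple fibers, or in general from the adjunction argument above.
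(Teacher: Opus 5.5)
Your proof is correct. The paper gives no argument of its own here: it calls the lemma well known and cites \cite[Proposition II.1.2]{Mi}. What you wrote is the standard minimal-model argument behind such statements:
\begin{enumerate}
\item eliminate the indeterminacy of $\Phi$ by point blow-ups;
\item take the last $p$-exceptional $(-1)$-curve $E$ of a minimal resolution;
\item note that minimality forces $q(E)=\w{E}$ to be a component of a fiber of $\w{f}$;
\item conclude $-1=K_W\cdot E\geq K_{\w{X}}\cdot \w{E}\geq 0$.
\end{enumerate}
The first inequality in the last step uses positivity of the discrepancies of $q$. The second uses nefness of $K_{\w{X}}$ on fiber components, which follows from adjunction, Zariski's lemma and relative minimality.

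Compared with the bare citation, your route is self-contained. It is also carried out locally over small discs in $C$ in the analytic category, and it never uses a section. So it directly covers the noncompact and germ situations where the paper actually applies the lemma, for instance in the proofs of Lemma \ref{l.isogeny}, Theorem \ref{t.isogeny} and Theorem \ref{t.shrink}.

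One small precision in your nefness step. When $\w{E}^2=0$, Zariski's lemma says the fiber is $F=m\w{E}$ for some $m\geq 1$. Hence $K_{\w{X}}\cdot \w{E}=\frac{1}{m}K_{\w{X}}\cdot F=0$, since $F^2=0$ and $p_a(F)=1$. Your phrase ``means the whole fiber'' should be read this way, and this reading also handles multiple fibers.
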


Recall that an isogeny \((\psi,\Psi)\) from a symplectic elliptic fibration \((\w f\colon \w W\to\w C,\w\Sigma,\w\omega)\) to another symplectic elliptic fibration \((f\colon X\to C,\Sigma,\omega)\) consists of a surjective holomorphic map \(\psi\colon \w C\to C\) and a dominant meromorphic map \(\Psi\colon\w X\dashrightarrow X\) such that over a nonempty Zariski-open subset of \(\w C\), \((\w f\colon \w W\to\w C,\w\Sigma,\w\omega)\) is isomorphic to the base change of \((f\colon X\to C,\Sigma,\omega)\) along \(\psi\) (see Definition \ref{d.isogeny}). 
\begin{lemma}\label{l.isogeny}
With the same notation as above, let $(f'\colon  X' \to \w{C}, \Sigma', \omega')$ be another symplectic elliptic fibration over the Riemann surface $\w{C}$ which admits an isogeny $(\psi, \Psi')$ to $(f\colon  X \to C, \Sigma, \omega)$. Then  $(f'\colon  X' \to \w{C}, \Sigma', \omega')$ is isomorphic to $(\w{f}\colon  \w{X} \to \w{C}, \w{\Sigma}, \w{\omega})$  and there exists a symplectic biholomorphic map \(\Phi\colon X'\to \w X\) such that \(\w f\circ\Phi=f'\). 
In other words, an isogeny $(\psi, \Psi)$ is determined  by $\psi$.   
\end{lemma}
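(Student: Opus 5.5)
Since $(\psi, \Psi)$ and $(\psi, \Psi')$ are isogenies with the same base map, Definition \ref{d.isogeny} provides nonempty Zariski-open subsets $U, U' \subset \w{C}$ on which $\psi$ is unramified and on which $\Psi$ (resp. $\Psi'$) gives an isomorphism
\[
(\w{f}, \w{\Sigma}, \w{\omega})|_{U} \cong (\psi|_U)^*(f, \Sigma, \omega), \qquad (f', \Sigma', \omega')|_{U'} \cong (\psi|_{U'})^*(f, \Sigma, \omega).
\]
I will set $U'' \coloneqq U \cap U'$, which is still a nonempty Zariski-open subset of $\w{C}$. Over $U''$, composing the first isomorphism with the inverse of the second gives a biholomorphism $\Phi''\colon f'^{-1}(U'') \to \w{f}^{-1}(U'')$ over the identity of $U''$. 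This map sends $\Sigma'$ to $\w{\Sigma}$ and pulls $\w{\omega}$ back to $\omega'$, because both fibrations are identified with the same pullback symplectic elliptic fibration. The proof therefore reduces to extending $\Phi''$ across the finitely many (locally) bad fibers over $\w{C} \setminus U''$.

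To do this extension I will use Lemma \ref{l.Ktrivial}, and for that I first need $\Phi''$ to be bimeromorphic on all of $X'$. One route is to argue directly that $\Phi''$ extends meromorphically, since it is the composition of the meromorphic maps $\Psi$ and $\Psi'$ restricted to a Zariski-open set. However, $\Psi'$ has no meromorphic inverse globally, so this needs care. The cleaner route is through the invariants. Since $\Phi''$ is an isomorphism of elliptic fibrations over $U''$, the functional invariants of $f'$ and $\w{f}$ agree on $U''$, and hence everywhere. The homological invariants also agree, because $\pi_1(U'') \to \pi_1(\w{C}^{\rm reg})$ is surjective. Both fibrations are relatively minimal: $X'$ and $\w{X}$ carry nowhere vanishing 2-forms, so $K$ is trivial, and a $(-1)$-curve $E$ would satisfy $K\cdot E = -1$, which is impossible. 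By \cite[Theorem 3.14, Chapter I]{FM}, exactly as in the proof of Theorem \ref{t.lambda*}, $\Phi''$ then extends to a biholomorphism $\Phi\colon X' \to \w{X}$ with $\w{f} \circ \Phi = f'$ and $\Phi(\Sigma') = \w{\Sigma}$. (Alternatively, a bimeromorphic extension combined with Lemma \ref{l.Ktrivial} would give the same conclusion.)

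One subtlety is that \cite{FM} determines the fibration only up to an isomorphism that may differ from $\Phi''$ by an automorphism over $U''$. I will handle this by noting that the isomorphism of fibrations is built by gluing local extensions of the given map. Alternatively, I can compose with an automorphism and use Lemma \ref{l.id} to see that no ambiguity remains, since any automorphism over the identity preserving the section and the form is trivial. This point is the main obstacle, and I expect it to be settled by a local argument near each point of $\w{C} \setminus U''$: on a small disc, the germ of an elliptic fibration is determined by $(*, J, {\rm mult}\, J)$ (Lemma \ref{l.germ}) together with the monodromy, and the given map on the punctured disc extends.

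Finally, the holomorphic 2-form $\Phi^*\w{\omega} - \omega'$ vanishes on the nonempty open set $f'^{-1}(U'')$, so it vanishes identically. Hence $\Phi$ is symplectic. Moreover, $\Psi = \Psi' \circ \Phi^{-1}$ holds as meromorphic maps, since the two sides agree on a dense open set. This shows that the isogeny is determined by $\psi$.
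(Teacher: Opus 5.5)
Your argument has a genuine gap at exactly the step you call the main obstacle, and neither remedy you suggest closes it.

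The classification by functional and homological invariants (the one used in the proof of Theorem \ref{t.lambda*}) is a statement about isomorphism classes. It produces \emph{some} isomorphism $\Theta\colon X'\to\w{X}$ of elliptic fibrations over $\w{C}$. It does not take your $\Phi''$ as input, so nothing says $\Theta$ restricts to $\Phi''$ on $f'^{-1}(U'')$ or that it is symplectic.

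The appeal to Lemma \ref{l.id} is circular. The discrepancy $\alpha\coloneqq\Theta^{-1}\circ\Phi''$ is an automorphism of $f'^{-1}(U'')$ over the identity fixing $\Sigma'$. One checks directly that $\alpha^*\omega'=\omega'$ holds if and only if $\Theta^*\w{\omega}=\omega'$. So Lemma \ref{l.id} yields $\alpha=\mathrm{Id}$ only under a hypothesis equivalent to the conclusion. In general $\alpha$ is a fiberwise group automorphism, hence multiplies $\omega'$ by a root of unity; the inversion, for instance, multiplies it by $-1$.

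The local remark via Lemma \ref{l.germ} has the same defect. It shows the two germs are isomorphic, not that the \emph{given} map on the punctured disc extends. Your last paragraph (the identity theorem for $\Phi^*\w{\omega}-\omega'$, and $\Psi=\Psi'\circ\Phi^{-1}$) presupposes $\Phi|_{f'^{-1}(U'')}=\Phi''$. As written you therefore get neither the symplectic isomorphism nor the fact that the isogeny is determined by $\psi$.

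The missing step is the one you set aside in a parenthesis, and it is exactly the paper's proof: $\Phi''$ is bimeromorphic, hence biholomorphic by Lemma \ref{l.Ktrivial}. Your worry that $\Psi'$ has no meromorphic inverse does not matter. You never need to invert $\Psi'$ into $X$, only the map $(\Psi',f')\colon X'\dashrightarrow X\times_C\w{C}$ onto the irreducible component $Z$ containing $X\times_C U''$.

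\begin{itemize}
\item The closure of the graph of $(\Psi',f')$ lies in $X'\times_{\w{C}}Z$.
\item Its projection to $Z$ is proper (because $f'$ is proper), surjective, and biholomorphic over the dense Zariski-open set $Z|_{U''}$. So it is also the graph of a meromorphic map $Z\dashrightarrow X'$.
\item The same holds for $(\Psi,\w{f})$. Hence $\Phi''=(\Psi,\w{f})^{-1}\circ(\Psi',f')$ extends to a bimeromorphic map $X'\dashrightarrow\w{X}$ over $\w{C}$.
\item Your correct observation that a trivial canonical bundle forces relative minimality lets Lemma \ref{l.Ktrivial} apply. The extension is then biholomorphic, and your final paragraph goes through verbatim.
\end{itemize}

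If you prefer to keep the invariants route, you must additionally show two things. First, that $\alpha$ is fiberwise multiplication by a constant root of unity $\epsilon$. Second, that this automorphism extends across the fibers over $\w{C}\setminus U''$, for example via the local Weierstrass form $(x,y)\mapsto(\epsilon^2x,\epsilon^3y)$. That is a genuine extra argument the meromorphic route avoids.
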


\begin{proof}
By the condition (c) in Definition \ref{d.isogeny} (ii), we have a bimeromorphic map between $\w{X}$ and $X'$ over \(C\), which is compatible with  $\w{f}$ and $f'$.
Thus it must be biholomorphic by Lemma \ref{l.Ktrivial}.
\end{proof}

\begin{proof}[Proof of Theorem \ref{t.isogeny}]
Let us prove (i). For a  point
 $y \in \w{C}$  with ${\rm mult}_y \psi = d >1$, we can choose neighborhoods $y \in \w{O} \subset \w{C}$ and $\psi(y) \in O \subset C$ such that $\psi$ is unramified on  $ \w{O} \setminus \{y \}$ and  $\psi|_{\w{O}}\colon \w{O} \to O$ is isomorphic to the cyclic covering of discs $(t \in \Delta) \mapsto (t^d \in \Delta)$.
 For $0 \neq t$, the composition of two isomorphisms $$\w{X}_t \stackrel{\cong}{\longrightarrow} X_{\psi(t)} \stackrel{\cong}{\longleftarrow} \w{X}_{\zeta \cdot t}$$ induced by $\Psi$ determines a bimeromorphic self-map of $S\coloneqq \w{f}^{-1}(\w{O})$ preserving $\w{f}^{-1}(\w{O}) \cap \w{\Sigma}$. Thus it is an automorphism $\sigma$ of $S$ by Lemma \ref{l.Ktrivial}. It preserves $\w{\omega}$ from the property of $\Psi$. Hence, we are in the setting of Theorem \ref{t.auto}. By Lemma \ref{l.isogeny},  the isogeny $\Psi$ restricted to $S$ must be equal to the one described in Proposition \ref{p.descent} and Theorem \ref{t.quotient}.
 Thus the germ of $(f\colon  X \to C, \Sigma)$ at $\psi(y)$ and $d$ must be one from the list in Theorem \ref{t.quotient}, proving (i).

Now let us prove (ii).
For each ramification point $y \in \w{C}$ of $\psi$, the germ of $(f\colon  X \to C, \Sigma)$ at $\psi(y)$ is one of the list in Theorem \ref{t.quotient}. By Corollary \ref{c.realize},   we can find
\begin{itemize}
\item[(a)] a neighborhood $y \in \w{O} \subset \w{C}$ (resp. $\psi(y) \in O \subset C$) such that  $\psi|_{\w{O}}\colon \w{O} \to O$ is a $d$-cyclic covering ramified only at $y$; \item[(b)]
a   symplectic elliptic fibration $(f_{\w{O}}\colon S \to \w{O}, \Sigma_{\w{O}}, \omega_{\w{O}})$;
\item[(c)] an automorphism  $\sigma$ of order $d$ of the symplectic elliptic fibration $(f_{\w{O}}\colon S \to {\w{O}}, \Sigma_{\w{O}}, \omega_{\w{O}})$; and
\item[(d)] a symplectic form $\omega'$ on $f^{-1}(O)$,
\end{itemize}  such that  the symplectic elliptic fibration $f|_{O}$ is the quotient of $f_{\w{O}}$ by the cyclic group generated by $\sigma$ in the sense of Proposition \ref{p.descent} with an induced isogeny $(\psi|_{\w{O}}\colon \w{O} \to O, \Psi_{\w{O}}\colon S \dasharrow f^{-1}(O))$ from  $(f_{\w{O}}\colon S \to {\w{O}}, \Sigma_{\w{O}}, \omega_{\w{O}})$ to $(f|_{O}\colon  (f)^{-1}(O) \to O, \Sigma \cap f^{-1}(O), \omega')$.
Since $\omega|_{f^{-1}(O)} = g(t) \omega'$ for some nowhere vanishing function $g(t)$ on $O$,  we may replace $\omega_{\w{O}}$  by $\Psi_{\w{O}}^*(g(t) \omega_{\w{O}})$ and  assume that $\omega'= \omega|_{f^{-1}(O)}. $
From the construction in Proposition \ref{p.descent}, we have a natural  isomorphism $\gamma$ of elliptic fibrations over the punctured open set $\w{O} \setminus \{y\}$, $$ (f_{\w{O}}\colon S \to \w{O}, \Sigma_{\w{O}}, \omega_{\w{O}})|_{{\w{O}} \setminus \{y\}} \  \stackrel{\gamma}{\cong} \ (\Psi_{\w{O}}|_{O\setminus \{y\}})^* (f\colon  X \to C, \Sigma, \omega)|_{O' \setminus \{ \psi(y)\}}.$$

Let $\{ y_1, y_2, \ldots \}$ be the set of  ramification points of $\psi$.
Then its complement \(U=\w{C}\setminus\{y_1,y_2,\ldots\}\) is a Zariski-open subset.
Choose a neighborhood  $y_i \in \w{O}_i \subset \w{C}$ for each $i$ satisfying (a) - (d) as above such that $\w{O}_i \cap \w{O}_j = \emptyset$ for $i \neq j$.   Write $(f_{\w{O}}\colon S \to \w{O}, \Sigma_{\w{O}}, \omega_{\w{O}})$, \(\Psi_{\w{O}}\), and $\gamma$ described above for each $\w{O} = \w{O}_i$ as $(f_i\colon S_i \to \w{O}_i, \w{\Sigma}_i, \w{\omega}_i)$, \(\Psi_i\), and $\gamma_i$.

Over the Zariski-open subset \(U\), we have the natural \(\psi\)-pullback
\[
(\psi|_U)^*(f\colon X\to C,\Sigma,\omega)
\]
such that its restriction to each \(\w{O}_i\setminus\{y_i\}\) agrees with \((f_i\colon S_i \to \w{O}_i, \w{\Sigma}_i, \w{\omega}_i)\) by the definition of the isogeny \(\Psi_i\).
Then we patch the pullback  $(\psi|_U)^*(f\colon X\to C,\Sigma,\omega)$  with each $(f_i\colon S_i \to \w{O}_i, \w{\Sigma}_i, \w{\omega}_i)$ for every $i$ via the isomorphism $\gamma_i$ to obtain a symplectic elliptic fibration $(\w{f}\colon  \w{X} \to \w{C}, \w{\Sigma}, \w{\omega})$, which obviously satisfy all the required properties in Definition \ref{d.isogeny}.
\end{proof}

We are ready to prove Theorem \ref{t.generic}.



\begin{proof}[Proof of Theorem  \ref{t.generic}]
It suffices to prove that for any isogeny  $( \psi\colon C\to C', \Psi\colon X\dashrightarrow X')$  from $(f\colon X \to C, \Sigma, \omega)$ to a symplectic elliptic fibration $(f'\colon X' \to C', \Sigma', \omega')$, the map $\psi$ has degree 1.   Since \(X'\) is also an elliptic K3 surface by Lemma \ref{l.K3}, we have $C \cong \BP^1 \cong C'$.

Suppose the contrary that \(\deg\psi>1\).
Then \(\psi\) has at least two branch points.
By Theorem \ref{t.isogeny}, the singular fiber over each branch point is  of the types  listed in Theorem \ref{t.quotient}. In particular, the singular fiber over each branch point has at least five irreducible components. It follows from Shioda–Tate Formula (see \cite[Chapter 11, Corollary 3.4]{Hu})  that the Picard number \(\rho(X')\) is at least 10.  
However, this contradicts
\(\rho(X')=\rho(X)\) by \cite[Theorem 12.9]{SS}.
\end{proof}

\section{Symplectic geometry of isotrivial elliptic fibrations}\label{s.isotrivial}
In this section, we study the symplectic structure of isotrivial elliptic fibrations.
To prove Theorem \ref{t.isotrivial}, it suffices to prove the following.

\begin{theorem}\label{t.shrink}
Let $(f\colon S\to \Delta, \Sigma)$ be an isotrivial elliptic fibration over the unit disc, equipped with two symplectic forms $\omega$ and $\widetilde{\omega}$ on $S$.
Then we can find two neighborhoods $0 \in O \subset \Delta$ and $ 0 \in \widetilde{O} \subset \Delta$ such that $(f\colon  S \to \Delta, \Sigma, \omega)|_O$ and $(f\colon S \to \Delta, \Sigma, \w{\omega})|_{\w{O}}$ are isomorphic symplectic elliptic fibrations. \end{theorem}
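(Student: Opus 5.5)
Since $\omega$ and $\w{\omega}$ are both nowhere vanishing sections of $K_S$, we have $\w{\omega} = g\,\omega$ for a nowhere vanishing holomorphic function $g$ on $S$. Because the fibers are compact and connected, $g$ is constant on fibers, so $g = h\circ f$ for a nowhere vanishing holomorphic $h$ on $\Delta$. The plan is to find automorphisms $(\phi,\Phi)$ of the elliptic fibration germ, with $\phi(0)=0$ and $\Phi(\Sigma)=\Sigma$, such that $\Phi^*\w{\omega} = \omega$. I will look for them among the explicit symmetries of the Weierstrass model, in the coordinates of Lemma \ref{l.KS}. Isotriviality gives $J$ constant, and I split into three cases: $J\neq 0,1$, $J=0$ (so $a\equiv 0$), and $J=1$ (so $b\equiv 0$). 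In each case I first normalize the Weierstrass equation by Lemma \ref{l.Weierstrass} (ii), replacing $(a,b)$ by $(g^4a,g^6b)$. This lets me reach a form $y^2=x^3+b(t)$ with $b(t)=t^m$, or $y^2=x^3+a(t)x$ with $a=t^m$, or in the generic case $a=c_1 t^{2m}$, $b=c_2 t^{3m}$ with $m\in\{0,1\}$. Here I take a local coordinate $t$ in which the order of vanishing is a pure monomial, after extracting roots of units, which is possible on a small disc.

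By Lemma \ref{l.KS} (iii), $\omega = h_1(t)\,\frac{dx\wedge dt}{y}$ and $\w{\omega} = h_2(t)\,\frac{dx\wedge dt}{y}$ with $h_1,h_2$ nowhere vanishing. I then consider maps of the form
\[
\Phi(t,x,y)=(\phi(t),\,u(t)^2x,\,u(t)^3y).
\]
Such a map preserves the model $y^2=x^3+a x+b$ exactly when $a(\phi(t))=u^4a(t)$ and $b(\phi(t))=u^6 b(t)$. Under these conditions it pulls back $h_2\,\frac{dx\wedge dt}{y}$ to $h_2(\phi(t))\,\phi'(t)\,u(t)^{-1}\frac{dx\wedge dt}{y}$. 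So I need to solve
\[
h_2(\phi(t))\,\phi'(t)=u(t)\,h_1(t),
\]
subject to the preservation condition.

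Take first the case $b=t^m$ (with $a\equiv 0$). Setting $\phi(t)=t\,e(t)$ with $e(0)\ne 0$, the preservation condition is $e^m = u^6$, so $u=e^{m/6}$ is well defined on a small disc. The symplectic condition becomes a first order ODE for $e$:
\[
h_2(te)\,(e+te')=e^{m/6}h_1(t).
\]
Writing $\phi$ itself as the unknown, this is $h_2(\phi)\,\phi' = (\phi/t)^{m/6}h_1(t)$. Substituting $H_2(s)=\int_0^s h_2$ and $P(t)=\int_0^t h_1(\tau)\tau^{-m/6}\,d\tau$ reduces it to a separable equation, solved by integrating both sides, with the root branch chosen so the integrals make sense. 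The conclusion then follows from the existence of a local biholomorphism $\phi$ with $\phi(0)=0$ solving it; the step to check is that the two sides have the same leading order $t^{1-m/6}$, which is consistent. If $m=0$ I could alternatively use $\phi=\mathrm{id}$ and $u=h_2/h_1$, but then $u^6=1$ is forced, so the ODE route is the uniform one. The other two cases are handled in the same way, with exponent $m/4$ when $J=1$, and with $u^2=(\phi/t)^m$ when $J\ne 0,1$.

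The resulting $\Phi$ is an automorphism of the Weierstrass models over the shrunken discs $O$ and $\w{O}=\phi(O)$, and it preserves the zero section. It lifts to the minimal resolutions, because these are unique, or alternatively by Lemma \ref{l.Ktrivial} after identifying the base through $\phi$. The identity $\Phi^*\w{\omega}=\omega$ holds on an open set, hence everywhere. I expect the main obstacle to be the normalization step: getting $a$ or $b$ into exact monomial form in a suitable coordinate, so that the fractional powers $e^{m/6}$ and $(\phi/t)^{m/6}$ are single valued. It will also require care with the fractional-power integrals, to confirm that the solution $\phi$ is actually holomorphic with $\phi'(0)\neq0$.
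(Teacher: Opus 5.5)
Your proposal is correct and follows essentially the same route as the paper. Like the paper, you normalize the isotrivial Weierstrass equation to a monomial form, look for $\Phi(t,x,y)=(\phi(t),ux,vy)$ with $u,v$ fractional powers of $\phi/t$, solve the resulting separable ODE, and pass to $S$ by Lemma \ref{l.Ktrivial}; the paper only differs in writing $\phi$ as an explicit power series and treating the smooth central fibre separately via $S\cong E\times\Delta$. One point to make explicit: convergence of your integrals $\int_0^t h_1(\tau)\tau^{-m/6}\,d\tau$ needs $m\le 5$ (resp.\ $m\le 3$ when $J\equiv 1$), and this comes from minimality of the Weierstrass model, Lemma \ref{l.KS} (ii).
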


We use the following result on the Weierstrass model of an isotrivial elliptic fibration, which can be checked from \cite[Table VI.1.2]{Mi}.

\begin{lemma}\label{l.isotrivial}
In Lemma \ref{l.KS}, assume that $f\colon  S \to \Delta$ is isotrivial and $f^{-1}(0)$ is singular. Then we can choose the equation $y^2 = x^3 + a(t) x + b(t)$ of its Weierstrass model as follows, depending on the constant value of the functional invariant and the singularity type of $f^{-1}(0)$.
\begin{itemize}
\item[(i)] When the functional invariant is identically 0:
$y^2 = x^3 + t^k$ where $k=1,2,3,4$ or $5$, corresponding to the singularity type \(\textup{II}, \textup{IV},\) \(\textup{I}_0^*\), \(\textup{IV}^*\) or \(\textup{II}^*\), respectively.
\item[(ii)] When the functional invariant is identically 1: $y^2 = x^3 + t^k x$ where   $k=1,2$ or $3$, corresponding to the singularity type  $\textup{III}$, \(\textup{I}_0^*\) or \(\textup{III}^*\), respectively.
\item[(iii)] When the functional invariant is a constant different from 0 or 1: $y^2 = x^3 + a t^2 x + b t^3$ for some nonzero constant $a, b \in \C$ with the singularity type \(\textup{I}_0^*\). \end{itemize} \end{lemma}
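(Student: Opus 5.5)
The plan is to start from the local normal forms of Lemma \ref{l.KS}. Since $f$ is isotrivial, $J$ is a constant $j_0$, and the germ at $0$ is governed by the table \cite[Table VI.1.2]{Mi}, i.e. by the pair $({\rm ord}_0 a, {\rm ord}_0 b)$ subject to the minimality condition of Lemma \ref{l.KS} (ii). I would split into cases according to whether $j_0=0$, $j_0=1$, or $j_0\neq 0,1$, and in each case normalize $(a,b)$ by the two available operations: a holomorphic change of the base coordinate $t\mapsto \phi(t)$ with $\phi(0)=0$, $\phi'(0)\neq 0$, and the rescaling $(a,b)\mapsto(g^4a,g^6b)$ of Lemma \ref{l.Weierstrass} (ii) with $g$ nowhere vanishing.

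\textbf{Case $j_0=0$.} Here $a\equiv 0$ (otherwise $J$ would not be identically $0$), and by minimality $b=t^k\beta(t)$ with $\beta(0)\neq 0$ and $0\le k\le 5$. Write $\beta=g^{-6}$ with $g$ nowhere vanishing, which is possible on a small disc, and replace $b$ by $g^6 b=t^k$. This gives $y^2=x^3+t^k$ with $k=1,\dots,5$ for the types $\textup{II}$, $\textup{IV}$, $\textup{I}_0^*$, $\textup{IV}^*$, $\textup{II}^*$. The same treatment with $b\equiv 0$ and $a=t^k\alpha(t)$ gives $y^2=x^3+t^kx$ with $k=1,2,3$ for $j_0=1$, matching $\textup{III}$, $\textup{I}_0^*$, $\textup{III}^*$. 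Only the rescaling is needed in these two cases, together with shrinking $\Delta$ so that the roots of $\beta$ or $\alpha$ exist.

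\textbf{Case $j_0\neq 0,1$.} Constancy of $J$ by Lemma \ref{l.KS} (i) means $a^3/b^2\equiv c$ for a constant $c\neq 0$. Set $m={\rm ord}_0 a$ and $n={\rm ord}_0 b$. Then $3m=2n$, so $m=2r$ and $n=3r$, and minimality forces $r\in\{0,1\}$. The quotient $a/b\cdot t^{r}$ is a unit; call it $u$. Putting $g=u^{1/2}$, the relation $a^3=cb^2$ gives
\[
g^4a=t^{2r}\cdot(\text{const}),\qquad g^6b=t^{3r}\cdot(\text{const}).
\]
Concretely, $a^3=cb^2$ makes $a=c\,(b/a)^2$, and one checks that $g^4a$ and $g^6b$ become constants times $t^{2r}$ and $t^{3r}$. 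The case $r=0$ is the smooth case, excluded by hypothesis. The case $r=1$ gives $y^2=x^3+at^2x+bt^3$, of type $\textup{I}_0^*$.

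Finally, the identification of singular types in each case comes from reading off the orders in the table. The only step requiring care is the $j_0\neq 0,1$ case, where one must verify that a single unit $g$ normalizes $a$ and $b$ simultaneously. That is exactly what the constancy of $a^3/b^2$ guarantees, so I expect this step to be the main (if modest) obstacle.
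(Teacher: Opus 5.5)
Your proposal is correct. It is somewhat more self-contained than the paper's argument.

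The paper does no computation here. It reads the normal forms directly off \cite[Table VI.1.2]{Mi}, whose isotrivial entries (multiplicity $\infty$, with the convention $t^\infty=0$) are exactly the listed equations. So it relies on the tabulated normal forms for each germ class, in the sense of Lemma \ref{l.germ}.

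You use the table only to turn $({\rm ord}_0 a,{\rm ord}_0 b)$ into Kodaira types. You then derive the normal forms yourself from the rescaling $(a,b)\mapsto(g^4a,g^6b)$. This shows that no reparametrization of the base is needed; the base-coordinate change you list as an available operation is never used.

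The step you expected to be the main obstacle is a one-line identity. With $g^2=(a/b)\,t$ (a unit near $0$) and $a^3=cb^2$, you get $g^4a=(a^3/b^2)t^2=ct^2$ and $g^6b=(a^3/b^2)t^3=ct^3$. So you may even take $a=b=c$ in the normal form.

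Two small points should be stated explicitly:
\begin{itemize}
\item $k=0$ (resp.\ $r=0$) is excluded because the discriminant is then nonzero at $0$, so the fiber is smooth.
\item $j_0=\infty$ cannot occur because $4a^3+27b^2\not\equiv 0$. This is what makes $c=27j_0/(4(1-j_0))$ finite and nonzero.
\end{itemize}

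Your shrinking of $\Delta$ is genuinely needed, since $a$ or $b$ may vanish elsewhere in $\Delta$. The lemma is really a statement about the germ at $0$, which is how it is used in Theorem \ref{t.shrink}.
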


\begin{proof}[Proof of Theorem \ref{t.shrink}]
First, assume that $f^{-1}(0)$ is smooth. Then we can assume that $S$ is biholomorphic to $E \times \Delta$ for a fixed elliptic curve $E$. Choose a uniformizing coordinate $z$ on $E$. Then we may assume that $\w{\omega} = {\rm d} z \wedge {\rm d} t$ and $\omega = g(t) \w{\omega}$ for a nowhere vanishing holomorphic function $g(t)$ on $\Delta$.
Choose a biholomorphic map $\phi\colon O \to \w{O}$ with $\phi(0) = 0$ such that the holomorphic function $\phi(t)$ in the coordinate $t$ satisfies $\frac{{\rm d} \phi(t)}{{\rm d} t} = g(t)$.
Then $\Phi\colon (t, z) \mapsto (\phi(t), z)$ satisfies $\Phi^* \w{\omega} = \omega$ over $O$.

Now assume that $f^{-1}(0)$ is singular.
The symplectic forms $\omega$ and $\w{\omega}$ induce symplectic forms on the smooth locus of the  Weierstrass model $\overline{f}\colon  \overline{S} \to \Delta$, which we denote by the same symbols $\omega$ and $\w{\omega}$.

By Lemma \ref{l.Ktrivial}, it suffices to find a biholomorphic map  $\phi\colon O \to \w{O}$ and a bimeromorphic map $\Phi\colon (\overline{f})^{-1}(O) \dasharrow (\overline{f})^{-1}(\w{O})$ of the Weierstrass model, which is biholomorphic outside the central fiber $(\overline{f})^{-1}(0),$ such that $$\overline{f} \circ \Phi = \phi \circ \overline{f} \mbox{ and } \Phi^* \w{\omega} = \omega$$
hold on $(\overline{f})^{-1}(O)$.
To do that, we may assume that the Weierstrass equation is of the form listed in
 Lemma \ref{l.isotrivial}. Furthermore, we can assume that  $\widetilde{\omega}$ is the restriction of $\frac{ {\rm d}x \wedge {\rm d} t}{y}$
  from Lemma \ref{l.KS} and $\omega = g(t) \w{\omega}$   for a nowhere vanishing holomorphic function $g(t)$ on $\Delta$.
  Let  $g(t) = \sum_{i=0}^{\infty} g_i t^i, g_0 \neq 0$ be the power series expression in the coordinate $t$.

For each case of Lemma \ref{l.isotrivial}, we construct below a holomorphic map of the form  $$\Phi\colon (t, x, y) \mapsto (\phi(t), u(t) x, v(t) y)$$ for suitable holomorphic functions $\phi(t), u(t), v(t)$ in the coordinate $t$ in a neighborhood of $t=0$ such that
\begin{itemize}
\item the map $\Phi$ preserves the surface defined by the Weierstrass equation, and
\item it satisfies
$$\Phi^* \w{\omega} = \frac{u(t)}{v(t)} \frac{{\rm d} \phi(t)}{{\rm d} t} \frac{{\rm d}x \wedge {\rm d} t}{y}  =  g(t) \w{\omega}  = g(t) \frac{{\rm d}x \wedge {\rm d} t}{y},$$ which is equivalent to
\begin{equation}\label{eq.phi} u(t) \cdot \frac{{\rm d} \phi(t)}{{\rm d} t} = v(t) \cdot g(t).
\end{equation}
\end{itemize} In the argument below, a fractional power of a nowhere vanishing holomorphic function stands for a choice of one of the branches of the fractional power.

\begin{itemize}
\item[(i)]
The case when the Weierstrass equation is $y^2 = x^3 + t^k, 1 \leq k \leq 5$.
Let us choose  $u(t) = (\frac{\phi(t)}{t})^{k/3}$ and $v(t) = (\frac{\phi(t)}{t})^{k/2}$ with
\[
\phi(t)= t\cdot (\frac{6-k}{6})^{\frac{6}{6-k}}\left(\sum_{i=0}^\infty\frac{6}{6i-k+6}g_i t^i\right)^{\frac{6}{6-k}}.
\]
Note that the power series on the right hand side converges in a neighborhood of $0 \in \Delta$ because so does $\sum_{i=0}^{\infty} g_i t^i$. Then $\Phi$ preserves the surface $y^2 = x^3 + t^k$ in $\BP^2 \times \Delta$ and satisfies (\ref{eq.phi}).
\item[(ii)]
The case when the Weierstrass equation is $y^2 = x^3 + t^k x, 1 \leq k \leq 3$.
Let us choose $u (t)=(\frac{\phi(t)}{t})^{k/2}$ and $v(t)=(\frac{\phi(t)}{t})^{3k/4}$ with
$$ \phi(t)=t\cdot (\frac{4-k}{4})^{\frac{4}{4-k}}
\left(\sum_{i=0}^\infty\frac{4}{4i-k+4}g_it^i\right)^{\frac{4}{4-k}}.$$
Note that the power series on the right hand side converges in a neighborhood of $0 \in \Delta$ because so does $\sum_{i=0}^{\infty} g_i t^i$. Then $\Phi$ preserves the surface $y^2 = x^3 + t^k x$ in $\BP^2 \times \Delta$ and satisfies (\ref{eq.phi}).
\item[(iii)]
The case when the Weierstrass equation is $y^2 = x^3 + a t^2 x + b t^3.$
Let us choose $u(t)=\frac{\phi(t)}{t}$ and $v(t)=(\frac{\phi(t)}{t})^{3/2}$ with
$$\phi(t) =\frac{t}{4}\left(\sum_{i=0}^\infty\frac{2}{2i+1}g_it^i\right)^{2}.$$ Note that the power series on the right hand side converges in a neighborhood of $0 \in \Delta$ because so does $\sum_{i=0}^{\infty} g_i t^i$. Then $\Phi$ preserves the surface $y^2 = x^3 + a t^2 x + b t^3$ in $\BP^2 \times \Delta$ and satisfies (\ref{eq.phi}).
\end{itemize}
This finishes the proof of Theorem \ref{t.shrink}.
\end{proof}

\bigskip
Jun-Muk Hwang (jmhwang@ibs.re.kr)

Center for Complex Geometry,
Institute for Basic Science (IBS),
Daejeon 34126, Republic of Korea.

Guolei Zhong (glzhong@math.ecnu.edu.cn, guolei@ibs.re.kr)

Current address: School of Mathematical Sciences, Key Laboratory of MEA(Ministry of Education) \& Shanghai Key Laboratory of PMMP, East China Normal University, Shanghai 200241, China.

Center for Complex Geometry,
Institute for Basic Science (IBS),
Daejeon 34126, Republic of Korea.

\medskip

\end{document}